\newtheorem{theorem}{Theorem}[subsection]
\newtheorem{lemma}[theorem]{Lemma}
\newtheorem{cor}[theorem]{Corollary}
\newtheorem{proposition}[theorem]{Proposition}
\theoremstyle{remark}
\newtheorem{remark}[theorem]{Remark}
\numberwithin{equation}{subsection}
\newcommand{\Z}{\ensuremath{\mathbb{Z}}}
\newcommand{\R}{\ensuremath{\mathbb{R}}}
\def\ie{\emph{i.e.}}
\gdef\mnote#1{\marginpar{\footnotesize
 \tolerance\@M\spaceskip2.6\p@ plus10\p@ minus.9\p@\rm#1}}}
\def\Dg:{\endgraf{\bf Dg:\enspace}\ignorespaces}
\let\Bbb\mathbb
\let\Cal\mathcal
\def\LC#1#2{\operatorname{LC}^{#1}_{#2}}
\def\QC#1#2{\operatorname{QC}^{#1}_{#2}}
\def\QD#1{\operatorname{Q\D}^{#1}}
\def\idx#1#2{d_{#1,#2}}
\def\D{\Delta}
\def\G{\Gamma}
\def\P{\Cal P}
\def\s{\sigma}
\def\sm{\smallsetminus}
\def\XYZ{\operatorname{XYZ}}
\def\Cr#1#2#3{\operatorname{Cr}_{#1#2#3}}
\newcommand{\be}{\begin{equation}}
\newcommand{\ee}{\end{equation}}
\let\ge\geqslant 
\let\le\leqslant 
\def\Z{\Bbb Z}
\def\R{\Bbb R}
\def\DD{\Bbb D}
\def\Rp#1{\Bbb{RP}^{#1}}
\def\Aut{\operatorname{Aut}}
\newcommand{\addresseshere}{%
  \enddoc@text\let\enddoc@text\relax
}
\begin{document}

\renewcommand{\arraystretch}{1.2}
\title[Deformation Classification of Typical Configurations of 7 Points]
{Deformation Classification of Typical Configurations of 7 Points in the Real Projective Plane}
\author[S.~Finashin R.A.~Zabun]{Sergey Finashin and Remz\.{i}ye Arzu Zabun }
\address{
 Department of Mathematics, Middle East Tech. University\\
 06800 Ankara Turkey}
\address{
 Department of Mathematics, Gaziantep University\\
 27310 Gaziantep Turkey}

\subjclass[2010]{Primary:14P25. Secondary: 14N20.}

\keywords{Typical configurations of 7 points, deformations, real Aronhold sets}
\date{}
\dedicatory{}

\begin{abstract}
A configuration of $7$ points in $\Rp2$ is called {\it typical} if it has no collinear triples and no coconic sextuples of points.
We show that there exist $14$ deformation classes of such configurations. This yields classification of real Aronhold sets.
\end{abstract}

\maketitle

\rightline
{\vbox{\hsize80mm
\noindent\baselineskip10pt{\it\footnotesize
``This is one of the ways in which the magical number seven has persecuted me.''
\vskip3mm
\noindent
\rm
George A. Miller, \it
The magical number seven, plus or minus two:
some limits of our capacity for processing information
}}}
\vskip5mm

\section{Introduction}
\subsection{Simple configurations of $n\le7$ points}
Projective configurations of points on the plane is a classical subject in algebraic geometry and
its history in the context of linear systems of curves can be traced back
to 18th century (G.\,Cramer, L.\,Euler, etc.).
In modern times, projective configurations are studied both from
algebro-geometric viewpoint (Geometric Invariant Theory, Hilbert schemes, del Pezzo surfaces),
and from combinatorial geometric viewpoint (Matroid Theory).
In the latter approach just linear phenomena are essential,
and in particular,
a generic object of consideration is a {\em simple $n$-configuration}, that is
a set of $n$ points in $\Rp2$ in which no triple of points is collinear.
The dual object is {\em a simple $n$-arrangement}, that is a set of $n$ real lines
containing no concurrent triples.

A combinatorial characterization of a simple $n$-arrangement is its {\it oriented matroid}, which
is roughly speaking a description of the mutual position of its partition polygons. For simple n-configurations it is essentially a
description how do the plane lines
separate the configuration points (see \cite{BLVSWZ} for precise definitions).
Such a combinatorial description was given for simple $n$-arrangements with $n\le7$ in \cite{C} and \cite{W}.
In the beginning of 1980s N.\,Mn\"ev proved his universality theorem and in particular,
constructed examples of combinatorially equivalent
simple configurations which cannot be connected by a deformation.
His initial example with $n\ge19$ was improved by P.\,Suvorov (1988) to $n=14$,
and recently (2013) by Y.Tsukamoto to $n=13$.
Mn\"ev's work motivated the first author to verify in \cite{F1} (see also \cite{F2})
that for $n\le7$ the deformation classification still coincides with the combinatorial one,
or in the other words, to prove connectedness of the realization spaces of the corresponding oriented matroids.
One of applications of this in Low-dimensional topology was found in \cite{KK}, via the link to the
geometry of Campedelli surfaces.

As $n$ grows,
a combinatorial classification of simple $n$-configurations
becomes a task for computer enumeration: there exist 135 combinatorial types of
simple $8$-arrangements (R.\,Canham, E.\,Halsey, 1971, J.\,Goodman and R.\,Pollack, 1980) and 4381 types of simple $9$-arrangements
(J.\,Richter-Gebert, G.\,Gonzales-Springer and G.\,Laffaille, 1989).
The classification includes
analysis of arrangements of {\it pseudolines} (oriented matroids of rank $3$),
their {\it stretchability} (realizability by lines) and
analysis of connectedness of the realization space
of a matroid that gives a deformation classification
(see \cite[Ch.\ 8]{BLVSWZ} for more details).

In what follows, we need only the following summary of
the deformation classification of simple $n$-configurations for $n\le7$.
For $n=5$ it is trivial: simple $5$-configurations form a single deformation component, denoted by
$\LC5{}$.
This is because the points of such a configuration lie on a non-singular conic.
\begin{figure}[h!]
    \centering
    \subfigure{\scalebox{0.43}{\includegraphics{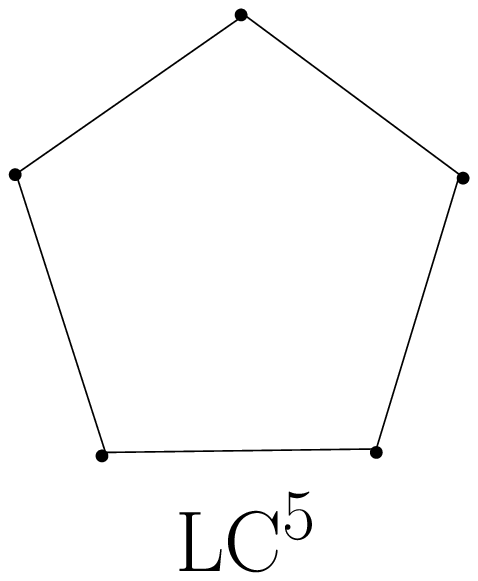}}}\hspace{0.4cm}
    \subfigure[normalsize][cyclic]{\scalebox{0.41}{\includegraphics{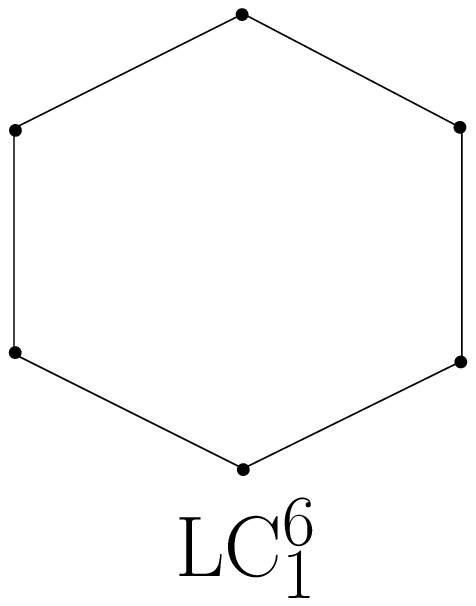}}}\hspace{0.4cm}
    \subfigure[normalsize][bicomponent]{\scalebox{0.41}{\includegraphics{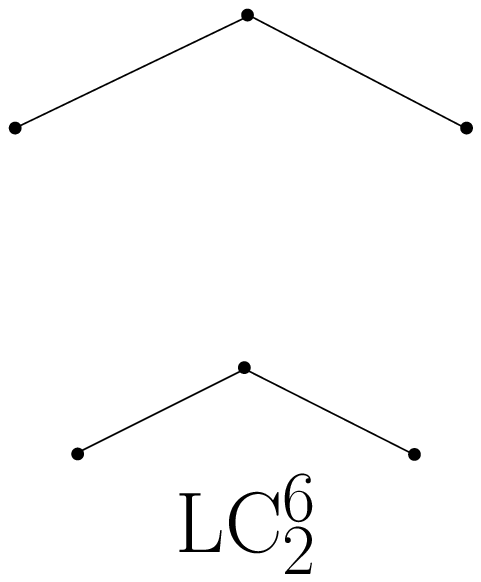}}}\hspace{0.4cm}
    \subfigure[normalsize][tricomponent]{\scalebox{0.41}{\includegraphics{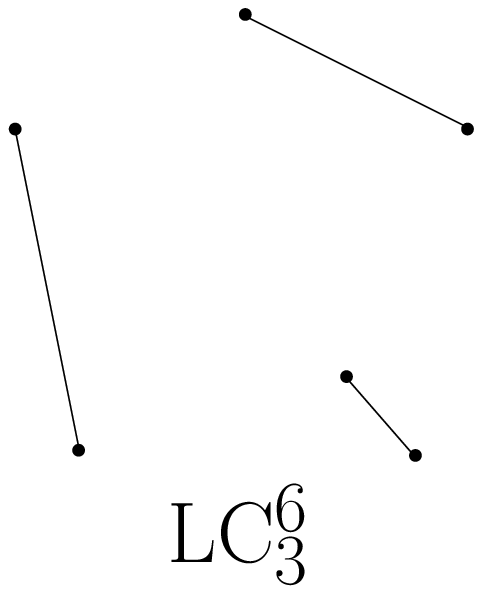}}}\hspace{0.4cm}
	  \subfigure[normalsize][icosahedral]{\scalebox{0.41}{\includegraphics{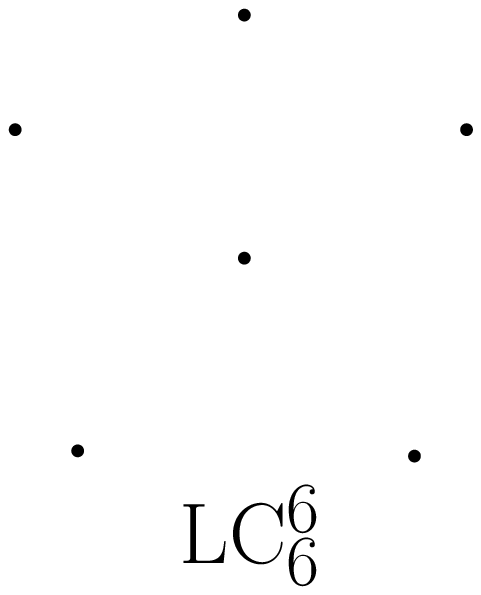}}}
    \caption{Adjacency graphs $\G_\P$ of $5$- and $6$-configurations (cyclic, bicomponent, tricomponent and icosahedral)}
		\label{graphLG56}
  \end{figure}
For $n=6$ there are $4$ deformation classes
shown on
Figure~\ref{graphLG56}.
On this Figure, we sketched configurations $\P$ together with some edges (line segments)
joining pairs of points, $p,q\in\P$. Namely, we sketch
such an edge if and only if it is not crossed by any of the lines connecting
pairs of the remaining $n-2$ points of $\P$.
 The graph, $\G_\P$, that we obtain for a given configuration $\P$
will be called the {\em adjacency graph} of $\P$ (in the context of the oriented matroids, there is
a similar notion of {\it inseparability graphs}).
For $n=6$, the number of its connected components, $1$, $2$, $3$, or $6$,
characterizes $\P$ up to deformation.
The deformation classes of 6-configurations with $i$ components are
denoted $\LC6i$, $i=1,2,3,6$, and the configurations of these four classes are called respectively
{\it cyclic, bicomponent, tricomponent}, and {\it icosahedral} 6-configurations.

Given a simple 7-configuration $\P$, we label a point $p\in\P$ with
an index $\delta=\delta(p)\in\{1,2,3,6\}$
if $\P\sm\{p\}\in \LC6{\delta}$.
Count of the labels gives a quadruple $\s=(\s_1,\s_2,\s_3,\s_6)$, where $\s_k\ge0$ is the number
of points $p\in\P$ with $\delta(p)=k$. We call $\s=\s(\P)$ the {\em derivative code} of $\P$.
 There exist 11 deformation classes of simple 7-configurations
 that are shown on Figure \ref{graphLG7},
together with their adjacency graphs and labels $\delta(p)$.
\begin{figure}[h!]
  \centering
    \subfigure[$\LC{7}{(7,0,0,0)}$]{\scalebox{0.4}{\includegraphics{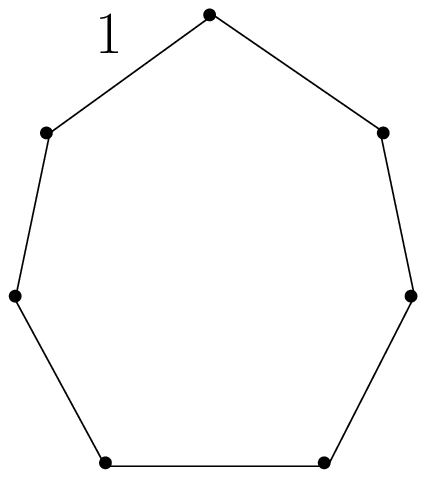}}}\hspace{0.65cm}
    \subfigure[$\LC{7}{(3,4,0,0)}$]{\scalebox{0.4}{\includegraphics{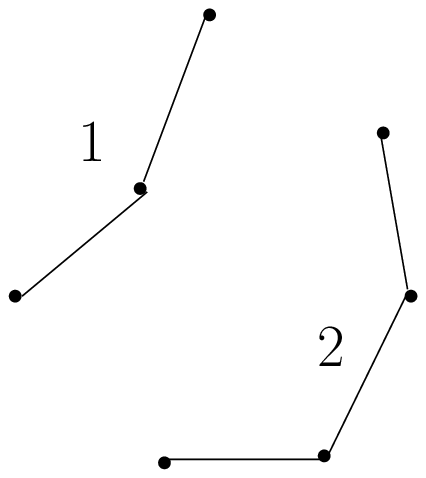}}}\hspace{0.65cm}
    \subfigure[$\LC{7}{(2,2,3,0)}$]{\scalebox{0.4}{\includegraphics{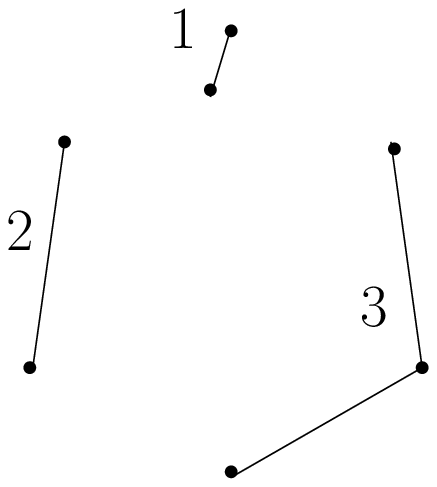}}}\hspace{0.65cm}
    \subfigure[$\LC{7}{(1,2,2,2)}$]{\scalebox{0.4}{\includegraphics{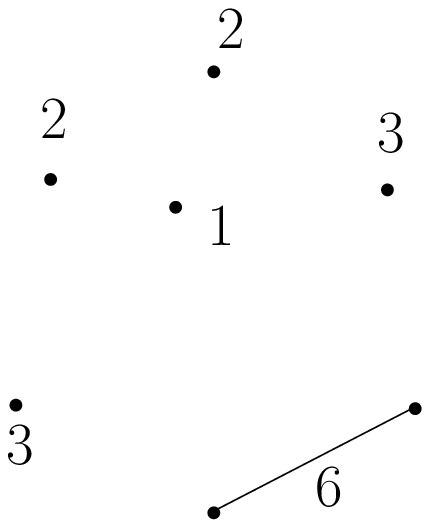}}}\\
		\vspace{0.2cm}
    \subfigure[$\LC{7}{(1,0,6,0)}$]{\scalebox{0.4}{\includegraphics{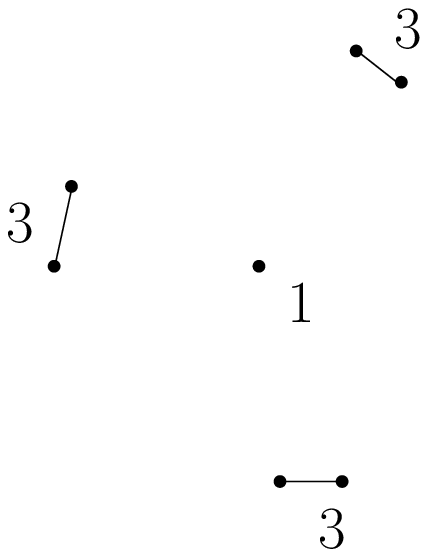}}}\hspace{0.65cm}
    \subfigure[$\LC{7}{(1,6,0,0)}$]{\scalebox{0.4}{\includegraphics{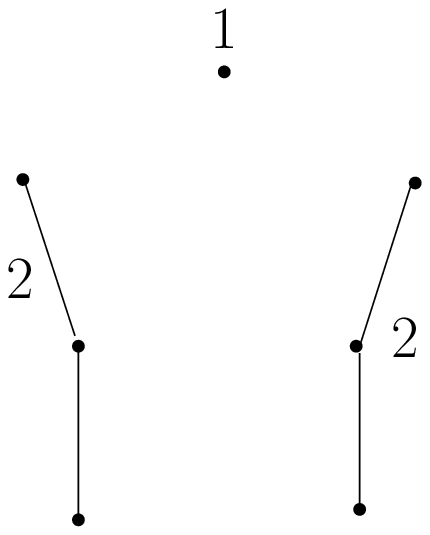}}}\hspace{0.65cm}
    \subfigure[$\LC{7}{(1,4,2,0)}$]{\scalebox{0.4}{\includegraphics{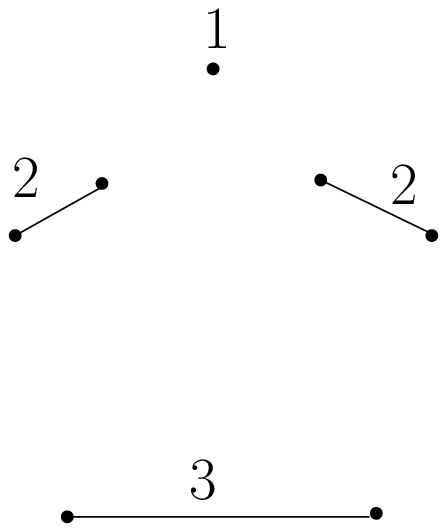}}}\hspace{0.65cm}
		\subfigure[$\LC{7}{(1,2,4,0)}$]{\scalebox{0.4}{\includegraphics{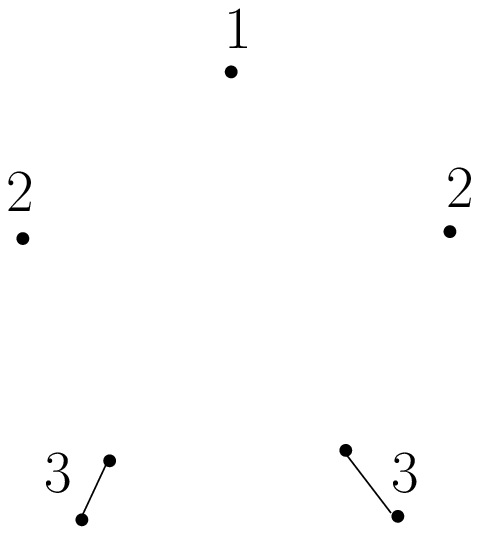}}}\\
		\vspace{0.2cm}
    \subfigure[$\LC{7}{(0,4,3,0)}$]{\scalebox{0.4}{\includegraphics{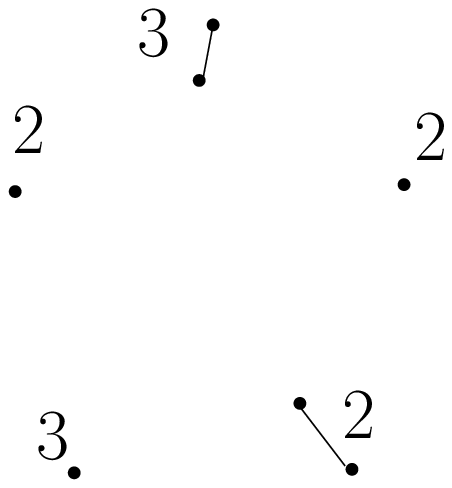}}}\hspace{0.65cm}
 		\subfigure[$\LC{7}{(0,6,1,0)}$]{\scalebox{0.4}{\includegraphics{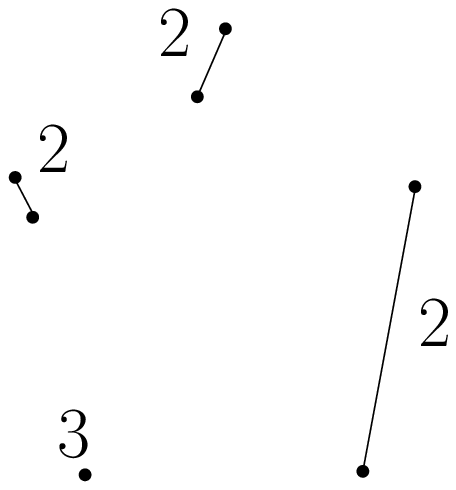}}}\hspace{0.65cm}
		\subfigure[$\LC{7}{(0,3,3,1)}$]{\scalebox{0.35}{\includegraphics{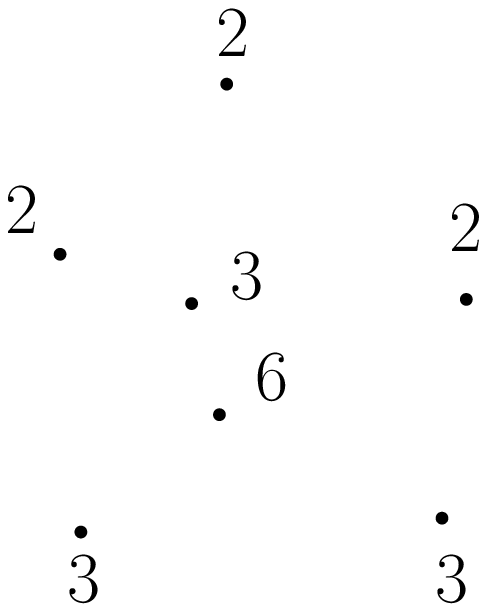}}}
  \caption{Deformation classes of simple 7-configurations}
	\label{graphLG7}
\end{figure}

It is trivial to notice that if $p,q\in\P$ are adjacent vertices in graph $\G_{\P}$,
then $\delta(p)=\delta(q)$,
so, on Figure \ref{graphLG7} we label whole components of $\G_{\P}$ rather than its vertices.
The derivative codes happen to distinguish the deformation classes,
and we denote by $\LC7{\s}$ the class formed by simple $7$-configurations $\P$
with the derivative code $\s$.

\subsection{Typical configurations} Problems related to the linear systems (pencils and nets) of real cubic curves along with related problems
on the real del Pezzo surfaces of degrees 1, 2 and 3 lead to a necessity to refine
the notion  of simple configurations by taking into account also
{\it quadratic degenerations} of configurations, in which six points become {\it coconic} (lying on one conic).
These problems involve also Real Aronhold sets of bitangents to quartics, see Section \ref{concluding}.
 It is noteworthy that a similar motivation (interest to Aronhold sets) was indicated by L.\,Cummings, although in her research
\cite{C} she did not step beyond simple 7-configurations.

The object of our interest is not that well-studied as simple configurations, although
definitely is not new. It appears for example in \cite{GH} in the context of studying Cayley octads
and their relation to the Aronhold sets.
We adopt here the terminology from \cite{GH} and say that
an $n$-configuration is \textit{typical}, if it is simple
and in addition does not contain coconic sextuples of points.
Analyzing the combinatorics of the root system $E_7$ related to
the del Pezzo surfaces associated with typical 7-configurations,
J.~Sekiguchi \cite{Se}
found 14 types of such configurations (these types give some kind of a combinatorial classification).
Later in a joint work with T.\,Fukui (in 1998), he presented a similar computer-assisted enumeration
for typical 8-configurations by analysis of the root system $E_8$.
In a different form,
in terms of separation of configuration points by conics,
a description of typical 7-configurations was given by S.~Le Touz\'e \cite{S3} in the context of studying
the real rational pencils of cubics.
At the same time, a similar combinatorial description of typical 7-configurations was given in
\cite{Z}.
The principal goal in \cite{Z} (and in the current presentation of its results)
is however to give
a more subtle {\it deformation classification} of such configurations, that is to
show that the realization space for each of the 14 combinatorial types of typical $7$-configuration is connected.
 Like deformation classification of simple configurations, this result leads to a deformation classification of certain
 associated real algebro-geometric objects. In the case of typical $7$-configurations such objects are
real del Pezzo surfaces of degree 2 (marked with $7$ exceptional curves), nets of cubics in $\mathbb{RP}^2$, Cayley octads
and nets of quadrics in $\mathbb{RP}^3$. We tried to indicate one of such applications in the end of the paper.

For us, a {\it deformation of $n$-configuration} is simply a path in the corresponding configuration space, or in the other words,
a continuous family $\P_t$, $t\in[0,1]$, formed by $n$-configurations.
We call it {\it L-deformation} if $\P_t$ are simple configurations, and {\it Q-deformation} if $\P_t$
are typical ones.

It is not difficult to observe (see Section 2) that for $6$-configurations the two classifications
coincide: typical 6-configurations can be connected by
an L-deformation if and only if they can be connected by a Q-deformation.
However, for $n>6$, one L-deformation class may contain several Q-deformation classes, and our main goal
is to find their number
in the case of $n=7$, for
each of the 11 L-deformation classes shown on Figure \ref{graphLG7}.

\begin{theorem}\label{14classes} Typical $7$-configurations split into $14$ Q-deformation classes.
Among these classes, two are contained in the class $LC^{7}_{(3,4,0,0)}$, three in the class $LC^{7}_{(2,2,3,0)}$,
and each of the remaining $9$ L-deformation classes of simple $7$-configurations contains just one Q-deformation class.
\end{theorem}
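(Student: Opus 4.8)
The plan is to refine the analysis that produced the L-deformation classification of Figure~\ref{graphLG7} by introducing a \emph{conic-separation invariant} for each typical configuration and tracking it along Q-deformations. The starting point is the remark, established for 6-configurations, that the L- and Q-classifications agree there; concretely, deleting a point from a typical 7-configuration $\P$ gives a typical 6-configuration, and the derivative code $\s(\P)$ of the ambient simple structure is already known to be a complete L-invariant. So for each of the $11$ codes $\s$ on Figure~\ref{graphLG7} I would fix a model configuration and enumerate, combinatorially, how many essentially different positions a conic through six of the seven points can occupy relative to the seventh; more precisely, for each sextuple $\P\sm\{p\}$ I record on which side of its (unique) circumscribed conic the point $p$ lies, and more globally I record the mutual arrangement of the seven conics $C_p$ (the conic through $\P\sm\{p\}$) together with the seven points. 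This data is a discrete invariant of a Q-deformation class, and the claim is that it takes exactly $2$ values on $LC^7_{(3,4,0,0)}$, exactly $3$ on $LC^7_{(2,2,3,0)}$, and exactly $1$ on each of the other nine codes.

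The proof then has the usual two halves. First, the \textbf{distinguishing half}: show the invariant is locally constant along Q-deformations (the conics $C_p$ vary continuously and cannot degenerate or pass through a configuration point while $\P_t$ stays typical, so the sign data is preserved), and exhibit explicit typical configurations realizing the asserted number of values of the invariant inside each L-class — two models in $LC^7_{(3,4,0,0)}$, three in $LC^7_{(2,2,3,0)}$. This part is finite and essentially a matter of writing down coordinates, or better, of reading the conic-separation types off the combinatorial descriptions of \cite{Se}, \cite{S3}, \cite{Z} and checking they are pairwise non-isotopic through typical configurations. Second, the \textbf{connectedness half}: show that any two typical 7-configurations with the same code $\s$ and the same conic-separation invariant are joined by a Q-deformation. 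The natural strategy is to take an L-deformation $\P_t$ connecting them (which exists by the known L-classification) and repair it: whenever $\P_t$ ceases to be typical, some sextuple becomes coconic, i.e.\ one of the conics $C_p$ passes through the point $p$; near such a wall one performs a local modification of the path that pushes $p$ to the correct side of $C_p$, the side dictated by the shared invariant, thereby crossing the wall transversally in the ``allowed'' direction or avoiding it altogether. Carrying this out requires knowing the wall structure in configuration space: the complement of the typical locus inside the simple locus is a hypersurface (the coconic-sextuple discriminant), and one must understand its local and global geometry well enough to see that the Q-class is determined by the code plus the separation invariant.

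The main obstacle will be precisely this connectedness half, and within it the case analysis for the two exceptional L-classes $LC^7_{(3,4,0,0)}$ and $LC^7_{(2,2,3,0)}$, where the coconic discriminant genuinely disconnects the stratum and one must prove that it does so into exactly the claimed number of pieces — neither more (which requires producing enough Q-deformations, i.e.\ enough repairable L-paths) nor fewer (which requires the invariant to actually separate them). I expect the argument to proceed by a careful bookkeeping of which sextuples can become coconic within a given code: the structure of the adjacency graph $\G_\P$ and the labels $\delta(p)$ constrain which of the seven conics $C_p$ are ``mobile,'' and for the nine non-exceptional codes one shows that every wall can be crossed back and forth (so the Q-class is unique), while for the two exceptional codes one identifies the specific wall(s) that cannot be undone without leaving the code. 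A convenient technical device throughout is to pass to the dual picture and to the associated del Pezzo surface of degree~$2$, where a coconic sextuple corresponds to a $(-1)$-curve meeting configuration data in a controlled way; but I would keep the core of the argument elementary and planar, using the conic-separation invariant as the organizing principle and deferring the surface-theoretic reformulation to the applications in Section~\ref{concluding}.
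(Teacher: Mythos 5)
Your plan has two genuine problems. First, the invariant you propose is ill-defined as stated: for a \emph{typical} configuration there is no conic through the six points $\P\sm\{p\}$ --- a conic is determined by five points, and the absence of coconic sextuples is precisely the definition of typicality. The discrete data that actually do the work are the conics through \emph{quintuples}: for each pair $p,q\in\P$ one takes the conic $Q_{p,q}$ through the remaining five points and records whether $p$ and $q$ lie inside or outside it. Likewise the coconic wall is crossed when such a five-point conic passes through a sixth point, not when ``$C_p$ passes through $p$'' (which would mean all seven points coconic). This is not merely notational: the combinatorics of the quintuple conics (the dominance indices $d(p)$, the alternation forced by Lemma~\ref{sd} via pencils of conics through four points, and the B\'ezout constraints of Lemma~\ref{indexcalculation}) is what pins down the number of values of the invariant, e.g.\ the canonical sequence $6,1,4,3,2,5,0$ in the heptagonal case.

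Second, and more seriously, the connectedness half --- which you correctly identify as the main obstacle --- is left as a program rather than an argument. ``Repairing'' an L-deformation near the coconic discriminant requires exactly the global understanding of that hypersurface which the theorem is about, so as written the plan is circular at its hardest point. The paper avoids wall-crossing altogether: every class with $\s_1>0$ is presented as $\P=\P_0\cup\{p_0\}$ with $\P_0$ a cyclic $6$-configuration, the L-polygon containing $p_0$ is subdivided into Q-regions by the six conics through quintuples of $\P_0$, and connectedness follows from transitivity of the monodromy $\Aut_Q(\P_0)\cong\DD_3$ on the non-collapsible Q-regions of each type; the count of orbit types is what yields exactly $2$ classes in $\LC7{(3,4,0,0)}$ and $3$ in $\LC7{(2,2,3,0)}$. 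The two collapsible regions ($E$ and $F$) need separate arguments, carried out by exhibiting the relevant configuration spaces as fibrations over contractible spaces of conics (Lemmas~\ref{connect} and~\ref{F-connect}), and the three pentagonal classes with $\s_1=0$ are simply disjoint from the discriminant. You would need to supply some equivalent of this machinery --- in particular a substitute for the transitive monodromy action on non-collapsible Q-regions --- before the proposal becomes a proof.
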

How to subdivide L-deformation classes $LC^{7}_{(3,4,0,0)}$ and $LC^{7}_{(2,2,3,0)}$
into 2 and respectively 3 Q-deformation classes
is shown in Subsections~\ref{non-collabsible} and,
 \ref{decoadj}.

\subsection{Structure of the paper}
In Section~\ref{preliminaries} we recall
the scheme of L-deformation classification from \cite{F1} and
give Q-deformation classification of typical $6$-configurations.
We treat also three cases of $7$-configurations, in which 
connectedness of
the realization spaces is obvious.
Sections 3--5 are devoted to Q-deformation classification for 
the three
existing types of 7-configurations:  heptagonal, hexagonal, and pentagonal.
In the last Section, we discuss some applications including a description of
the 14 real Aronhold sets (Figure~\ref{CAC7graph}). We indicated how this description can
be derived from our results using Cremona transformations. We sketched also an application: a method
(alternative to that of \cite{S1}) to
describe the topology of real rational cubics passing through the points of a 7-configuration.

\subsection{Acknowledgments}
This paper is essentially based on \cite{Z}, which was partially motivated by
our attempts to understand and develop the results of \cite{S1}.
%
\section{Preliminaries}\label{preliminaries}
\subsection{The monodromy group of a configuration}\label{monodromy}
By the {\it L-deformation monodromy group} of a simple $n$-configuration $\P$ we mean the subgroup, $\Aut_L(\P)$,
of the permutation group $S(\P)$ realized by L-deformations, that is
 the image in $S(\P)$ of the fundamental group of the L-deformation component of $\P$
(using some fixed numeration of points of $\P$, we can and will
identify $S(\P)$ with the symmetric group $S_n$).
For a typical $n$-configuration $\P$, we similarly define the {\it Q-deformation monodromy group}
$\Aut_Q(\P)\subset S(\P)\cong S_n$ formed by the permutations realized by Q-deformations.

In the case $n=4$, any permutation can be realized by a deformation (and in fact, by a projective
transformation), so we have $\Aut_L(\P)=\Aut_Q(\P)=S_4$.
For $n=5$, we obtain the dihedral group $\Aut_L(\P)=\Aut_Q(\P)=\DD_5$ associated to the pentagon $\G_\P$ (as it was noted
the adjacency graph is an L-deformation invariant).

More generally, we can consider a class of simple {\it $n$-gonal $n$-configurations}, $\P$,
that are defined as ones forming a convex $n$-gon in the complement $\R^2=\Rp2\sm \ell$, of some line $\ell\subset\Rp2\sm\P$.
For $n\ge5$ this $n$-gon (that coincides with the adjacency graph $\G_\P$)
is preserved by the monodromy group action, and it is easy to conclude that $\Aut_L(\P)=\DD_n$.
In particular, $\Aut_L(\P)=\DD_6$ for $\P\in\LC61$.

\begin{remark}
It is also not difficult to show (see \cite{F1}) that for $6$-configurations $\P$ from components
$\LC62$, $\LC63$, and $\LC66$, groups $\Aut_L(\P)$ are
respectively $\Z/4$, $\DD_3$, and the icosahedral group. These facts however are not used in our paper.
\end{remark}

\subsection{$\Aut(\P)$-action on L-polygons}\label{LGC6}
The $\binom{n}2$ lines passing through the pairs of points of a simple $n$-configuration $\P$
divide $\Rp2$ into polygons that we call \textit{L-polygons associated to $\mathcal{P}$}. Group $\Aut_L(\P)$ acts naturally on the set of those 
 L-polygons that cannot be collapsed
in a process of L-deformation.

It is easy to check that 
 for $n=5$ none of the 31 L-polygons can be collapsed. Thus,
we obtain an action of $\Aut_L(\P)=\DD_5$, which divide the set of $31$ L-polygons into 6 orbits:
three {\it internal orbits} formed by L-polygons lying inside pentagon $\G_\P$ and three
{\it external} ones, placed outside $\G_\P$ (see Figure \ref{4lpolygons}).
\begin{figure}[h]
\centering
\scalebox{0.3}{\includegraphics{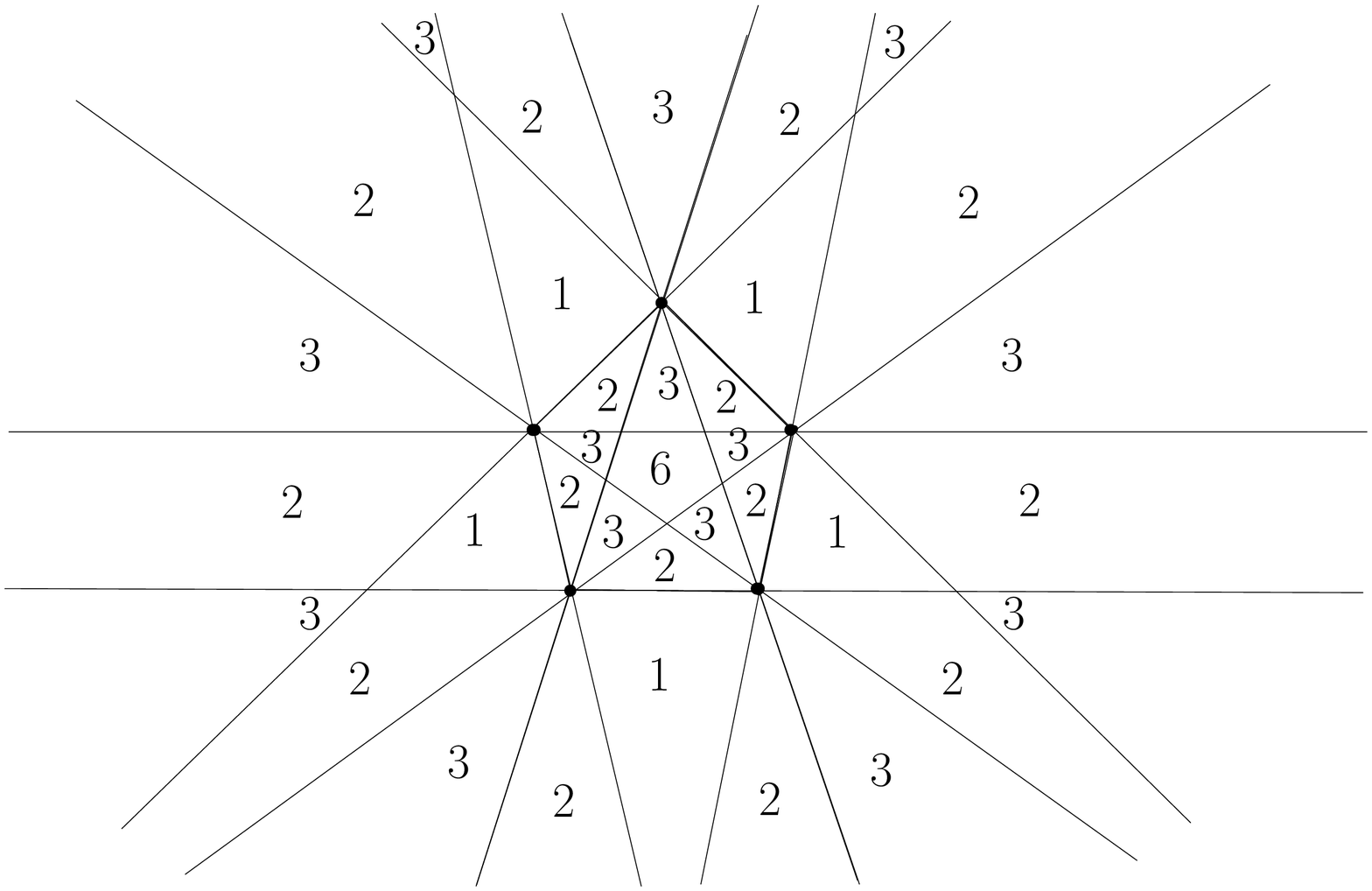}}
\caption{$\DD_5$-orbits of L-polygons for $\P\in \LC5{}$.
Labels $i=1,2,3,6$ represent the corresponding classes $\LC6{i}\ni\P'$.
Three internal orbits (inside the pentagon) are labeled by $2$, $3$, $6$ and three
external orbits (outside the pentagon) by $1$, $2$, $3$}
\label{4lpolygons}
\end{figure}

By adding a point $p$ in one of the L-polygons associated to $\P$ we obtain
a simple $6$-configuration, $\P'=\P\cup\{p\}$.
The L-deformation class of $\P'$ depends obviously only on the $\Aut_L(\P)$-orbit of the L-polygon containing $p$.
Figure \ref{4lpolygons} shows the correspondence between the six orbits and the four classes $\LC6{i}$.
Note that for $i$ equal to $1$ and $6$
class $\LC6{i}$ is represented by one orbit, while for $2$ and $3$ such class is represented by two orbits.
This is because $\Aut_L(\P)$ acts transitively on the points of $\P$ for $i=1,6$, while for $i=2,3$
the vertices of $\P$ split into two $\Aut_L(\P)$-orbits.

\subsection{The dual viewpoint}\label{dual}
In the dual projective plane $\widehat{\Rp2}$, consider the arrangement of lines
$\widehat{\P}=\{\widehat p_1,\dots,\widehat p_n\}$ which is
dual to a given $n$-configuration $\P=\{p_1,\dots,p_n\}$.
Lines $\widehat p_i$ divide $\widehat{\Rp2}$ into polygons that we call
{\it subdivision polygons} of $\widehat{\P}$.
We define the {\it polygonal spectrum} of an $n$-configuration $\P$ as the $(n-2)$-tuple
$f=(f_3,f_4,\dots,f_n)$, where $f_k$ is the number of $k$-gonal subdivision polygons of $\widehat{\P}$.
Euler's formula easily implies that $\sum_{k=3}^n(k-4)f_k=-4$ for simple $n$-configurations.
It is easy to see also that $f_3\ge5$ if $n\ge5$ (in fact, it is known that $f_3\ge n$), which
implies that for $n\ge5$, at least one subdivision polygon has $5$ or more sides.

If $\ell\subset\Rp2 \sm \P$ is a line, then
point $\widehat\ell\in\widehat{\Rp2}$ that is dual to $\ell$
should belong to one of the
subdivision polygons, say $F$. Then $F$ is an $m$-gon if and only if
the convex hull, $H$, of $\P$ in the affine plane $\Rp2\sm\ell$ is an $m$-gon.
Note that
$m$-gons $F$ and $H$ are dual:
points $p\in H$ are dual to lines $\widehat p$ disjoint from $F$ (and vice versa).

This gives two options for a simple $6$-configuration $\P$.
The first option is $f_6>0$ that implies $\P\in\LC61$.
The second option is $f_6=0$, $f_5>0$, which
means that the convex hull of $\P$ is a pentagon in some affine chart $\Rp2\sm\ell$.

A simple $7$-configurations is called {\it heptagonal} if $f_7>0$,
{\it hexagonal} if $f_7=0$ and $f_6>0$, and {\it pentagonal} if $f_6=f_7=0$ and $f_5>0$.
Note that $f_5+f_6+f_7>0$ for any simple 7-configuration, and so, one of these three conditions is satisfied.
In terms of the affine chart $\R^2=\Rp2\sm\ell$, these three cases give (if point $\hat\ell$ is chosen inside a subdivision polygon with the maximal
number of sides):
$7$ points forming a convex heptagon, $6$ points forming a convex hexagon plus a point inside it,
and $5$ points forming a convex pentagon plus two points inside it (see Table below).
\begin{table}[h!]
\caption{Derivative codes and polygonal spectra}
\centering
\label{spectrandcodes}
\scalebox{0.9}
{\begin{tabular}{|l c c|}
\hline%
$\mathcal{P}\in \LC7{}$ &
$\s=(\s_{1},\s_{2},\s_{3},\s_{6})$ & $f=(f_{3},f_{4},f_{5},f_{6},f_{7})$ \\
\hline
Heptagonal & $(7,0,0,0)$ & $(7,14,0,0,1)$ \\
\hline
Hexagonal & $(3,4,0,0)$ & $(7,13,1,1,0)$ \\
					& $(2,2,3,0)$ & $(8,11,2,1,0)$  \\
					& $(1,2,2,2)$ & $(11,5,5,1,0)$  \\
					& $(1,0,6,0)$ & $(9,9,3,1,0)$   \\ \cline{2-3}
\hline
Pentagonal with $\sigma_{1}=1$ & $(1,6,0,0)$ & $(7,12,3,0,0)$ \\
													& $(1,4,2,0)$ & $(8,10,4,0,0)$  \\
													& $(1,2,4,0)$ & $(9,8,5,0,0)$   \\ \cline{2-3}
Pentagonal with $\sigma_{1}=0$ & $(0,4,3,0)$ & $(8,10,4,0,0)$  \\
													& $(0,6,1,0)$ & $(7,12,3,0,0)$  \\
													& $(0,3,3,1)$ & $(10,6,6,0,0)$  \\ \cline{2-3}
\hline
\end{tabular}}
\end{table}

\subsection{Simple $7$-configurations with $\s_1>0$}\label{simple7}
Following \cite{F1}, we outline here the L-deformation classification in the most essential for us case
of simple $7$-configurations with $\s_1>0$. By definition, such configurations can be
presented as $\P'=\P\cup\{p\}$, where $\P$ is
its cyclic 6-subconfiguration and $p$ is an additional point (there are precisely $\s_1$ ways to choose such
a decomposition of $\P'$).

Like in the case of 6-configurations considered above, the monodromy group
$\Aut_L(\P)=\DD_6$ of $\P\in\LC61$ acts on the L-polygons associated to $\P$
(see Figure \ref{8lregions}), and the
L-deformation class of $\P'$ depends only on the orbit ($[A],[B],\dots$)
of the L-polygon that contains point $p$.
\begin{figure}[h]
\begin{center}
{\scalebox{0.38}{\includegraphics{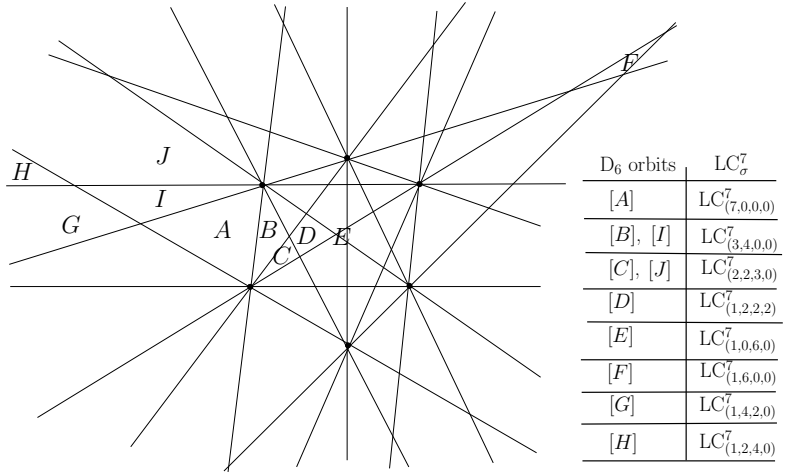}}}
\end{center}
\caption
{Four internal L-polygons, $B,C,D,E$, and six external L-polygons, $A,F,G,H,I,J$, that represent ten
$\DD_6$-orbits}
\label{8lregions}
\end{figure}
An additional attention is required to the four L-polygons that can be collapsed, namely, the "central"
triangle $E$ in Figure \ref{8lregions} and three other triangles, one of which is marked as $F$ on this Figure:
it is bounded by a principal diagonal and the two sides of the hexagon
that have no common vertices with this diagonal.
We skip here the arguments from \cite{F1} (but after passing to the Q-deformation
classification in Propositions \ref{specialqdeformation} and \ref{F-connect},
we will provide in fact a more subtle version of this proof).

Note that $\DD_6$-action is well-defined on the contractible
L-polygons as well as on the non-contactable ones: this action preserves
$E$ invariant and naturally permutes the three polygons of type $F$ so that they
form a single orbit, $[F]$.

It is easy to see that $f_6+f_7\le1$ for any simple 7-configuration, and that our assumption on
$\P'$ admits three options. The first option is $f_7=1$, that is to say, $\P'$ is a heptagonal configuration.
This correspond to location of $p$ inside one of the six L-polygons from the $\DD_6$-orbit $[A]$ of a triangle $A$.
The second option is location of $p$ inside hexagon $\G_\P$, in one of the {\it internal L-polygons}
from the orbits $[B]$, $[C]$, $[D]$, or $[E]$ (see Figure \ref{8lregions}). In this case, $f_7=0$ and
$f_6(\P')=1$, that is to say, configuration $\P'$ is hexagonal.
In the remaining case, $p$ lies outside $\G_\P$, but not inside one of the six triangles of type $A$.
Then $\P'$ may be either hexagonal with $\s_1\ge2$, or pentagonal with $\s_1\ge1$.

 Totally, we enumerated 8 L-deformation classes out of 11. The remaining 3 pentagonal classes with $\s_1=0$
can be described by placing two points inside a convex affine 5-configuration, as it can be understood from
Figure \ref{graphLG7} (for details see \cite{F1}).

\subsection{Coloring of graphs $\G_\P$ for typical $6$-configurations}\label{QGC}
Given a typical $6$-configuration $\P$, we say that its point $p\in\P$ is \textit{dominant} (\textit{subdominant})
if it lies outside of (respectively, inside) conic
$Q_p$ that passes through the remaining $5$ points of $\P$.
Here, by points {\it inside} ({\it outside of}) $Q_p$ we mean points lying in the component
of $\Rp2\sm Q_p$ homeomorphic to a disc, (respectively, in the other component).
We color the vertices of adjacency graph $\G_\P$:
the dominant points of $\P$ in black
and subdominant ones in white, see Figure \ref{adjgraph6} for the result.
\begin{figure}[h]
 \centering
	\subfigure[$\QC61$]{\scalebox{0.37}{\includegraphics{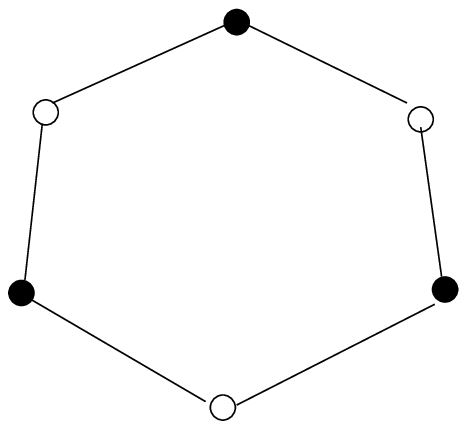}}}\;\;\;\;\;
  	\subfigure[$\QC62$]{\scalebox{0.37}{\includegraphics{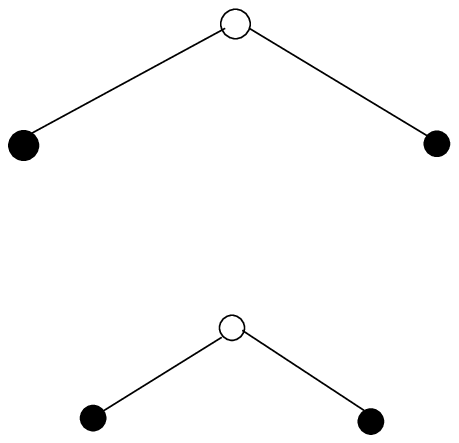}}} \;\;\;\;\;
    \subfigure[$\QC63$]{\scalebox{0.37}{\includegraphics{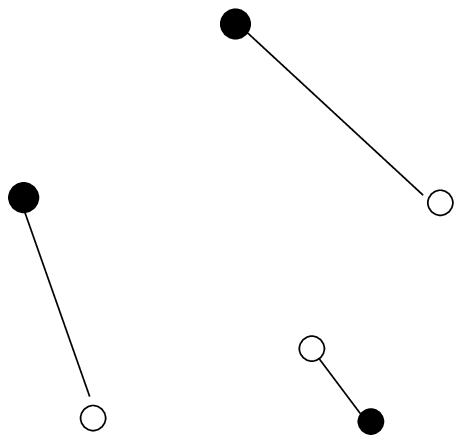}}}\;\;\;\;\;
    \subfigure[$\QC66$]{\scalebox{0.37}{\includegraphics{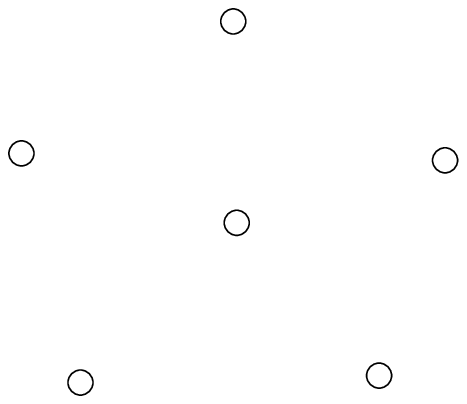}}}\\
    \caption[The four $Q$-deformation classes in $QC^6$]{Decorated adjacency graphs of typical $6$-configurations}
  \label{adjgraph6}
\end{figure}

Graphs $\G_\P$ are bipartite, \ie,  adjacent vertices have different colors.

\begin{lemma} \label{sd} For a typical $6$-configuration $\P$,
every edge of $\G_\P$
connects a dominant and a subdominant points.
\end{lemma}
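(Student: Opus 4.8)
The plan is to prove the apparently stronger fact that, whenever $pq$ is an edge of $\G_\P$, \emph{exactly one} of $p,q$ is subdominant; since every vertex of $\G_\P$ is either dominant or subdominant, this is the assertion of the lemma (and in particular shows $\G_\P$ is bipartite). Write $\{a,b,c,d\}=\P\sm\{p,q\}$, let $\ell$ be the line through $p$ and $q$, and let $\tau$ be the arc of $\ell$ joining $p$ to $q$ which, by the definition of $\G_\P$, meets none of the six lines $\overline{xy}$ with $x,y\in\{a,b,c,d\}$. Since $\P$ is simple, $p$ and $q$ themselves lie on none of these six lines, so $p$, $q$ and $\tau$ all lie in one region $R$ of the complement of the six-line arrangement. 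As $R$ is disjoint from each of these lines, it is an intersection of half-planes in the affine chart $\Rp2\sm\overline{ab}$, hence a convex open disc.

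Next I bring in the pencil $\mathcal C$ of conics through $a,b,c,d$. Its only singular members are the three line-pairs $\overline{ab}\cup\overline{cd}$, $\overline{ac}\cup\overline{bd}$, $\overline{ad}\cup\overline{bc}$; these, together with their nodes (the diagonal points of the quadrangle $abcd$) and the base points $a,b,c,d$, all lie on the six lines and so are disjoint from $R$. Hence the map $h\colon R\to\Rp1$ sending a point $x$ to the parameter of the unique member $C_x\in\mathcal C$ through $x$ is well defined, smooth, and --- by the standard fact that resolving the base points turns $\mathcal C$ into a conic bundle whose only singular fibres lie over the three line-pairs --- a submersion on $R$. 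Its image is connected, hence contained in one of the three open arcs cut out of $\Rp1$ by the three singular values; call this arc $A$ and fix an identification $A\cong(0,1)$. The conics $Q_p$ (through $q,a,b,c,d$) and $Q_q$ (through $p,a,b,c,d$) are smooth members of $\mathcal C$ meeting $R$ at $q$, resp.\ $p$; set $h_0=h(q)$ and $h_1=h(p)$, so $Q_p=C_{h_0}$, $Q_q=C_{h_1}$, and typicality ($p\notin Q_p$) forces $Q_p\ne Q_q$, i.e.\ $h_0\ne h_1$; both values lie in the open interval $h(R)$, and we may assume $h_0<h_1$.

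The core of the argument consists of two observations about an arbitrary smooth member $C_c$, $c\in h(R)$. First, $C_c\cap R$ is a finite union of disjoint arcs cutting the disc $R$ into pieces whose adjacency graph is a tree; crossing one of these arcs reverses the sign of $h-c$ (this is where the submersion property of $h$ is used) and at the same time swaps inside/outside $C_c$, so the two induced $2$-colourings of this tree agree up to a global swap. Consequently $\{x\in R:h(x)>c\}$ lies entirely inside $C_c$ or entirely outside it; record which by a sign $\epsilon(c)\in\{+,-\}$. Second, as $c$ ranges over the interval $h(R)$ the smooth conics $C_c$ form an ambient isotopy of $\Rp2$ (no singular member occurs over $A$), so ``$\{h>c\}\cap R$ lies inside $C_c$'' is a locally constant condition and $\epsilon$ is constant on $h(R)$. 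Evaluating at $c=h_0$ and $c=h_1$: since $h(p)=h_1>h_0$, the point $p$ belongs to $\{h>h_0\}\cap R$, so $p$ is subdominant precisely when $\epsilon(h_0)=+$; since $h(q)=h_0<h_1$, the point $q$ belongs to $\{h<h_1\}\cap R$, so $q$ is subdominant precisely when $\epsilon(h_1)=-$. As $\epsilon(h_0)=\epsilon(h_1)$, exactly one of $p,q$ is subdominant, which proves the lemma.

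I expect the main obstacle to be a careful justification that $h$ has no critical points on $R$ --- equivalently, that a pencil of conics with four distinct base points has smooth total space of fibres away from the nodes of its three reducible members --- and, relatedly, the bookkeeping in the ``unique $2$-colouring of a tree'' step: the verification that crossing a component of $C_c\cap R$ genuinely flips the sign of $h-c$, and that the relevant dual graph is indeed a tree (which is where convexity of $R$ enters). The isotopy statement is routine (isotopy extension), as is the reduction in the first paragraph.
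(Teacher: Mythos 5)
Your proof is correct and takes essentially the same route as the paper: the paper's one-sentence argument likewise analyzes the pencil of conics through the four remaining points and rests precisely on the observation that its singular members (the three line pairs) cannot cross the edge joining $p$ and $q$. You have simply supplied in full the details the paper leaves implicit --- the submersion property of the pencil map on the chamber $R$, the two-colouring of $R\sm C_c$, and the local constancy of $\epsilon$.
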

\begin{proof} It follows from analysis of the pencil of conics passing through $4$ points of $\P$:
 a singular conic from this pencil cannot intersect an edge of $\G_\P$ connecting the remaining two points.
\end{proof}

\subsection{$Q$-deformation components of typical $6$-configurations}\label{6confs}
Let $\QD{n}\subset\LC{n}{}$ denote the subset formed by simple $n$-configurations, which have a subconfiguration
of six points lying on a conic. Then $\QC{n}{}=\LC{n}{}\sm\QD{n}$ is the set of typical $n$-configurations.

Note that $\QD6\subset\LC61$, so, $\LC6i$ for $i=2,3,6$ are formed entirely by typical configurations and thus,
give three Q-deformation components $\QC6i=\LC6i$, $i=2,3,6$, of typical $6$-configurations.
It follows immediately also that $\Aut_Q(\P)=\Aut_L(\P)$ for $\P$ from these three Q-deformation components.

To complete the classification it is left to observe connectedness of $\QC61=\LC61\sm\QD6$, which implies that $\QC61$ is the remaining Q-deformation component in $\QC6{}$.

Connectedness follows immediately from the next Lemma and the fact that any $6$-configuration $\P\in \QC{6}{1}$ has a dominant point (in fact, it has exactly three such points, see Figure \ref{adjgraph6}).

\begin{lemma} \label{quadraticsix} Consider two hexagonal $6$-configurations $\P^i\in\QC61$, with marked dominant points
$p^i\in\P^i$, $i=0,1$.
Then, there is a $Q$-deformation $\P^t$, $t\in[0,1]$ that takes $p^0$ to $p^1$.
\end{lemma}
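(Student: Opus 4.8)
The plan is to put both configurations, by a projective deformation, into a concrete model in which the five ``coconic'' points lie on one fixed conic, and then to connect them inside that model using the $\DD_5$‑orbit picture of L‑polygons recalled in Subsection~\ref{LGC6}.

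First I would use that $p^i$ is dominant in $\P^i$: then $\P^i\sm\{p^i\}$ is a simple $5$‑configuration, so it lies on a nonsingular conic $Q^i$, with $p^i$ in the M\"obius‑band component of $\Rp2\sm Q^i$. Since $PGL_3(\R)$ is connected and transitive on nonsingular conics, I would move $\P^i$ along a path $g^i_t$ in $PGL_3(\R)$ with $g^i_0=\mathrm{id}$ and $g^i_1(Q^i)=C$ for a fixed conic $C$; this is a Q‑deformation, because collinearity and coconicity are projectively invariant, and it carries $p^i$ continuously to $g^i_1(p^i)$. So it suffices to connect two configurations of the form $\{a_1,\dots,a_5\}\cup\{p\}$ with $a_j\in C$ and $p$ exterior to $C$. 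The point of this normalization is that no six points of such a configuration can be coconic — the only conic through any five of them is $C$, which misses $p$ — so from now on ``typical'' means simply ``simple''; the locus $\QD6$ has disappeared from the picture.

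Next I would introduce the model space $\mathcal Y$ of pairs $(\{a_1,\dots,a_5\},p)$ with the $a_j\in C$ distinct, $p$ exterior to $C$, and the resulting $6$‑configuration simple and cyclic, i.e.\ in $\LC61$. By the previous step $\mathcal Y$ parametrizes the members of $\QC61$ together with a marked dominant point (namely $p$), and a path in $\mathcal Y$ is exactly a Q‑deformation of the required kind, so the lemma amounts to path‑connectedness of $\mathcal Y$. I would prove this by looking at the projection $\mathcal Y\to\mathcal B$ onto the space $\mathcal B$ of unordered $5$‑point subsets of $C$: the base $\mathcal B$ is path‑connected, and while the five points move on $C$ the combinatorial type of the arrangement of their ten secants stays constant (that is the statement that simple $5$‑configurations form the single class $\LC5{}$, with its $31$ L‑polygons), so $\mathcal Y\to\mathcal B$ is a locally trivial bundle whose fibre is the set of those exterior positions of $p$ that land in $\LC61$. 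By Subsection~\ref{LGC6} this fibre is the exterior part of a single $\DD_5$‑orbit of L‑polygons — finitely many path‑connected pieces that are cyclically permuted by the monodromy of the loop in $\mathcal B$ given by rotating the five points rigidly by $2\pi/5$. Since the total space of a locally trivial bundle over a path‑connected base is path‑connected once the monodromy permutes the (path‑connected) fibre components transitively, $\mathcal Y$ is path‑connected, which proves the lemma.

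The main obstacle is precisely this last point. The locus $\QD6$ is codimension one in $\LC61$, so \emph{a priori} its removal could disconnect $\LC61$; the content of the lemma is that it does not, equivalently that one can drag the five coconic points all the way around $C$ while keeping the sixth point in the $\LC61$‑region (and exterior to $C$) at every instant. Rather than re‑deriving the cell structure, I would import the $\DD_5$‑orbit description and the transitivity of $\Aut_L$ on points recalled in Subsection~\ref{LGC6}, so that the required isotopy of the marked point is produced abstractly by lifting the rotation loop through the bundle $\mathcal Y\to\mathcal B$; the leftover verifications — local triviality of the bundle over $\mathcal B$, and path‑connectedness of each fibre piece after intersecting with the exterior of $C$ — are routine.
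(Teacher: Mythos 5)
Your proof is correct and follows essentially the same route as the paper: delete the marked dominant point, observe that dominance means the sixth point sits in the part of a type-$1$ L-polygon lying outside the conic through the remaining five points, and use the transitive $\DD_5$ (rotation) monodromy of the pentagon together with non-collapsibility of these regions to drag $p^0$ to $p^1$. Your normalization to a fixed conic $C$ and the explicit fibration over $5$-point subsets of $C$ is just a concrete repackaging of the paper's statement that $\Aut_L=\Aut_Q=\DD_5$ acts transitively on the relevant Q-regions and that an L-deformation of the pentagon extends to the sixth point.
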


\begin{proof}
The same idea as in Subsection \ref{LGC6} is applied: the triangular L-polygons marked by $1$ on Figure \ref{4lpolygons}
 are divided into pairs of {\it Q-regions}
 by the conic passing through the vertices of a pentagon.
A sixth point placed outside (inside) of the conic is dominant (subdominant).
The monodromy group $\Aut_L(\P^i\sm\{p^i\})=\Aut_Q(\P^i\sm\{p^i\})=\DD_5$, $i=0,1$, 
acts transitively on the Q-regions of the same kind (in our case, on the parts of triangles marked by 1 that lie outside the conic), and these regions cannot be
 contracted in the process of L-deformation of the pentagon. Therefore, an L-deformation between
 $\P^0\sm\{p^0\}$ and $\P^1\sm\{p^1\}$ that brings the Q-region containing $p^0$ into the one containing $p^1$ can be extended to a required Q-deformation $\P^t$ (see Figure~\ref{regionfor6pts}).
\begin{figure}[h]
  \centering
  {\scalebox{0.28}{\includegraphics{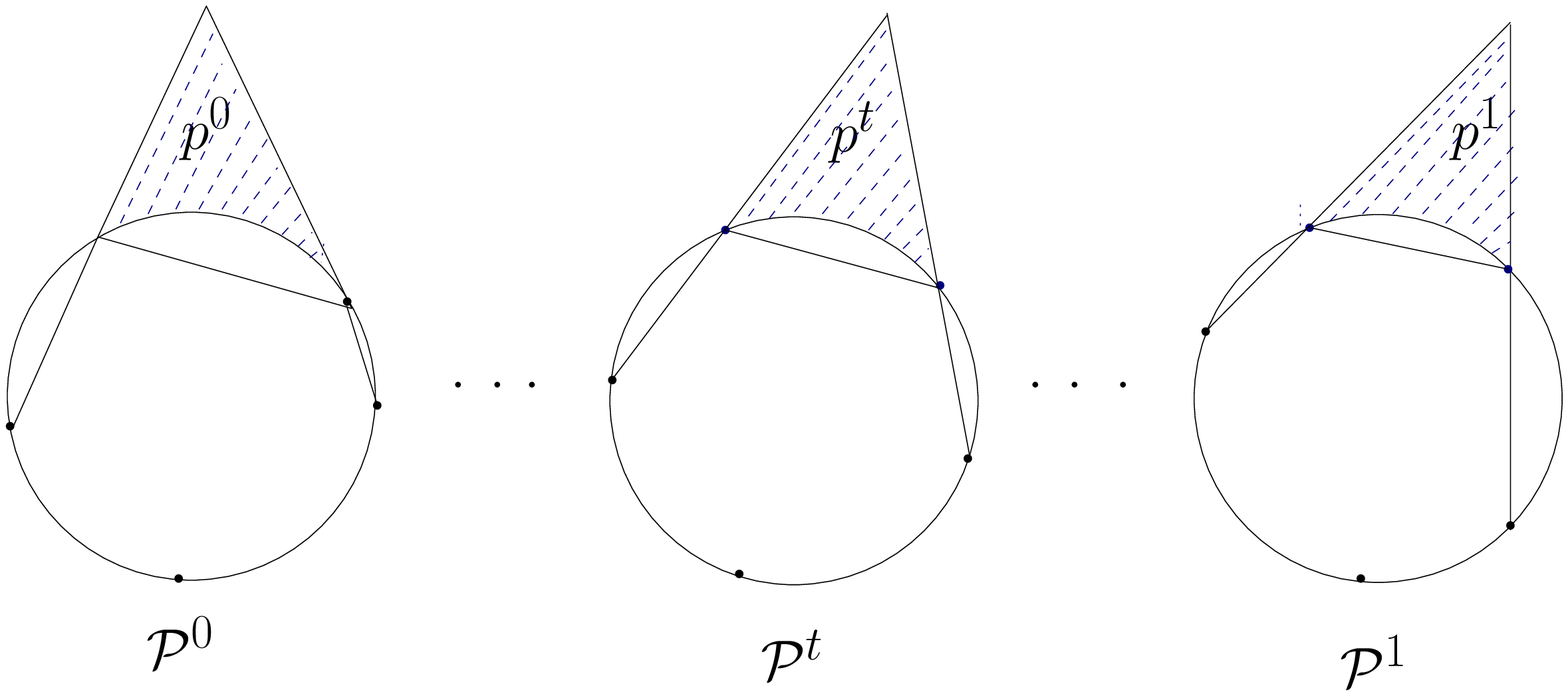}}}
  \caption{Continuous family of triangles of type ``$1$''}
	\label{regionfor6pts}
\end{figure}
\end{proof}
It follows easily that $\Aut_Q(\P)\cong\DD_3$, for $\P\in\QC61$, namely, $\DD_3$
 is a subgroup of $\Aut_L(\P)\cong\DD_6$
that preserves the colors of vertices of graph $\G_\P$ on Figure \ref{adjgraph6}.

\subsection{Q-deformation components of typical $7$-configurations: trivial cases}\label{3trivial}
Let $\QC7{\s}=\LC7{\s}\sm\QD7$, where $\s$ is one of the 11 derivative codes of $7$-configurations
(see Figure \ref{graphLG7} or Table \ref{spectrandcodes}).
Like in the case of 6-configurations,
some of the L-deformation components $\LC7{\s}$, namely, the ones with $\s_1=0$
are disjoint from $\QD7$, therefore in these cases $\QC7{\s}=\LC7{\s}$ are Q-deformation components.
From Table \ref{spectrandcodes}, this holds for $\s$ being $(0,4,3,0)$, $(0,6,1,0)$, and
$(0,3,3,1)$.

\section{Heptagonal $7$-configurations}\label{hep1}

\subsection{Dominance indices}
We shall prove in Subsection \ref{heptagonal-Qdeformations} connectedness of space $\QC7{(7,0,0,0)}$ formed by heptagonal typical configurations.
For a fixed configuration $\P\in\QC7{(7,0,0,0)}$ and any pair of points $p,q\in\P$ let us denote by $Q_{p,q}$
the conic passing through the other five points of $\P$.
For each $p\in\P$ let us denote by $d(p)$ the number of points $q\in \P\sm\{p\}$
for which $p$ lies outside of the conic $Q_{p,q}$; this number $0\le d(p)\le 6$ will be called
the \textit{dominance index} of $p$.

The crucial fact for proving connectedness of $\QC7{(7,0,0,0)}$
is existence of a point $p\in\P$ such that $d(p)=6$.
%
%
We shall prove more: among the 14 ways to numerate cyclically the vertices of heptagon $\G_\P$ (starting from any vertex, one can go around
in two possible directions), one can distinguish a particular one that we call the {\it canonical cyclic numeration}.

\begin{proposition}\label{cyclicorder}
For any $\P\in\QC{7}{(7,0,0,0)}$, there exists a canonical cyclic numeration of its points, $\P=\{p_0,\dots,p_6\}$, such that
$d(p_k)$ is $k$ for odd $k$ and $6-k$ for even. In the other words, the sequence of $d(k)$ is $6,1,4,3,2,5,0$.
\end{proposition}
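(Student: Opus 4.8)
The plan is to extract combinatorial information about the conics $Q_{p,q}$ from the convex geometry of the heptagon $\G_\P$, realized in an affine chart $\R^2=\Rp2\sm\ell$ where $\P$ is a convex heptagon with vertices cyclically ordered. First I would fix such a cyclic order $p_0,p_1,\dots,p_6$ (in some direction, with some starting vertex; we have $14$ choices) and analyze, for a single point $p=p_k$ and a single companion $q=p_j$, whether $p_k$ lies inside or outside the conic $Q_{p_k,p_j}$ through the remaining five vertices. The key observation is that the five points lying on $Q_{p_k,p_j}$ are the heptagon vertices other than $p_k,p_j$, and their cyclic order along the heptagon is the same as their order along the conic; so whether $p_k$ is inside or outside $Q_{p_k,p_j}$ is governed by where $p_k$ sits relative to the arcs of that conic cut off by the chords joining cyclically consecutive (among the five) points. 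Concretely, I expect the inside/outside alternative to depend only on the ``cyclic distance'' between the indices $k$ and $j$ along the heptagon (i.e. on $\min(|k-j|,7-|k-j|)\in\{1,2,3\}$) together with a parity/orientation bit — this is exactly the sort of statement that produces a sequence like $6,1,4,3,2,5,0$, where $d$ alternates between small and large values as one walks around the heptagon.

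The second step is to pin down that dependence precisely. I would argue that for a convex $7$-gon all ``combinatorial configurations'' of the inside/outside pattern are the same, because $\QC7{(7,0,0,0)}$ is claimed (and will be shown in Subsection~\ref{heptagonal-Qdeformations}) to be connected — but to avoid circularity here I would instead do the computation directly on one explicit model, e.g. seven points on a conic slightly perturbed, or the vertices of a regular heptagon, and verify the pattern $d(p_k)=k$ for odd $k$ and $6-k$ for even $k$ by a direct (finite) check of the $21$ conics $Q_{p,q}$. Since the inside/outside relation for a point versus a conic through five fixed points is locally constant as long as no degeneration (coconicity) occurs, and since a typical heptagonal configuration has none, the pattern is a deformation invariant within one L-deformation component; and all heptagonal configurations lie in the single component $\LC7{(7,0,0,0)}$, so the check on one model suffices for all of them. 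This also shows $d$ depends only on the combinatorial (cyclic) type, which is what lets us speak of a \emph{canonical} numeration: among the $14$ cyclic numerations, the sequence $(d(p_0),\dots,d(p_6))$ is some cyclic rotation/reflection of $6,1,4,3,2,5,0$, and we simply decree the canonical one to be the rotation whose first term is $6$ read in the orientation making the second term $1$ — i.e. the one with $d(p_0)=6$, $d(p_1)=1$.

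Finally I would check that such a selection is well defined, i.e. that the multiset of rotations and reflections of $6,1,4,3,2,5,0$ contains exactly one sequence starting $6,1,\dots$ (equivalently, that the sequence $6,1,4,3,2,5,0$ has trivial cyclic/dihedral symmetry as a labelled sequence). This is immediate: the value $6$ occurs once, fixing the starting vertex, and the two neighbors of that vertex have distinct $d$-values ($1$ on one side, $0$ on the other), fixing the direction. Hence the canonical cyclic numeration exists and is unique, and along it $d(p_k)$ equals $k$ for odd $k$ and $6-k$ for even $k$; in particular $d(p_0)=6$, which is the ``crucial fact'' needed later. The main obstacle I anticipate is the geometric core of step one: rigorously relating the position of a heptagon vertex relative to the conic through the other five vertices to the cyclic combinatorics, i.e. proving that the inside/outside bit is determined by the cyclic distance alone. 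I would handle this by the standard device of the pencil of conics through four of the five points (as in the proof of Lemma~\ref{sd}) together with a convexity argument: the chord of $\G_\P$ joining $p_k$ to an appropriate neighbor is crossed, or not crossed, by the relevant singular conics of the pencil in a way dictated purely by the order of indices around the heptagon.
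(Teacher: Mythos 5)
Your overall plan --- verify the pattern on one explicit model and propagate it by deformation invariance --- has a genuine gap at its load-bearing step. The bits $\idx{k}{j}$ (recording whether $p_k$ lies inside or outside $Q_{p_k,p_j}$) are locally constant only on the complement of the coconic discriminant, i.e.\ on $\QC7{(7,0,0,0)}=\LC7{(7,0,0,0)}\sm\QD7$. Connectedness of the L-deformation class $\LC7{(7,0,0,0)}$ therefore does not let you transport the pattern from your model to an arbitrary typical heptagonal configuration: a path in $\LC7{(7,0,0,0)}$ joining two typical configurations may cross $\QD7$ (six of the seven convex-position points can perfectly well become coconic along the way), and each transversal crossing flips one of the bits. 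What you actually need is a path avoiding $\QD7$, i.e.\ connectedness of $\QC7{(7,0,0,0)}$ --- but that is exactly what Proposition~\ref{cyclicorder} is used to establish in Subsection~\ref{heptagonal-Qdeformations}, so the shortcut you took to avoid circularity reintroduces it. A secondary problem is your first step: if the inside/outside bit were determined by the cyclic distance between $k$ and $j$ alone, all seven dominance indices $d(p_k)$ would coincide, contradicting the asserted sequence $6,1,4,3,2,5,0$; the ``parity/orientation bit'' you append is precisely where the content lies, and it is left undeveloped.

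The paper avoids deformation entirely and derives the sequence combinatorially for each individual configuration: Lemma~\ref{sd} applied to the six-point subconfigurations $\P\sm\{p_j\}$ gives $\idx{i}{j}+\idx{i+1}{j}=1$ for $j\ne i,i+1$; a Bezout argument gives $\idx{i}{i+1}=\idx{i+1}{i}$ away from the extremal vertices; these relations yield $d(p_i)+d(p_{i+1})\in\{5,7\}$ with the two cases alternating around the heptagon (Lemma~\ref{5or7}, Corollary~\ref{extremal}), which together with the uniqueness of the vertices with $d=0$ and $d=6$ (Lemma~\ref{unique}) forces the sequence up to rotation and reflection. Your closing remark about using the pencil of conics through four of the five points is in exactly this spirit and is the part worth developing; your final paragraph on uniqueness of the canonical numeration (the value $6$ occurs once and its two neighbours carry distinct values) is correct and matches what the paper leaves implicit. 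As written, however, the proof does not go through.
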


One can derive this proposition from the results of \cite[Sec.\,2.1]{S2}, but we give below a proof based on different (in our opinion, more transparent)
approach.
The first step of our proof is the following observation.

\begin{lemma}\label{unique}
For any $\P\in\QC{7}{(7,0,0,0)}$, there exists at most one point $p\in\P$ with the dominance index $d(p)=6$ and at most one with
$d(p)=0$.
\end{lemma}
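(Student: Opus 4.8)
The plan is to translate the statement into one about cyclic typical $6$-configurations and then rule out a repetition combinatorially. Since $\P$ is heptagonal, every point of $\P$ has $\delta=1$, so each $6$-subconfiguration $\P\sm\{q\}$ lies in $\LC61$, and being a subconfiguration of a typical one it lies in $\QC61$. For $p\ne q$ the conic $Q_{p,q}$ through $\P\sm\{p,q\}$ is exactly the conic through $(\P\sm\{q\})\sm\{p\}$, so ``$p$ lies outside $Q_{p,q}$'' is the same as ``$p$ is dominant in $\P\sm\{q\}$''. Hence $d(p)=6$ means precisely that $p$ is dominant in all six $6$-subconfigurations containing it, while $d(p)=0$ means $p$ is subdominant in all of them. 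Recall from Figure~\ref{adjgraph6} that a configuration in $\QC61$ has exactly three dominant and three subdominant points, and that by Lemma~\ref{sd} its adjacency graph --- the hexagonal $6$-cycle --- is bipartite with colour classes $\{$dominant$\}$ and $\{$subdominant$\}$; thus the three dominant vertices are pairwise non-adjacent, and likewise the three subdominant ones.

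Suppose, for contradiction, that distinct points $p,r\in\P$ both have $d=6$, and set $S=\P\sm\{p,r\}$, $|S|=5$. First, $d(p)=6$ (taking $q=r$) and $d(r)=6$ (taking $q=p$) both say that $p$ and $r$ lie outside the conic through $S$; hence $r$ is dominant in $\P\sm\{p\}$ and $p$ is dominant in $\P\sm\{r\}$, so in each of these two hexagons exactly two of the five points of $S$ are dominant. Second, for every $s\in S$ both $p$ and $r$ are dominant in the hexagon $\P\sm\{s\}$, so they are two of its three dominant vertices; consequently they are non-adjacent in $\G_{\P\sm\{s\}}$, and exactly one point $\varphi(s)\in S\sm\{s\}$ is the remaining dominant vertex. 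Unwinding the definitions, $\varphi(s)=s'$ if and only if $s'$ lies outside the conic $Q_{s,s'}$ through $\P\sm\{s,s'\}$; this yields a map $\varphi\colon S\to S$ with $\varphi(s)\ne s$ for all $s$.

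The heart of the proof is then to confront the function $\varphi$, the two ``exactly two dominant points of $S$'' counts in $\P\sm\{p\}$ and $\P\sm\{r\}$, and the cyclic alternation of dominant/subdominant vertices around each hexagon $\P\sm\{s\}$ (supplemented, where convenient, by the crossing description of adjacency in $\G_{\,\cdot\,}$): no assignment consistent with all of these constraints exists, which is the desired contradiction. I expect this bookkeeping to be the main obstacle, since one must track the dominant pattern simultaneously in all the hexagons $\P\sm\{p\}$, $\P\sm\{r\}$ and $\P\sm\{s\}$, $s\in S$. A cleaner alternative is to argue inside a single pencil: for each $s\in S$ the pencil of conics through the four points $\P\sm\{p,r,s\}$ contains precisely the three members $Q_{p,r}$, $Q_{p,s}$, $Q_{r,s}$ of our family, namely the conics of the pencil through $s$, $r$ and $p$ respectively, and the conditions ``$p$ outside $Q_{p,r}$ and $Q_{p,s}$'' and ``$r$ outside $Q_{p,r}$ and $Q_{r,s}$'' forced by $d(p)=d(r)=6$ contradict the interlacing of these conics along the pencil line $\Rp1$.

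Finally, the assertion that at most one point has $d(p)=0$ follows verbatim from the same argument with ``dominant''/``lies outside'' replaced throughout by ``subdominant''/``lies inside'', using that a configuration in $\QC61$ has exactly three subdominant points forming the other colour class of its adjacency hexagon.
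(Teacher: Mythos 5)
Your setup is sound and is essentially the paper's: $d(p)=6$ translates into ``$p$ is dominant in each of the six hexagonal subconfigurations $\P\sm\{s\}$, $s\ne p$,'' and the relevant structural fact is the alternation of dominant and subdominant vertices around each hexagon $\G_{\P\sm\{s\}}$ (Lemma~\ref{sd}, Figure~\ref{adjgraph6}). But the argument stops exactly where the work has to be done: both of your proposed endgames assert a contradiction without deriving one. In the first route you list the constraints (the map $\varphi$, the counts of dominant points of $S$, the alternation) and state that no consistent assignment exists, explicitly deferring the bookkeeping. Note that non-adjacency of $p$ and $r$ in every hexagon is \emph{not} by itself contradictory: if the two arcs of the heptagon $\G_\P$ between $p$ and $r$ contain $2$ and $3$ points, then $p$ and $r$ are non-adjacent in every $\G_{\P\sm\{s\}}$, so some further input is genuinely needed. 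The second route, confined to the single pencil through the four points $\P\sm\{p,r,s\}$, is more doubtful still: it involves only three conics and four ``outside'' conditions and makes no use of the cyclic order of the heptagon, whereas the obstruction is global; the claimed ``interlacing'' contradiction is not substantiated.

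The missing idea is a one-line parity argument, and it is what the paper uses. Since dominant and subdominant vertices alternate around $\G_{\P\sm\{s\}}$, two dominant vertices $p,r$ must be separated by an odd number of vertices along \emph{each} arc of that hexagon. The hexagon inherits its cyclic order from the heptagon $\G_\P$, whose two arcs between $p$ and $r$ contain $a$ and $b=5-a$ points; removing $s$ from one arc or the other yields separations $(a-1,b)$ or $(a,b-1)$, and since exactly one of $a,b$ is odd, a suitable (and available) choice of $s$ produces an arc with an even number of points, violating alternation --- the case $a=0$, where $p$ and $r$ are adjacent in $\G_\P$ and hence in every hexagon, is immediate. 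With this step supplied, your reduction closes, and the $d=0$ statement follows with the two colours exchanged, as you say.
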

\begin{proof} Assume that by contrary, $d(p)=d(q)=6$ for $p,q\in\P$. Then $p$ and $q$ are dominant points in
$6$-configuration $\P_r=\P\sm\{r\}$ for any $r\in\P\sm\{p,q\}$. But dominant and subdominant points in hexagon
 $\G_{\P_r}$ are alternating (see Figure \ref{adjgraph6}), and so,
 the parity of the orders of
dominant points, with respect to a cyclic numeration of the hexagon vertices, is the same.
By an appropriate choice of $r$, this parity however can be made different, which lead to a contradiction.
In the case $d(p)=d(q)=0$ a proof is similar.
\end{proof}

\subsection{Position of the vertices and edges of $\G_\P$ with respect to conics $Q_{i,j}$}
Let us fix any cyclic numeration $p_0,\dots,p_6$ of points of $\P\in\QC{7}{(7,0,0,0)}$. We
denote by $Q_{i,j}$ the conic passing through the points of $\P$ different from $p_i$ and $p_j$ and put
 $\idx{i}{j}=0$ if $p_i$ lies inside conic $Q_{i,j}$ and $\idx{i}{j}=1$ if outside, $0\le i,j\le6$, $i\ne j$.
By definition, we have
\begin{eqnarray}
\label{sumidx}
d(p_{i})=\sum_{0\leq j\leq 6, j\ne i} \idx{i}{j}, \quad i=0,\dots,6.
\end{eqnarray}

In what follows we apply ``modulo 7'' index convention in notation for $p_i$, $Q_{i,j}$ and $d_{i,j}$,
that is put $p_{i+1}=p_0$ if $i=6$, $p_{i-1}=p_6$ if $i=0$, etc.

\begin{lemma}\label{indexcalculation} Assume that $0\le i\le6$. Then
\begin{enumerate}
\item [(a)]
$\idx{i}{j}+\idx{i+1}{j}=1$ for all $0\le j\le 6$, $j\ne i,i+1$.
\item [(b)] $\idx{i}{i+1}=\idx{i+1}{i}$ and $\idx{i-1}{i}=\idx{i}{i-1}$ provided $d(p_i)\ne0,6$.
\item [(c)] $\idx{i-1}{i}\ne\idx{i}{i+1}$ provided  $d(p_i)\ne0,6$.
\end{enumerate}
\end{lemma}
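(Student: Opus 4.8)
The plan is to deduce all three parts from the classification of typical cyclic $6$-configurations recorded in Subsections~\ref{QGC}--\ref{6confs}. The key preliminary remark is that, for heptagonal $\P$, every $6$-subconfiguration $\P_k:=\P\sm\{p_k\}$ lies in $\QC61$: it is simple, it is typical (no coconic sextuple, since $\P$ is), and the heptagonal derivative code $(7,0,0,0)$ forces $\P_k\in\LC61$. Moreover, viewing $\P$ as a convex heptagon in a suitable affine chart, $\P_k$ is a convex hexagon there, so $\G_{\P_k}$ is the $6$-cycle on $p_{k+1},p_{k+2},\dots,p_{k-1}$ (the cyclic order of $\G_\P$ with $p_k$ removed, so that $p_{k-1}$ and $p_{k+1}$ become joined by a ``short-cut'' edge). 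By Lemma~\ref{sd} and the description of $\QC61$ (three dominant and three subdominant points, alternating around $\G_{\P_k}$), the colours alternate along this cycle; and by definition $p_\ell$ is dominant in $\P_k$ exactly when $\idx{\ell}{k}=1$.

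Given this, part~(a) is immediate: for $j\ne i,i+1$ the points $p_i$ and $p_{i+1}$ are consecutive vertices of $\G_\P$, hence remain adjacent in $\G_{\P_j}$ (an edge of $\G_\P$ joining two points of $\P_j$ is still an edge of $\G_{\P_j}$, since dropping $p_j$ only removes constraints), so they receive opposite colours in $\P_j$, i.e.\ $\idx{i}{j}+\idx{i+1}{j}=1$. I would record at this point the companion identity obtained by the same argument applied in $\P_i$ along its short-cut edge $p_{i-1}p_{i+1}$, namely $\idx{i-1}{i}+\idx{i+1}{i}=1$ for every $i$ (this is the one instance of the alternation \emph{not} covered by~(a), since $i-1$ and $i+1$ are not consecutive).

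For part~(b), note that $Q_{i,i+1}=Q_{i+1,i}$ is a single conic $C$, and since any five points of a simple configuration lie on a conic, all of $\P\sm\{p_i,p_{i+1}\}$ lies on $C$; hence $\idx{i}{i+1}=1$ iff $p_i$ is outside $C$, $\idx{i+1}{i}=1$ iff $p_{i+1}$ is outside $C$, and the claim amounts to $p_i$ and $p_{i+1}$ lying in the same component of $\Rp2\sm C$. I would argue this by the pencil-of-conics technique already used for Lemma~\ref{sd}: fixing four of the five points of $\P\sm\{p_i,p_{i+1}\}$, let $\mathcal L$ be the pencil of conics through them, parametrized by $\Rp1$; then $C$ is the member of $\mathcal L$ through the fifth of those points, and $\mathcal L$ also has members through $p_i$ and through $p_{i+1}$. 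The three singular members of $\mathcal L$ are unions of lines joining points of $\P$, none of which meets the edge $p_ip_{i+1}$ of $\G_\P$; as the member of $\mathcal L$ through a point varies continuously with that point, $p_i$ and $p_{i+1}$ correspond to parameters in the same one of the three open arcs into which the singular members cut $\Rp1$. The hypothesis $d(p_i)\ne0,6$ --- which says $p_i$ lies inside at least one, and outside at least one, of the conics $Q_{i,j}$ --- would then be used to exclude the one remaining possibility, that $p_ip_{i+1}$ crosses $C$; with this ruled out, $p_i$ and $p_{i+1}$ are on the same side of $C$. The identity $\idx{i-1}{i}=\idx{i}{i-1}$ follows verbatim with $i-1$ in place of $i+1$.

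Finally, part~(c) is then formal: by~(b) one has $\idx{i+1}{i}=\idx{i}{i+1}$, while the companion identity from the second paragraph gives $\idx{i-1}{i}=1-\idx{i+1}{i}$, so $\idx{i-1}{i}=1-\idx{i}{i+1}\ne\idx{i}{i+1}$. The hard part will be the geometric step in~(b): carrying out the pencil analysis carefully and pinning down exactly how $d(p_i)\ne0,6$ keeps the edge $p_ip_{i+1}$ off the conic $C$. Everything else reduces to bookkeeping on top of the $\QC61$ classification.
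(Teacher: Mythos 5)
Parts (a) and (c) of your proposal are correct and coincide with the paper's own argument: (a) is exactly Lemma~\ref{sd} applied to $\P\sm\{p_j\}$ (using, as you note, that the side $[p_i,p_{i+1}]$ of the convex heptagon survives as an edge of $\G_{\P\sm\{p_j\}}$), and your ``companion identity'' $\idx{i-1}{i}+\idx{i+1}{i}=1$, obtained from the short-cut edge of $\P\sm\{p_i\}$, is precisely how the paper deduces (c) from (b). The genuine gap is in part (b), which you have not proved. Your pencil-of-conics setup only shows that the parameters of the members of $\mathcal{L}$ through $p_i$ and through $p_{i+1}$ lie in the same arc of $\Rp1$ cut out by the three singular members; it says nothing about whether the parameter of the \emph{smooth} member $C=Q_{i,i+1}$ lies between them, and you explicitly defer ``pinning down exactly how $d(p_i)\ne0,6$ keeps the edge $p_ip_{i+1}$ off the conic $C$.'' That deferred step is the entire content of (b): the conclusion genuinely fails when the hypothesis fails, since for the special edge (joining the vertices of dominance indices $0$ and $6$, cf.\ Corollary~\ref{extremal}) the two endpoints lie on opposite sides of $Q_{i,i+1}$ and the edge does cross that conic. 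So the hypothesis must enter through a substantive geometric input, and no further ``bookkeeping'' on the pencil alone can supply it.

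For comparison, the paper closes (b) by a short Bezout count rather than a pencil argument. Suppose $\idx{i}{i+1}=1$ and $\idx{i+1}{i}=0$, i.e.\ $p_i$ lies outside $Q_{i,i+1}$ while $p_{i+1}$ lies inside. Since $d(p_i)\ne6$, there is some $j$ with $\idx{i}{j}=0$, i.e.\ a conic $Q_{i,j}$ containing $p_i$ in its interior; this conic passes through $p_{i+1}$ and meets $Q_{i,i+1}$ in the four points $p_k$, $k\ne i,i+1,j$. The positional data ($Q_{i,j}$ passes through a point inside $Q_{i,i+1}$ and bounds a disc containing a point outside it) forces a fifth intersection point of the two conics, contradicting Bezout; the case $\idx{i}{i+1}=0$, $\idx{i+1}{i}=1$ is symmetric, using $d(p_i)\ne0$ instead. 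If you wish to keep your pencil framework, you would need an argument of comparable substance at exactly the point you left open; as it stands, your write-up establishes (a), reduces (c) to (b), but leaves (b) unproven.
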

\begin{proof}
(a) follows from Lemma~\ref{sd} applied to $\P\sm\{p_j\}$.
Assume that (b) does not hold, say, $\idx{i}{i+1}=1$ and $\idx{i+1}{i}=0$ (the other case is analogous).
This means that $p_i$ lies outside of conic $Q_{i,{i+1}}$ and $p_{i+1}$ lies inside.
Since $d(p_i)\ne6$, there is another conic, $Q_{i,j}$ containing $p_i$ inside. This contradicts to the
Bezout theorem, since $Q_{i,{i+1}}$ and $Q_{i,j}$ have $4$ common points $p_k$, $0\le k\le6$, $k\ne i,i+1,j$,
and in addition one more point as it is shown on Figure~\ref{examplemutual}.
\begin{figure}[h!]
\centering
\scalebox{0.35}{\includegraphics{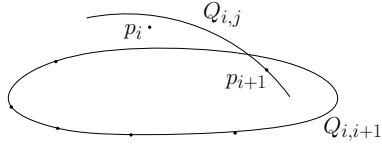}}
\caption{An extra intersection point of conics $Q_{{i},{i+1}}$ and $Q_{i,j}$}
\label{examplemutual}
\end{figure}
For proving (c) we apply Lemma ~\ref{sd} to the cyclic 6-configuration $\P\sm\{p_i\}$,
in which points $p_{i-1}$ and $p_{i+1}$ become consecutive, and thus, one and only one of them is dominant, say
$p_{i-1}$ (the other case is analogous). Then $\idx{i-1}{i}=1$ and $\idx{i+1}{i}=0$, and thus, $\idx{i}{i+1}=0$
as it follows from (b).
\end{proof}

We say that an edge $[p_i,p_{i+1}]$ of heptagon $\G_\P$ is {\it internal} (respectively, {\it external})
if its both endpoints lie inside (respectively, outside of)
conic $Q_{i,i+1}$, or in the other words, if $\idx{i}{i+1}=\idx{i+1}{i}=0$ (respectively, if
$\idx{i}{i+1}=\idx{i+1}{i}=1$).
If one endpoint lies inside and the other outside, we say that
this edge is {\it special} (see Figure \ref{edecoration}a--c).
\begin{figure}[h!]
\centering
{\scalebox{0.35}{\includegraphics{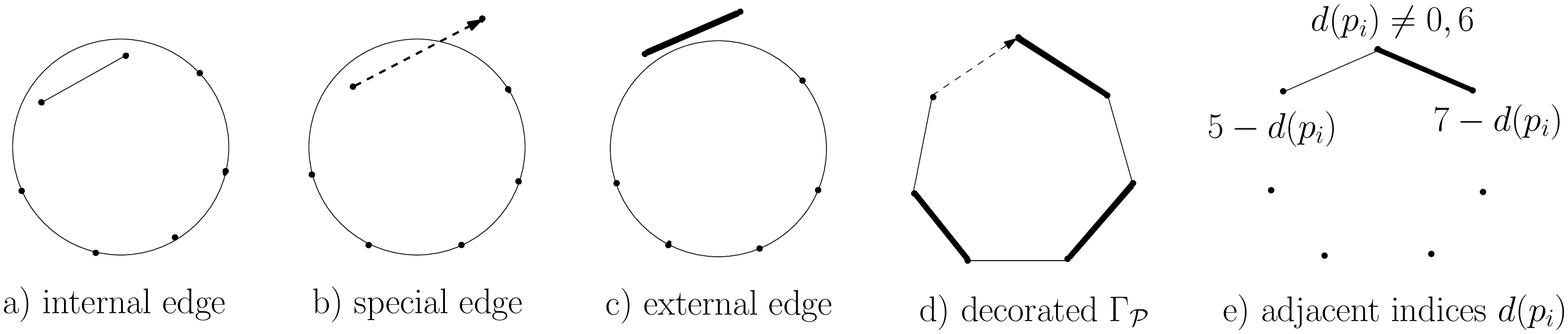}}}
\caption{Decoration of edges. Indices $d(p_i)$ for adjacent vertices.}
\label{edecoration}
\end{figure}
\begin{cor}\label{extremal} A special edge of the heptagon $\G_\P$ should connect a vertex of
dominance index $0$ with a vertex of index $6$, and in particular, such an edge is unique if exists.
The internal and external edges are consecutively alternating. In particular, a special edge must exist (since the number of edges is odd).
\qed
\end{cor}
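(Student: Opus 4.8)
I would deduce the statement directly from Lemma~\ref{indexcalculation}, together with Lemma~\ref{unique} and Lemma~\ref{sd}; fix an arbitrary cyclic numeration $p_0,\dots,p_6$ of $\P\in\QC{7}{(7,0,0,0)}$. First I would pin down which edges can be special. If the edge $[p_i,p_{i+1}]$ is special, then $\idx{i}{i+1}\ne\idx{i+1}{i}$, so part (b) of Lemma~\ref{indexcalculation}, applied once with index $i$ and once with index $i+1$, forces both $d(p_i)$ and $d(p_{i+1})$ to lie in $\{0,6\}$. These two endpoints cannot both have index $6$, since then $\idx{i}{i+1}=\idx{i+1}{i}=1$ and the edge would be external, not special; likewise they cannot both have index $0$. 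Hence a special edge joins a vertex of dominance index $0$ to one of index $6$, and since by Lemma~\ref{unique} there is at most one vertex of each of these indices, there is at most one special edge.

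Next I would establish a local alternation at every vertex of $\G_\P$. Suppose first $d(p_i)\notin\{0,6\}$. Then Lemma~\ref{indexcalculation}(b) gives $\idx{i-1}{i}=\idx{i}{i-1}$ and $\idx{i}{i+1}=\idx{i+1}{i}$, so neither edge at $p_i$ is special, and $[p_{i-1},p_i]$ is internal or external according as $\idx{i-1}{i}$ is $0$ or $1$, and similarly $[p_i,p_{i+1}]$ according to $\idx{i}{i+1}$; since $\idx{i-1}{i}\ne\idx{i}{i+1}$ by Lemma~\ref{indexcalculation}(c), these two edges have opposite types. Suppose now $d(p_i)=6$. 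Then $\idx{i}{j}=1$ for all $j\ne i$ by \eqref{sumidx}; applying Lemma~\ref{sd} to the $6$-configuration $\P\sm\{p_i\}$ (typical and cyclic, since $\P$ is a typical heptagonal configuration), in whose adjacency hexagon $p_{i-1}$ and $p_{i+1}$ are consecutive vertices, exactly one of $p_{i-1},p_{i+1}$ is dominant there, that is (dominance of $p_{i\pm1}$ in $\P\sm\{p_i\}$ amounting to $\idx{i\pm1}{i}=1$) exactly one of $\idx{i-1}{i},\idx{i+1}{i}$ equals $1$; hence exactly one of the two edges at $p_i$ is special and the other is external. The case $d(p_i)=0$ is symmetric, with ``internal'' replacing ``external''. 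So at every vertex of $\G_\P$ the two incident edges are of different types, counting ``special'' as a third type.

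It follows that the cyclic sequence of the seven edge types of $\G_\P$ has no two equal consecutive entries; if no edge were special, this would be a proper $2$-colouring (by ``internal''/``external'') of the edges of the $7$-cycle $\G_\P$, impossible since $7$ is odd. Hence a special edge exists, and it is unique by the first paragraph; deleting it leaves a path of six non-special edges whose consecutive members still have opposite types, which is the asserted alternation of internal and external edges. The one genuinely delicate point — the step I expect to be the main obstacle — is precisely the handling of the vertices of dominance index $0$ and $6$: one must rule out an index-$6$ vertex flanked by two external edges (and the mirror situation at an index-$0$ vertex), and it is exactly here that passage to the cyclic $6$-subconfiguration $\P\sm\{p_i\}$ and an appeal to Lemma~\ref{sd} are needed; everything else is a straightforward bookkeeping with the identities of Lemma~\ref{indexcalculation}.
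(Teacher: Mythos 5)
Your argument is correct and follows exactly the route the paper intends: the corollary is stated with an immediate \qed as a direct consequence of Lemmas~\ref{sd}, \ref{unique} and \ref{indexcalculation}, and your write-up is precisely that deduction spelled out. Your extra care at the vertices of dominance index $0$ and $6$ (reusing the Lemma~\ref{sd} argument from the proof of Lemma~\ref{indexcalculation}(c), which the paper only states for $d(p_i)\ne 0,6$) fills in a detail the paper leaves implicit, but it is the same underlying mechanism, so there is nothing to correct.
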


We sketch the internal and external edges of $\Gamma(\mathcal{P})$ respectively as thin and thick ones.
The special edge is shown dotted and directed from the vertex of dominance index $0$ to the one of dominance index $6$.
Corollary \ref{extremal} means that graph $\Gamma(\mathcal{P})$ decorated this way should look like is shown on
Figure \ref{edecoration}d.

\begin{lemma}\label{5or7} The sum $d(p_i)+d(p_{i+1})$ is $5$ if edge $[p_i,p_{i+1}]$ is internal, and is $7$ if external.
\end{lemma}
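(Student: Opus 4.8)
The plan is to relate the dominance indices $d(p_i)$ and $d(p_{i+1})$ of the two endpoints of an edge by exploiting the identity in Lemma~\ref{indexcalculation}(a), which pairs off the contributions of all the ``other'' conics $Q_{i,j}$ and $Q_{i+1,j}$ for $j\ne i,i+1$. Concretely, summing the relation $\idx{i}{j}+\idx{i+1}{j}=1$ over the five indices $j\in\{0,\dots,6\}\sm\{i,i+1\}$ immediately gives
\[
\sum_{j\ne i,i+1}\bigl(\idx{i}{j}+\idx{i+1}{j}\bigr)=5.
\]
Comparing this with the defining formula \eqref{sumidx} for $d(p_i)$ and $d(p_{i+1})$, the left side equals $d(p_i)+d(p_{i+1})$ minus exactly the two ``diagonal'' terms that are missing from the $j$-sums, namely $\idx{i}{i+1}$ (the term $j=i+1$ absent from $d(p_i)$) and $\idx{i+1}{i}$ (the term $j=i$ absent from $d(p_{i+1})$). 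Hence
\[
d(p_i)+d(p_{i+1})=5+\idx{i}{i+1}+\idx{i+1}{i}.
\]

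Now I would finish by cases according to the type of the edge $[p_i,p_{i+1}]$. If the edge is internal, then by definition $\idx{i}{i+1}=\idx{i+1}{i}=0$, so the right side is $5$. If the edge is external, then $\idx{i}{i+1}=\idx{i+1}{i}=1$, so the right side is $7$. This is exactly the claim. (The remaining possibility, that the edge is special, is the case $\idx{i}{i+1}\ne\idx{i+1}{i}$; it would give the sum $6$, but by Corollary~\ref{extremal} a special edge joins the unique vertex of index $0$ to the unique vertex of index $6$, consistent with $0+6=6$, and in any event Lemma~\ref{5or7} only concerns internal and external edges.)

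I do not expect a genuine obstacle here: the whole argument is bookkeeping built on Lemma~\ref{indexcalculation}(a), whose geometric content (Lemma~\ref{sd} applied to $\P\sm\{p_j\}$) has already been established. The only point requiring a little care is making sure the ``modulo $7$'' index convention is applied correctly so that the five summation indices $j\ne i,i+1$ and the two diagonal indices are accounted for without overlap or omission; once that is checked, the identity $d(p_i)+d(p_{i+1})=5+\idx{i}{i+1}+\idx{i+1}{i}$ is forced and the lemma follows at once.
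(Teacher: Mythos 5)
Your argument is correct and is essentially the paper's own proof: both split $d(p_i)+d(p_{i+1})$ into the two diagonal terms $\idx{i}{i+1}+\idx{i+1}{i}$ plus the sum over $j\ne i,i+1$, evaluate the latter as $5$ via Lemma~\ref{indexcalculation}(a), and read off the diagonal contribution from the definition of internal/external edges. The only cosmetic difference is that the paper also cites Lemma~\ref{indexcalculation}(b) to write the diagonal terms as $2\idx{i}{i+1}$, which your direct appeal to the definition makes unnecessary.
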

\begin{proof}
By (\ref{sumidx}), we have
\begin{eqnarray*} d(p_{i})+d(p_{i+1})
&=& (\idx{i}{i+1}+\idx{i+1}{i})+\sum_{\substack{
   0\leq j \leq 6 \\
   j\neq i,i+1
  }}(\idx{i}{j}+\idx{i+1}{j}),
\end{eqnarray*}
where by Lemma \ref{indexcalculation}(b), $(\idx{i}{i+1}+\idx{i+1}{i})=2\idx{i}{i+1}$ is $0$ if edge $[p_i,p_{i+1}]$
is internal and $2$ if external. By Lemma \ref{indexcalculation}(a), the remaining sum is $5$.
\end{proof}

\subsection{Proof of Proposition~\ref{cyclicorder}}
Lemma \ref{5or7} together with Corollary \ref{extremal} let us recover the whole sequence $d(p_i)$, $i=0,\dots6$, from any value
different from $0$ and $6$ (which exists by Lemma \ref{unique}), see Figure \ref{edecoration}e.

\subsection{Connectedness of $\QC{7}{(7,0,0,0)}$}\label{heptagonal-Qdeformations}
This proof is similar to the proof of connectedness of $\QC61$ in Subsection \ref{6confs}.
 Given $\P\in \QC{7}{(7,0,0,0)}$, assume that its points $p_0,\dots,p_6$ have the canonical cyclic
numeration, and consider subconfiguration $\P_0=\P\sm\{p_0\}\in\QC61$. As we observed in Subsection \ref{simple7},
point $p_0$ lies in a triangular L-polygon $A$ associated with $\P_0$ (see Figure \ref{8lregions}).

Such a triangle $A$ is subdivided into 6 or 7 {\it Q-regions} $A_i$ by the conics $Q_i=Q_{0,i}$, $i=1,\dots,6$,
that connect quintuples of points of $\P_0$, see Figure \ref{adjgraph6}.
\begin{figure}[h!]
  \centering
    \subfigure[Six $Q$-regions]{\scalebox{0.4}{\includegraphics{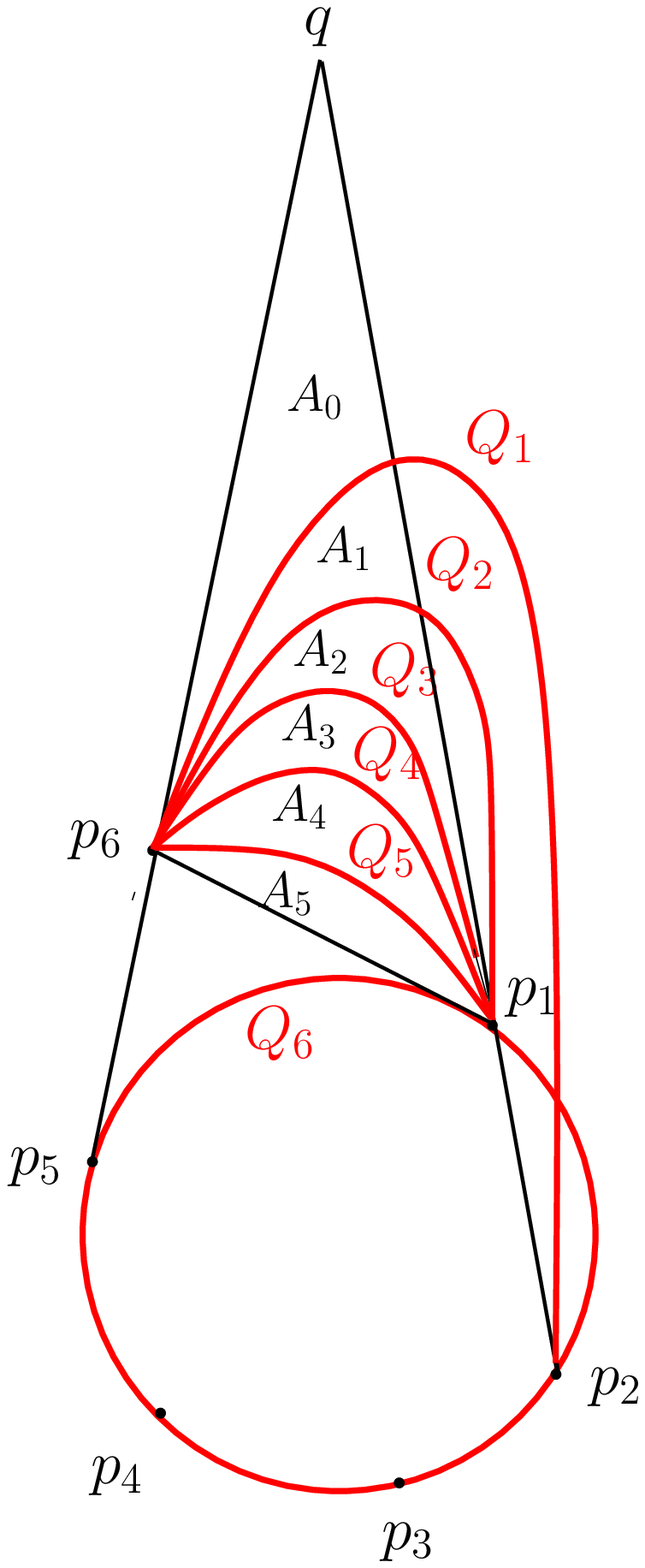}}}\hspace{2cm}
  	\subfigure[Seven $Q$-regions]{\scalebox{0.4}{\includegraphics{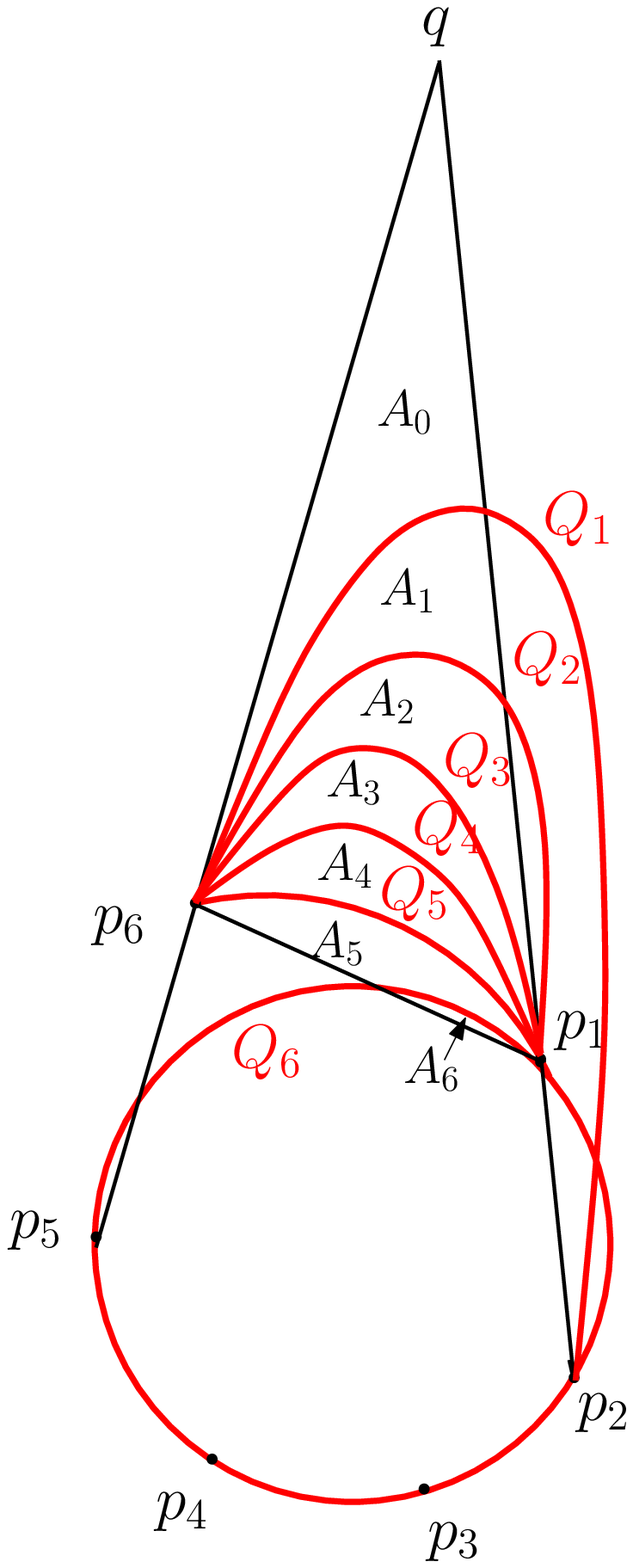}}}
   \caption{A triangle of type $A$ is divided by 6 conics $Q_{i}$ into 6 or 7 Q-regions,
$A_{i}$. For $i=1,\dots 5$, region $A_{i}$ is bounded by conics $Q_{i}$ and $Q_{i+1}$. $A_{0}$ and $A_{6}$ lie
respectively outside of $Q_{1}$ and inside $Q_{6}$.}
\label{6and7regions}
\end{figure}
The only one of these Q-regions that can be collapsed is $A_6$, so,
the monodromy group $\Aut_Q(\P_0)\cong\DD_3$ acts
on the Q-regions of types $A_i$, $0\le i <6$ and clearly, forms
6 orbits, denoted respectively by $[A_i]$.
Our choice of $p_0$ with $d(p_0)=6$ means that this point lies inside region $A_0$.
Transitivity of $\DD_3$-action on the Q-regions of type $A_0$
and impossibility for such a region to be collapsed in the
process of a Q-deformation implies connectedness of $\QC{7}{(7,0,0,0)}$.

\begin{remark}
 Placing point $p_0$ in a Q-region $A_i$, $0< i\le6$, instead of $A_0$ lead to
a new canonical cyclic ordering of the points of $\P$ (different from
 $p_0,\dots,p_6$). To recover that order, it is sufficient to know
 the two points, of dominance indices
 $0$ and $6$.
 Table \ref{table} shows how this pair of points depend on the region $A_i$.
\begin{table}[h]
\caption{The indices $d(p_j)$ in case of $p_0\in A_i$}
\centering
\label{table}
\scalebox{1}{
\begin{tabular}{c|c|c|c|c|c|c|c}
\hline%
Location of $p_0$ & $A_6$ & $A_5$ & $A_4$ & $A_3$ & $A_2$ & $A_1$ & $A_0$\\
\hline
The point of $\P$ with $d=6$  & $p_6$ & $p_6$ & $p_4$ & $p_4$ & $p_2$ & $p_2$ & $p_0$\\
\hline
The point of $\P$ with $d=0$ & $p_0$ & $p_5$ & $p_5$ & $p_3$ & $p_3$ & $p_1$ & $p_1$\\
\hline
\end{tabular}}
\end{table}
\end{remark}

\section{Q-deformation classification of hexagonal $7$-configurations}\label{hex}

\subsection{General scheme of arguments: subdivision of L-polygons into Q-regions}
In all the cases we follow the same scheme of Q-deformation classification as for
heptagonal configurations in Subsection \ref{heptagonal-Qdeformations}.
Namely, we consider a typical 7-configuration with a marked point $\P=\P_0\cup\{p_0\}$, so that
 $\P_0\in\QC61$ (a cyclic 6-subconfiguration).
In this section we assume that $p_0$ lies inside hexagon $\G_{\P_0}$, which corresponds to the case
of hexagonal configuration $\P$ (see Subsection \ref{dual}).
For a given $\P$ the number of such choices of $p_0$ is equal to $f_6(\P)$
(recall that an affine chart $(\Rp2\sm\ell)\supset\P$
in which the convex hull of $\P$ is hexagonal corresponds in the dual terms to a choice of point $\hat\ell$
inside a hexagonal component of $\widehat{\Rp2}\sm\widehat{\P}$ for the dual arrangement $\widehat{\P}$).
Since in our case $f_6(\P)=1$
we conclude that a choice of marked point $p_0$ is unique.

In the next Subsection we consider $p_0$ lying outside $\G_{\P_0}$ in one of L-polygons that correspond to
pentagonal configurations (so, we exclude previously considered cases of heptagonal and hexagonal 7-configurations).

Conics $Q_i$ passing through the points of 5-subconfigurations $\P_0\sm\{p_i\}$, $i=1,\dots,6$,
can subdivide an L-polygon into several {\it Q-regions} like in Subsection \ref{heptagonal-Qdeformations},
and our aim is to analyze which of these regions cannot be contracted in a process of Q-deformation,
and how the monodromy group $\Aut_Q(\P_0)$ does act on them.

We always choose a cyclic order of points
$p_1,\dots,p_6\in\P_0$ so that $p_1$ is dominant (then $p_3$ and $p_5$ are dominant too, whereas $p_2$, $p_4$, $p_6$ are subdominant).
Then
conics $Q_2$, $Q_4$, and $Q_6$ contain hexagon $\G_{\P_0}$ inside,
whereas $Q_1$, $Q_3$, and $Q_5$ intersect the internal L-polygons of $\P_0$, see
Figure~\ref{qregions}.
\begin{figure}[h!]
\centering
{\scalebox{0.4}{\includegraphics{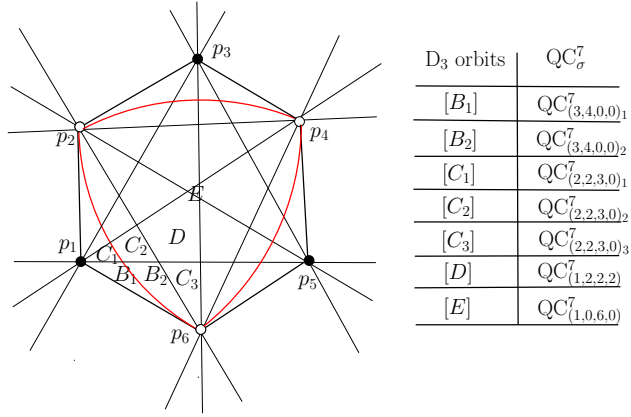}}}
\caption{The seven $\DD_3$-orbits on internal $Q$-regions associated to a hexagonal configuration in $QC^{7}$}
\label{qregions}
\end{figure}
\begin{remark}
On Figure \ref{qregions} conics $Q_1$, $Q_3$, $Q_5$ do not intersect the sides of the hexagon.
If they do intersect, then the shape of Q-regions of types $B_1$ and $B_2$ may change,
which changes Figure \ref{qregions} a bit, but not essentially for our arguments.
\end{remark}

\subsection{$\DD_3$-orbits}
The internal L-polygons of types $D$ and $E$ are obviously contained inside these conics and only
L-polygons of types $B$ and $C$ are actually subdivided into Q-regions.
Namely, the latter L-polygons are subdivided into Q-regions $B_1$, $B_2$ and respectively
$C_1$, $C_2$, $C_3$ as it is shown.
Next, we can easily see that
monodromy group $\Aut_Q(\P_0)\cong\DD_3$ acts transitively on the Q-regions of each type, which
gives seven $\DD_3$-orbits: $B_1$, $B_2$, $C_1$, $C_2$, $C_3$, $D$ and $E$.

\subsection{Deformation classification in the cases of non-collapsible Q-regions}\label{non-collabsible}
Note that triangle $E$ is the only internal Q-region of ${\P_0}$ that can be collapsed by a Q-deformation.
Thus, any pair of hexagonal $7$-configurations $\P'$ and $\P''$ whose marked points, $p_0'$ and $p_0''$ belong
to the same $\DD_3$-orbit of the internal Q-regions different from $E$ can be connected by
a Q-deformation. Namely, we start with a Q-deformation between $\P'_0=\P'\sm\{p_0'\}$ and $\P''_0=\P''\sm\{p_0''\}$
that transforms the Q-region containing $p_0'$ to the one containing $p_0''$
and extend this deformation to the seventh points using non-contractibility of the given type of Q-regions.
This yields Q-deformation classes $\QC7{(3,4,0,0)_1}$, $\QC7{(3,4,0,0)_2}$, $\QC7{(2,2,3,0)_1}$, $\QC7{(2,2,3,0)_2}$,
$\QC7{(2,2,3,0)_3}$, and $\QC7{(1,2,2,2)}$ that correspond respectively to the $\DD_3$-orbits of types
$B_1$, $B_2$, $C_1$, $C_2$, $C_3$, and $D$, see Figure \ref{qregions}.

\subsection{The case of Q-region $E$}\label{collapsibleE}
Consider a subset $\widetilde{\LC61}\subset\LC61$ formed by typical hexagonal 6-configurations $\P=\{p_1,\dots,p_6\}$
whose {\it principal diagonals} $p_1p_4$, $p_2p_5$, and $p_3p_6$ are not concurrent, or in the other words, whose
L-polygon $E$ is not collapsed.
Connectedness of space $\QC7{(1,0,6,0)}$ formed by 7-configurations $\P=\P_0\cup\{p_0\}$ with $p_0$
placed in the Q-region $E$ would follow from connectedness of $\widetilde{\LC61}$.

\begin{proposition}\label{specialqdeformation}
Space $\widetilde{\LC61}$ is connected.
\end{proposition}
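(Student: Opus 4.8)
The plan is to realize $\widetilde{\LC61}$ as the space obtained from the connected space $\LC61$ by removing the codimension-one ``wall'' $W$ on which the three principal diagonals $p_1p_4$, $p_2p_5$, $p_3p_6$ become concurrent, and to argue that removing this wall does not disconnect the space. Concretely, I would use the description of $\LC61$ from Subsection~\ref{LGC6}: a cyclic $6$-configuration is, up to L-deformation and projective transformation, a convex hexagon, and the concurrency condition $E=\varnothing$ is a single real-analytic equation in the (finite-dimensional) parameter space of such hexagons. So $\widetilde{\LC61}=\LC61\setminus W$ with $W$ of codimension $\ge 1$; the issue is only whether $W$ locally separates $\LC61$ into two sides, and if so whether those two sides are nevertheless joined elsewhere.

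**First I would** set up coordinates: fix the affine chart in which $\P$ is a convex hexagon, normalize by a projective transformation (say put $p_1,p_2,p_3$ at standard position), and write $D(\P)$ for the $3\times 3$ determinant whose vanishing expresses concurrency of the three principal diagonals. The sign of $D(\P)$ distinguishes the two types of non-collapsed $E$: the central triangle $E$ can be traversed by a principal diagonal in two combinatorially distinct ways (which of the three diagonals ``misses'' the opposite side), and these correspond to $D>0$ and $D<0$. **The key step** is then to exhibit, for each of the two sides, a path inside $\widetilde{\LC61}$ connecting an arbitrary configuration on that side to a fixed base configuration, or — more cleverly — to exhibit a single closed loop in $\LC61$ that crosses $W$ transversally and whose complement-in-$\LC61$ monodromy is realized by an L-deformation; equivalently, to show the two local sides of $W$ at a generic wall point are connected by an arc in $\LC61\setminus W$ going ``around'' the codimension-$\ge 2$ stratum where two of the walls meet. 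Since $\LC61$ is an open semialgebraic set and $W$ is a hypersurface, standard transversality/general-position arguments reduce the question to the local structure of $W$, and then to finding the ``turning point'' — a configuration where $W$ has higher codimension — around which one can pass.

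**The cleanest route**, which I would try first, is to use the monodromy group $\Aut_L(\P)=\DD_6$ acting on $\LC61$ (Subsection~\ref{monodromy}): the cyclic rotation $r\in\DD_6$ by one vertex permutes the three principal diagonals cyclically and therefore preserves $W$ but not its orientation data, while the rotation $r^2$ (or $r^3$) may reverse the sign of $D$. If some element of $\DD_6$, realized by an L-deformation of $\P$, sends a configuration with $D>0$ to one with $D<0$, then the two sides of $W$ are L-deformation equivalent and connectedness of $\widetilde{\LC61}$ follows from connectedness of $\LC61$ together with the fact that the deformation realizing that monodromy element can be chosen to avoid $W$ (it comes from a loop in $\LC61$, which we may perturb off the hypersurface $W$ by general position, since dim$\,\LC61\ge 2$). **The main obstacle** I anticipate is precisely verifying this last sign-change claim: it requires either an explicit small computation with the determinant $D$ under the cyclic relabeling (checking that relabeling $p_i\mapsto p_{i+1}$ negates $D$, which is a short antisymmetry argument), or, if that symmetry turns out to preserve the sign, a separate hands-on construction of an explicit L-deformation through convex hexagons that starts with the diagonals concurrent-avoiding on one side, sweeps one vertex across the appropriate L-polygon, and lands on the other side — all while keeping the hexagon convex and the other two diagonals non-degenerate. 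I would expect the authors to take the explicit-path route (consistent with the ``more subtle version'' of the $F$-argument promised just before the proposition), so I would be prepared to draw the picture of moving one vertex of the hexagon along a controlled arc and track the three diagonals, rather than rely on an abstract transversality statement.
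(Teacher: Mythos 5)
Your plan is genuinely different from the paper's, and as written it has a real gap: the one step on which everything hinges --- showing that the two local sides of the concurrency wall $W$ lie in the same component of the complement --- is not actually carried out, but deferred to one of two possible mechanisms, and both mechanisms as described are problematic. First, for an unordered configuration the ``sign of $D$'' is not well defined: it depends on an ordering of the three diagonals and on a choice of covector representatives, so before any monodromy argument can start you would have to replace it by an honest locally constant combinatorial invariant (e.g.\ on which side of $p_ip_{i+3}$ the triangle $E$ lies) and determine whether it is genuinely nonconstant on $\widetilde{\LC61}$. Second, your proposed ``short antisymmetry argument'' goes the wrong way: the relabeling $p_i\mapsto p_{i+1}$ permutes the three diagonals \emph{cyclically}, an even permutation, so it preserves rather than negates the determinant. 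Third, and most seriously, the general-position step ``perturb the loop off the hypersurface $W$'' is invalid precisely because $W$ has codimension one: a path whose endpoints lie on opposite sides of a separating hypersurface cannot be pushed off it, and whether $W$ separates is exactly the question being asked, so the argument is circular. (A smaller omission: $\widetilde{\LC61}$ also excludes the coconic configurations $\QD6$, a second wall your reduction to ``$\LC61$ minus one hypersurface'' ignores; one should work inside $\QC61$, whose connectedness is Subsection~\ref{6confs}.)

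The paper avoids all wall-crossing analysis. Its proof first uses connectedness of $PGL(3,\R)$ to deform any $\P\in\widetilde{\LC61}$ so that its central triangle becomes a fixed triangle $\XYZ$ and the dominant and subdominant vertices sit on a prescribed alternating pattern of the six rays extending the sides of $\XYZ$ (a finite normalization of the discrete data, absorbed by a further projective transformation permuting the diagonals). It then proves that the resulting normalized space $\QC6{E,\XYZ}$ is connected by identifying a configuration with a pair $(Q,p)$, where $Q$ is the conic through the five points other than the dominant point $p=p_1$: the space of ellipses containing $\XYZ$ is contractible and the admissible locus of $p$ on its ray is an interval, so the total space is connected. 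In this normalization the wall never appears (the triangle $E$ is held fixed and cannot collapse), which is exactly what makes the argument clean; if you want to pursue your route you would, in effect, have to reprove this normalization statement to control the local structure of $W$, at which point you have reconstructed the paper's proof.
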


\begin{proof}
Given a pair of configurations, $\P^{i}\in \widetilde{\QC6{1}}$, $i=0,1$,
we need to connect them by some deformation $\P^{t}\in \widetilde{\QC6{1}}$, $t\in[0,1]$.
Let us choose a cyclic numeration of points, $p^{i}_{1},\dots,p^{i}_{6}\in\P^{i}$, $i=0,1$, so that
$p^{i}_1$ are dominant.
 At the first step, we can achieve that the triangular Q-regions ``E''
 of the both configurations coincide, so that
 the dominant and subdominant points in $\P^i$ go in the same order, as it is shown on Figure \ref{centraltrianglesame}(a),
that is, points $p_1^i\in\P^i$, $i=0,1$, lie on the
ray that is the extention of side $XY$ (then the other rays extending the sides if triangle $XYZ$
are also of the same color).

\begin{figure}[h!]
\centering
\subfigure[(a)\qquad\qquad\ ]{\scalebox{0.35}{\includegraphics{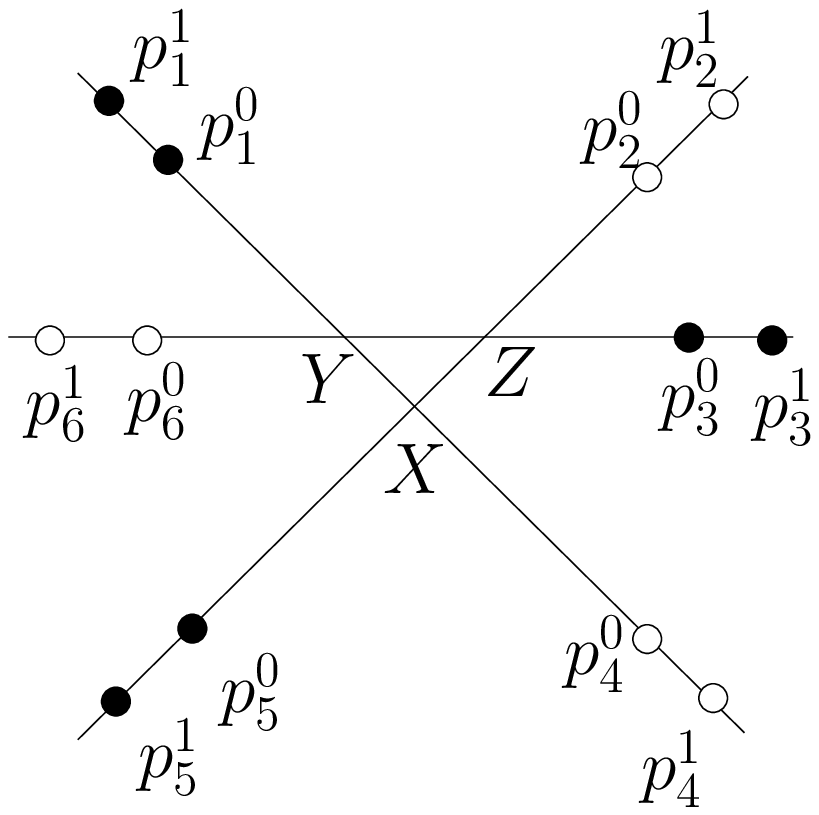}}\qquad\qquad}
\subfigure[\qquad\ \ (b)\qquad]{\scalebox{0.4}{\includegraphics{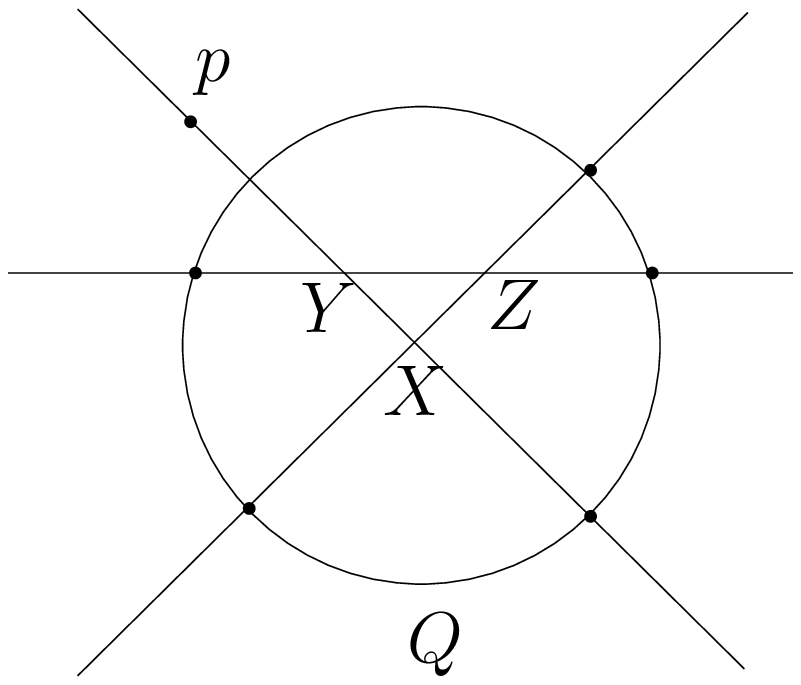}}}
\caption{(a) The affine hexagons  $\P^0$ and $\P^1$ have common principal diagonals and
the dominant point $p_1^i$, of $\P^i$, $i=0,1$, lie on the same continuation of side $XY$.
Then the other dominant (subdominant) points $p_k^i\in\P^i$ (being numerated in the same direction)
lie also in an alternating way
on the corresponding extensions of the sides of $XYZ$
(here the mutual position of each pair $p_k^0$ and $p_k^1$, $k=1,\dots,6$, on the corresponding ray is not essential).
\newline(b) Conic $Q$ containing triangle $\XYZ$ inside and a point $p$ outside.}
\label{centraltrianglesame}
\end{figure}

This can be done by a projective transformation sending the diagonals $p_1^0p_4^0$, $p_2^0p_5^0$, $p_3^0p_6^0$,
and the {\it infinity line}, $L_\infty$, (that pass in the complement of $\G_{P^0}$) to the corresponding diagonals and the ``infinity line''
for configuration $\P^1$
(existence of a deformation is due to connectedness of $PGL(3,\R)$).
If the mutual positions of the dominant and subdominant points on the lines in $\P^0$ and $\P^1$ will differ,
then it can be made like on Figure \ref{centraltrianglesame}(a) by a projective transformation that permutes
the three diagonals while preserving $L_\infty$.
(One can also use flexibility of the initial numeration of vertices in $\P^1$).

Fixing a triangle $\XYZ\subset \R^{2}=\Rp2\sm L_{\infty}$,
let us denote by $\QC{6}{E,\XYZ}$ the subspace of $\QC{6}{1}$ consisting of hexagonal $6$-configurations
whose dominant points lie on the affine rays that are continuations of sides $XY$, $YZ$, and $ZX$, and subdominant points
lie on the continuations of $YX$, $ZY$, and $XZ$, as it is shown on Figure \ref{centraltrianglesame}(a).

The final step of the proof is connectedness of $\QC{6}{E,\XYZ}$.

\begin{lemma} \label{connect}
For a fixed triangle
$\XYZ\subset\R^2=\Rp2\sm L_{\infty}$, the configuration space $\QC{6}{E,\XYZ}$ is connected.
\end{lemma}
\begin{proof}
Consider $\P\in\QC{6}{E,\XYZ}$, $\P=\{p_1,\dots,p_6\}$, where
point $p_1$ is dominant one lying on $XY$.
Consider conic $Q$ passing trough $p_2,\dots,p_6$.
Triangle $\XYZ$ lies inside $Q$ and point $p_1$ lies outside.
This gives a one-to-one correspondence between $\QC{6}{E,\XYZ}$ and the space of pairs $(Q,p)$, where
$Q$ is an ellipse containing $XYZ$ inside and $p$ is a point on the continuation of $XY$ lying outside $Q$
(see Figure \ref{centraltrianglesame}(b)).
 The space of such ellipses is connected (and in fact, contractible), and the projection of $\QC{6}{E,\XYZ}$
to this space is fibration with a contractible fiber. Thus, $\QC{6}{E,\XYZ}$ is connected (and in fact, is contractible).
\end{proof}
\vskip-3mm
\end{proof}
\subsection{Decoration of the adjacency graphs for hexagonal typical 7-configurations}\label{decoadj}
Figure \ref{QHEXED7} shows the adjacency graphs of typical hexagonal configurations $\P$ endowed
\begin{figure}[h!]
  \centering
  \subfigure[$QC^{7}_{(3,4,0,0)_1}$]{\scalebox{0.33}{\includegraphics{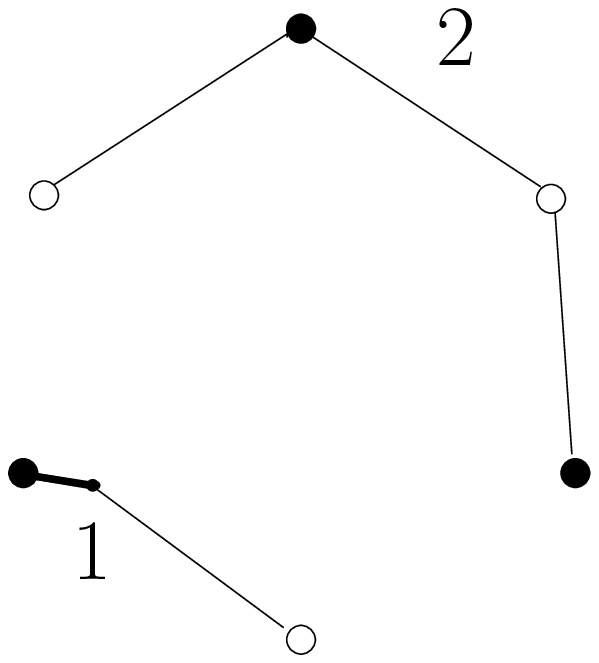}}}\hspace{1cm}
	\subfigure[$QC^{7}_{(3,4,0,0)_2}$]{\scalebox{0.33}{\includegraphics{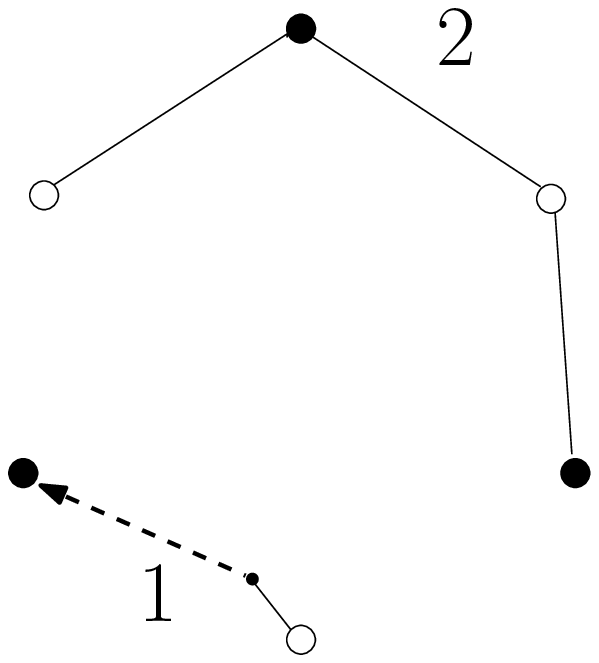}}}\\
  \subfigure[$QC^{7}_{(2,2,3,0)_1}$]{\scalebox{0.33}{\includegraphics{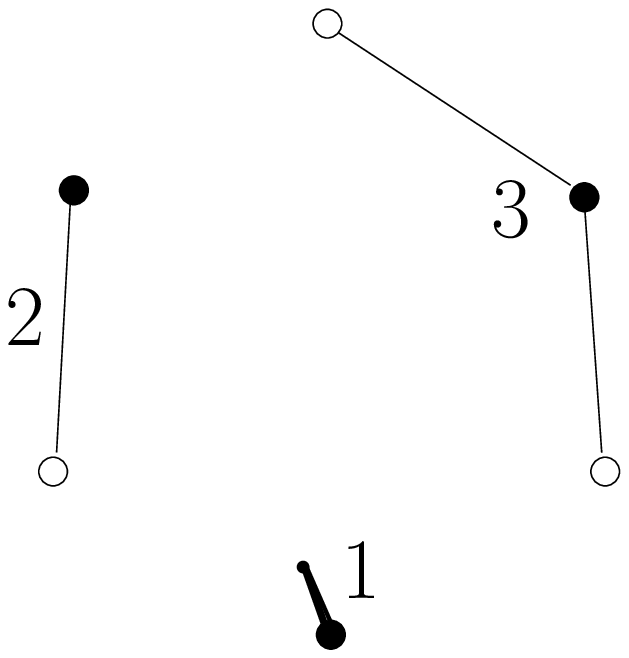}}}\hspace{1cm}
	\subfigure[$QC^{7}_{(2,2,3,0)_2}$]{\scalebox{0.33}{\includegraphics{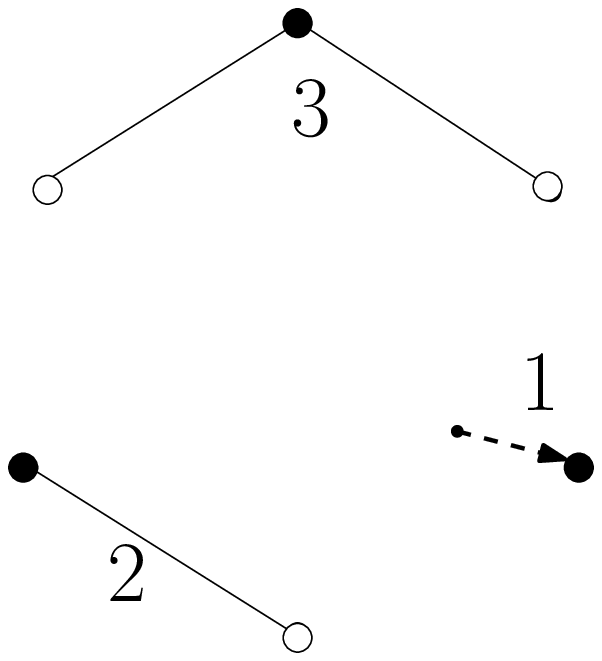}}}\hspace{1cm}
	\subfigure[$QC^{7}_{(2,2,3,0)_3}$]{\scalebox{0.33}{\includegraphics{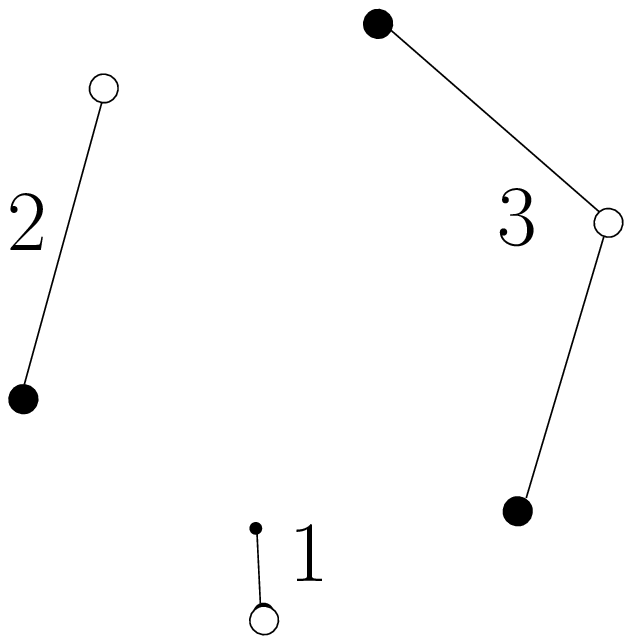}}}\\
  \subfigure[$QC^{7}_{(1,2,2,2)}$]{\scalebox{0.33}{\includegraphics{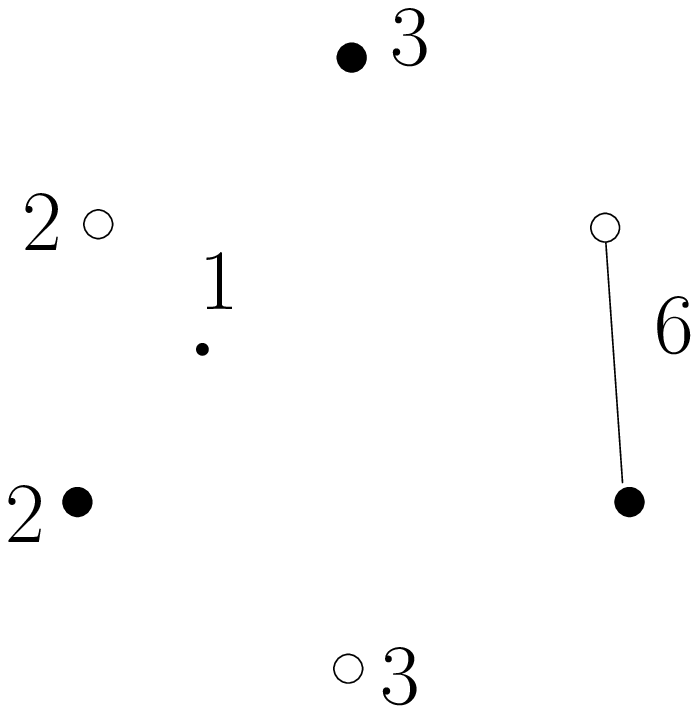}}}\hspace{1cm}
	\subfigure[$QC^{7}_{(1,0,6,0)}$]{\scalebox{0.33}{\includegraphics{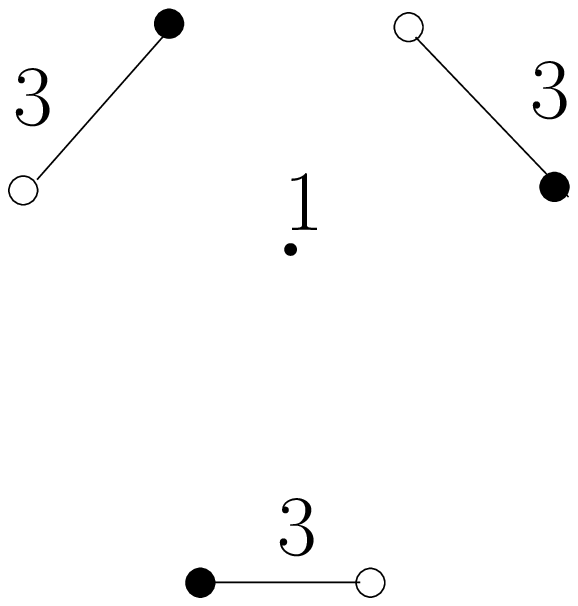}}}
	\caption[$Q$-deformation classes of hexagonal $7$-configurations]{$Q$-deformation classes of hexagonal $7$-configurations}
	\label{QHEXED7}
\end{figure}
additionally with the {\it vertex coloring} for $\P_0=\P\sm\{p_0\}$
as in Subsection \ref{QGC} (black for dominant and white for subdominant points) and with
the {\it edge decoration} for a connected component of $\G_{\P}$ labeled by $1$.
Namely, such an edge $[p_ip_j]$ is {\it thin} if $p_i$ and $p_j$ lie inside conic $Q_{ij}$,
{\it thick} if they lie outside, or {\it dotted and directed}
from $p_i$ to $p_j$ if $p_i$ lies inside and $p_j$ lies outside
(like is shown on Figure \ref{edecoration}).
Such decoration lets us distinguish Q-deformation types of hexagonal configurations.

\subsection{Coloring of vertices in the case of pentagonal 7-configurations with $\s_1>0$}
Recall that a pentagonal typical 7-configuration $\P$ has precisely $\s_1$
vertices $p\in\P$ such that $\P_0=\P\sm{p}\in\QC61$. Moreover, for pentagonal configurations $\s_1$ is either $0$ or $1$
(see Table 1). So, in the case $\s_1>0$ (that is $\s_1=1$) 
considered in the next section such a vertex $p$ is unique, and
we can (and will) color the six vertices of $\P_0$ according to their dominancy as before.

\section{Pentagonal $7$-configurations, the case of $\sigma_{1}>0$}\label{pen}

\subsection{$\DD_3$-orbits of types $G$ and $H$}\label{G-H}

A pentagonal typical $7$-configuration with $\sigma_{1}>0$ (and thus, $\sigma_1=1$)
can be presented like in the previous section
as $\P=\P_0\cup\{p_0\}$, where $P_0\in\QC61$. The difference is that now point $p_0$ lies outside hexagon $\G_{\P_0}$.
More precisely, $p_0$ should lie in an L-polygon of type $F$, or $G$, or $H$, since the other types of L-polygons
correspond either to heptagonal (the case of L-polygons of type $A$) or hexagonal (the case of types $I$ and $J$) configurations
$\P$ that were analyzed before.

The first crucial observation is that none of the conics $Q_i$, $i=1,\dots,6$ can intersect these three types of external polygons, and therefore, such L-polygons are not subdivided into Q-regions like in the case of types $A$, $B$ and $C$.
It is clear from Figure \ref{pentagonregions}: conics $Q_i$ should lie in the shaded part that is formed by
L-polygons of types $A$, $I$ and $J$.
\begin{figure}[h!]
\centering
\subfigure[]{\scalebox{0.24}{\includegraphics{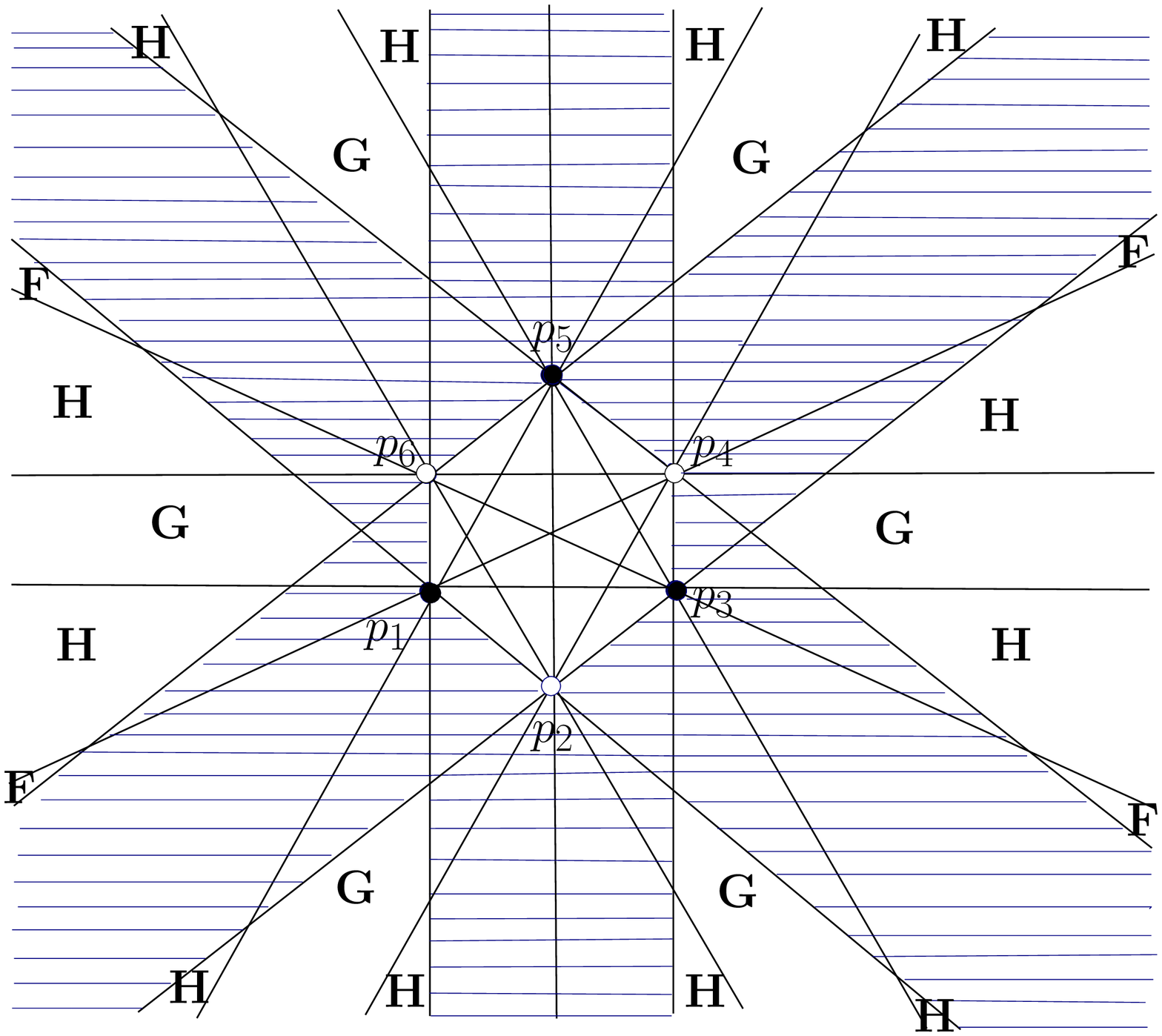}}}\;\;
\subfigure[]{\scalebox{0.3}{\includegraphics{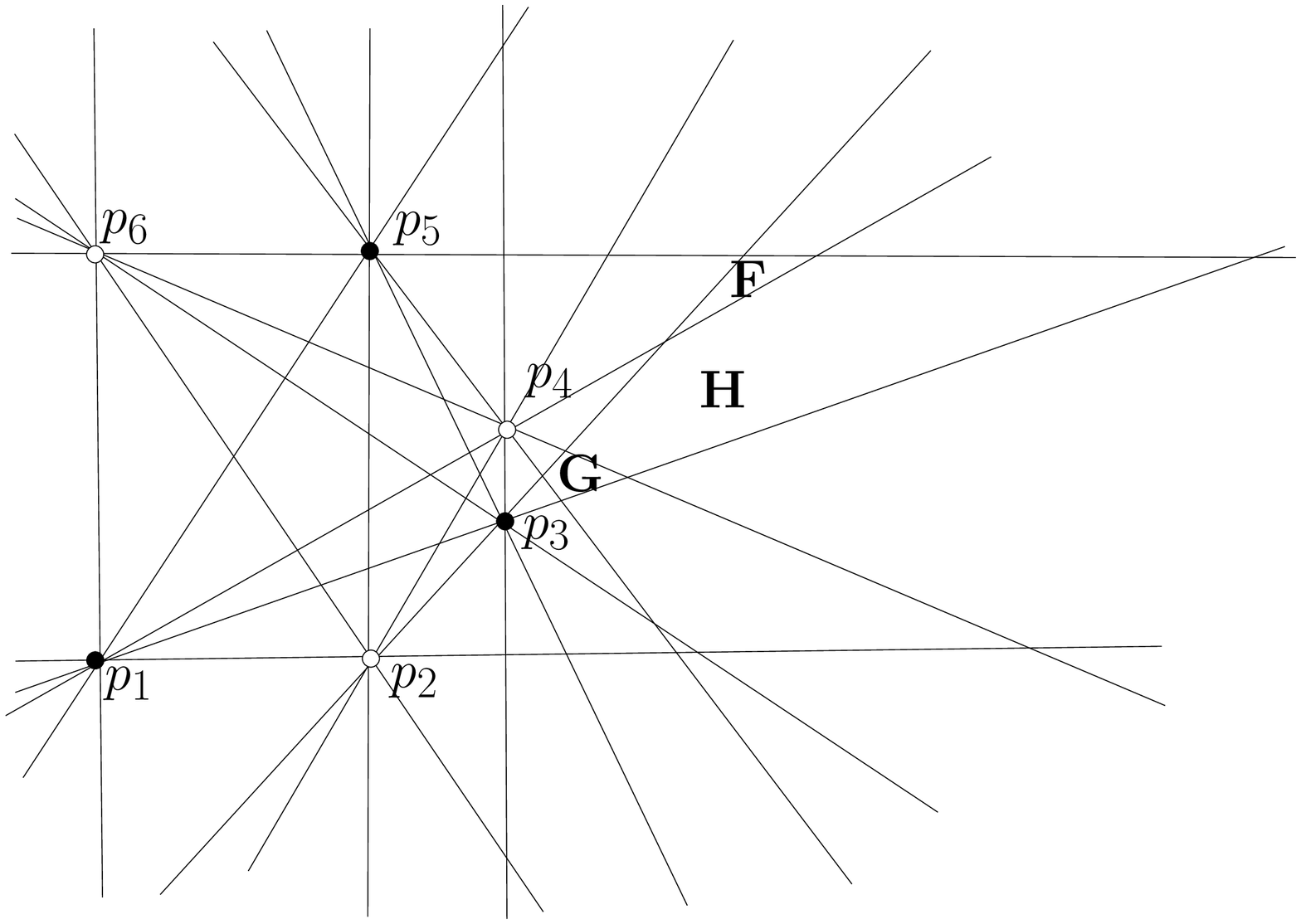}}}
\caption
{External $Q$-regions representing pentagonal $7$-configurations with $\sigma_{1}=1$.
$\DD_3$-action is clear on the left, and the shape of L-polygons $G$, $H$, $F$ on the right.}
\label{pentagonregions}
\end{figure}

The second observation is that L-polygons of each type, $F$, $G$, or $H$, form a single orbit with respect to
the action of monodromy group $\Aut_Q(\P_0)=\DD_3$.

The third evident observation is that L-polygons of types $G$ and $H$ cannot be contracted by a Q-deformation
(as they cannot be contracted even by an L-deformation).
 Together these observations imply
that the corresponding to L-polygon types $G$ and $H$
(see Table~\ref{spectrandcodes}) configuration spaces $\QC7{(1,4,2,0)}$ and $\QC7{(1,2,4,0)}$
are connected, and thus, are Q-deformation components.

\subsection{The case of L-polygons of type $F$}\label{F-polygons}
\begin{proposition}\label{F-connectedness}
The configuration space $\QC{7}{(1,6,0,0)}$ is connected, or equivalently, L-deformation component $\LC7{(1,6,0,0)}$
that correspond to L-polygon of type $F$ contains a unique Q-deformation component.
\end{proposition}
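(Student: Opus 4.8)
The plan is to imitate, step for step, the treatment of the collapsible central triangle $E$ in Proposition~\ref{specialqdeformation} and Lemma~\ref{connect}. Write $\P=\P_0\cup\{p_0\}\in\QC{7}{(1,6,0,0)}$ with $\P_0\in\QC{6}{1}$ its unique cyclic $6$-subconfiguration, numerated cyclically so that $p_1,p_3,p_5$ are dominant, and with $p_0$ in an L-polygon of type $F$. By the first observation of Subsection~\ref{G-H}, none of the conics $Q_i$ meets a type-$F$ L-polygon, so such an L-polygon is not subdivided into Q-regions; moreover the three of them form a single orbit of $\Aut_Q(\P_0)\cong\DD_3$. Realizing this $\DD_3$-action by Q-deformations of $\P_0$ (routing the supporting loops near a configuration with a $\DD_3$-symmetry, so that all three type-$F$ triangles stay nondegenerate and $p_0$ can be carried along inside its own triangle) reduces the problem to connecting any two configurations whose seventh point lies in one fixed copy $F_0=F_0(\P_0)$ of this L-polygon, say the triangle bounded by the principal diagonal $p_1p_4$ and the two sides $p_2p_3$, $p_5p_6$ of $\G_{\P_0}$.

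Next I would linearize. Let $\widetilde{\QC{6}{1}}\subset\QC{6}{1}$ be the locus where the lines $p_1p_4$, $p_2p_3$, $p_5p_6$ are not concurrent, i.e. $F_0(\P_0)$ is a genuine triangle. Then $(\P_0,p_0)\mapsto\P_0$ exhibits the space of configurations surviving the first step as a bundle over $\widetilde{\QC{6}{1}}$ with fibre the open triangle $F_0(\P_0)$, which is contractible; so everything comes down to connectedness of $\widetilde{\QC{6}{1}}$. For that, since $PGL(3,\R)$ is connected, normalize projectively so that the triangle cut out by $p_1p_4$, $p_2p_3$, $p_5p_6$ is a fixed triangle $\XYZ$ and the line at infinity is also fixed; this reduces us to a space $\QC{6}{F,\XYZ}$ of $6$-configurations with $p_1,p_4$ on line $XY$, $p_2,p_3$ on $YZ$, $p_5,p_6$ on $ZX$, forming a convex hexagon of the prescribed cyclic type with the prescribed dominance pattern (equivalently, admitting no coconic sextuple and having the prescribed separation from the conics $Q_i$). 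I would prove $\QC{6}{F,\XYZ}$ connected exactly as $\QC{6}{E,\XYZ}$ was handled in Lemma~\ref{connect}: the conic $Q_1$ through $p_2,\dots,p_6$ is smooth and meets $YZ$ in $\{p_2,p_3\}$, $ZX$ in $\{p_5,p_6\}$ and $XY$ in $p_4$ (plus one further point), so the configuration $\P_0$, modulo the position of the dominant point $p_1$, is equivalent to a conic $Q$ in the appropriate position relative to $\XYZ$, while $p_1$ then runs over an interval on $XY$ exterior to $Q$; the base space of such conics is connected and the fibre is contractible, whence connectedness.

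The main obstacle is precisely the collapsibility of $F$: this is why $\QC{7}{(1,6,0,0)}$, unlike the orbits $G$ and $H$ of Subsection~\ref{G-H}, cannot be settled by bare transitivity of $\Aut_Q(\P_0)$ together with non-contractibility. One must check that the wall where $p_1p_4$, $p_2p_3$, $p_5p_6$ are concurrent does not disconnect $\QC{6}{1}$ from the point of view relevant here — concretely, that $\widetilde{\QC{6}{1}}$ is connected — and, correspondingly, that the $\DD_3$-monodromy loops used in the first step can be chosen to keep the pertinent type-$F$ triangles nondegenerate. The projective normalization to the fixed triangle $\XYZ$ is exactly what removes the collapse from the picture — just as with $E$ in Lemma~\ref{connect} — and leaves only the routine verification that the finite list of open conditions (noncollinearity of triples, non-coconicity, convexity, dominance) persists along the deformation; that verification is the one place where genuine bookkeeping is required.
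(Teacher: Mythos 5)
Your proposal follows essentially the same route as the paper: reduce to a single type-$F$ triangle, normalize it projectively to a fixed triangle $\XYZ$, and prove connectedness of the resulting space $\QC{6}{F,\XYZ}$ by parametrizing configurations by a pair $(Q,p_1)$ with $Q$ the conic through $p_2,\dots,p_6$ in a prescribed position relative to $\XYZ$ and $p_1$ ranging over an interval on the line $XY$, exactly as in the paper's Lemma~\ref{F-connect}. The only differences are immaterial ones of bookkeeping (the order in which you fibre and normalize, and a harmless $X\leftrightarrow Y$ relabeling of which sides carry $p_2,p_3$ versus $p_5,p_6$).
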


\begin{proof}
A configuration $\P\in\QC{7}{(1,6,0,0)}$ has a unique distinguished point $p_0$, such that
$\P=\P_0\cup\{p_0\}$, where $\P_0\in\QC61$ and $p_0$ lies in the L-polygon of type $F$.
Such polygon is a triangle whose vertices we denote by $X$, $Y$, and $Z$ using the following rule.
By definition of $F$-type polygon,
one of its supporting lines should be a principal diagonal passing through two opposite vertices
of hexagon $\G_{\P_0}$. We can choose a cyclic numeration of points
$p_1,\dots, p_6\in\P$ so that these opposite vertices are $p_1$ and $p_4$, and $p_1$ is a dominant point (then $p_3$, $p_5$
are also dominant, and $p_2$, $p_4$, $p_6$ are subdominant).
Two vertices of the triangle on the line $p_1p_4$ are denoted by $X$ and $Y$ in such an order that
$X$, $Y$, $p_1$, $p_4$ go consecutively on this line, like it is shown on Figure \ref{xyz}(a), and the third point
of the triangle is denoted by $Z$.
 The direction of cyclic numeration of points $p_i$ can be also chosen so that points
$p_2$, $p_3$ lie on the line $XZ$ and $p_5$, $p_6$ on $YZ$ (see Figure \ref{xyz}(a)).
\begin{figure}[h]
\centering
\subfigure[(a)\ \ \qquad\qquad\qquad]{\scalebox{0.3}{\includegraphics{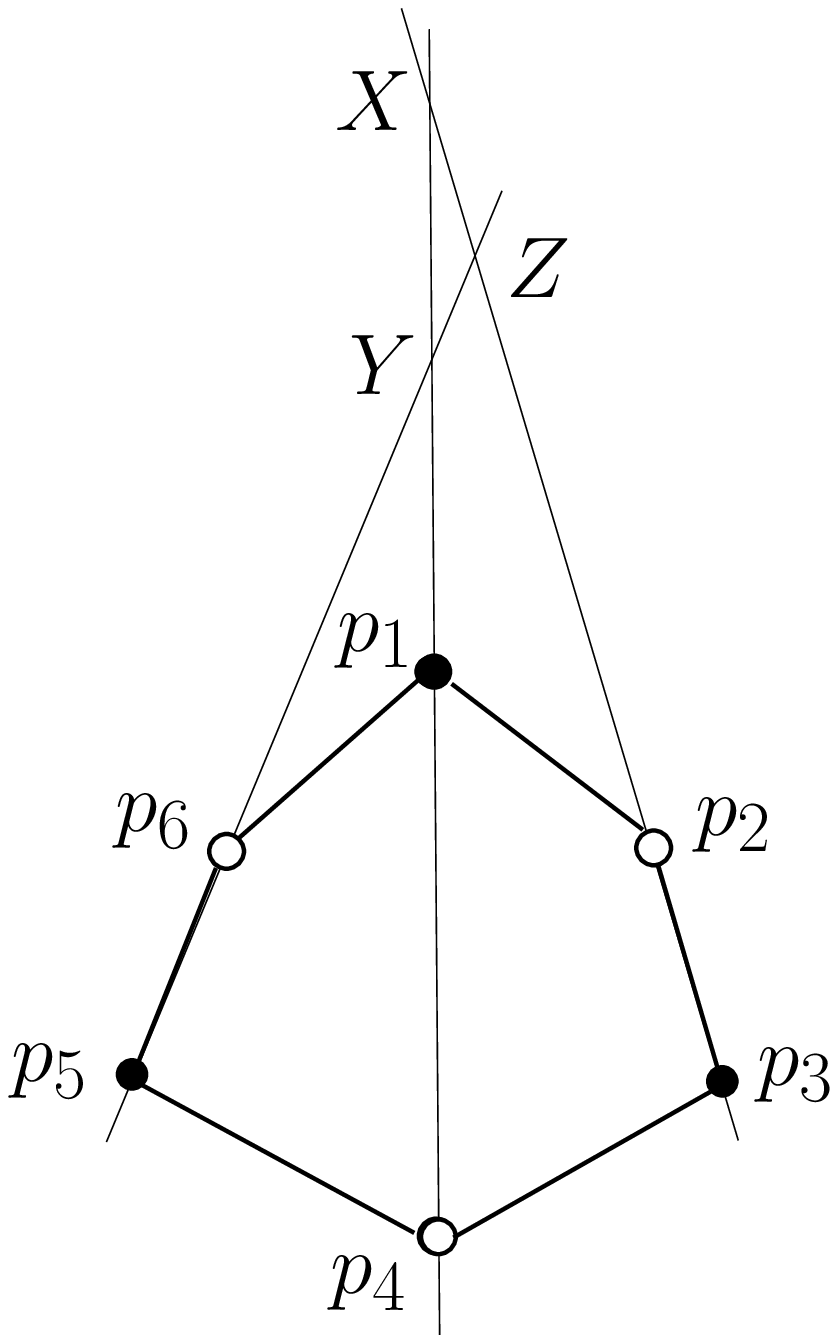}}\qquad\qquad\qquad}
\subfigure[(b)\ ]{\scalebox{0.3}{\includegraphics{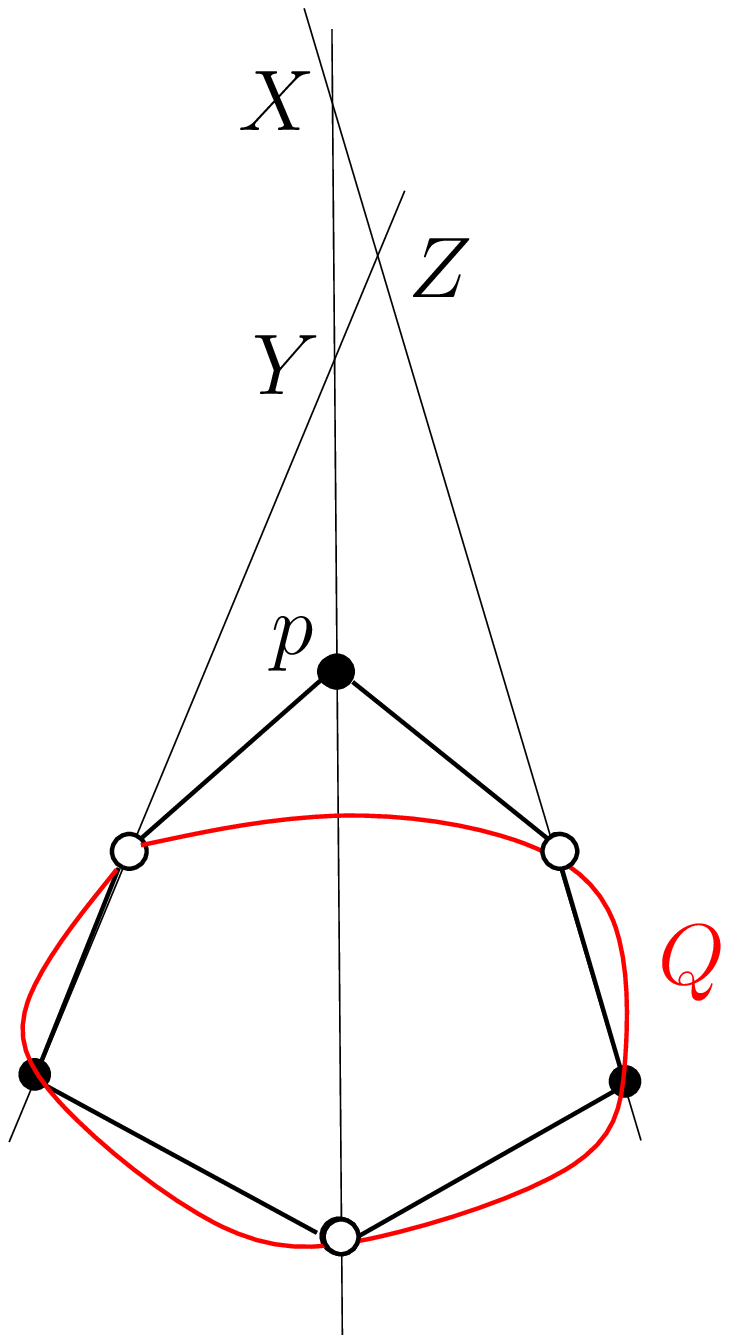}}}
\caption{}
\label{xyz}
\end{figure}

By a projective transformation we can map a triangle $\XYZ$ to any other triangle on $\Rp2$,
so, in what follows we suppose that triangle $\XYZ$ is fixed and denote by $\QC{6}{F,\XYZ}\subset\QC61$
the subspace formed by typical cyclic configurations having $\XYZ$ as its L-polygon of type $F$
and having a cyclic numeration of points $p_1,\dots,p_6\in\P$
satisfying the above convention.

Then, Proposition \ref{F-connectedness} follows from connectedness of $\QC{6}{F,\XYZ}$.

\begin{lemma}\label{F-connect}
For a fixed triangle $\XYZ$, the configuration space  $\QC{6}{F,\XYZ}$ is connected.
\end{lemma}

\begin{proof}
Using the same idea as in Lemma \ref{connect}, we associate with a configuration $\P\in\QC{6}{F,\XYZ}$ a pair
$(Q,p)$, where $p$ is the dominant point of $\P$ on the line $XY$ (that is $p_1$ in the notation used above)
and $Q$ is the conic passing through the other points of $\P$.
 Note that $\P$ can be recovered from pair $(Q,p)$ associated to it in a unique way.
Position of $Q$ can be characterized by the conditions that triangle
$\XYZ$ lie outside $Q$ and the
lines $XY$, $XZ$, $YZ$ intersect conic $Q$ at two points, so that
the chord of conic $Q$ that is cut by line
$XY$ lies between the two other chords that are cut by $XZ$ and $YZ$ (see Figure \ref{xyz}(b)).

The set of conics satisfying these requirements is obviously connected (and in fact, is contractible).
 For each conic $Q$ like this, there is some interval on the line $XY$ (see Figure \ref{xyz})
 formed by points $p$ such that $(Q,p)$ is associated to some $\P\in\QC{6}{F,\XYZ}$.
Thus, the set of such pairs $(Q,p)$, or equivalently $\QC{6}{F,\XYZ}$, is also connected.
\end{proof}
\vskip-3mm
\end{proof}

\subsection{Proof of Theorem ~\ref{14classes}}\label{proof}
We have shown in Subsection \ref{3trivial} connectedness of three components
$\QC7{(0,4,3,0)}$, $\QC7{(0,6,1,0)}$, and $\QC7{(0,3,3,1)}$ of pentagonal 7-configurations with $\s_1=0$.
In Subsection \ref{heptagonal-Qdeformations} we have shown connectedness of the component $\QC7{(7,0,0,0)}$ formed by heptagonal typical 7-configurations, and in Section \ref{hex} found seven connected components formed by hexagonal 7-configurations.
The remaining 3 cases of pentagonal configurations with $\s_1=1$ were analyzed in Subsections \ref{G-H} and \ref{F-polygons}.
\qed

\section{Concluding Remarks}\label{concluding}

\subsection{Real Schl\"afli double sixes of lines}
By blowing up $\mathbb{P}^2$ at the points of a typical 6-configuration $\P\subset \mathbb{P}^2$ we obtain a del Pezzo surface $X_\P$
of degree 3 that can be realized by anti-canonical embedding as a cubic surface in $\mathbb{P}^3$.
The exceptional curves of blowing up
form a configuration of six skew lines $\mathcal L_{\P}\subset X_\P\subset \mathbb{P}^3$ that
is nothing but a half of Schl\"afli's double six of lines, and we call below such $\mathcal L_{\P}$
the {\it skew six of lines} represented by $\P$.
 In the real setting, for $\P\subset\Rp2$, cubic surface $X_\P$ is real and {\it maximal}, where the latter means by definition that the real locus $\R X_\P\subset X_\P$ is homeomorphic to $\Rp2\#6\Rp2$.
 The four deformation classes of typical 6-configurations give
four types of real skew sixes of lines: {\it cyclic, bicomponent, tricomponent and icosahedral}.
It was observed in \cite{Z} that the complementary real skew six of lines
(that forms together with $\mathcal L_{\P}$ a real double six on $X_\P$) has the same type as a given one,
and so, we can speak of the {\it four types of real double sixes of lines}.

It was shown by V.\,Mazurovski (see \cite{DV})
that there exist 11 {\it coarse deformation classes} of
six skew line configurations in $\Rp3$: here {\it coarse} means that deformation equivalence is combined
with projective (possibly orientation-reversing) equivalence, for details see \cite{DV}. Among these 11 classes, 9 can be realized
by so called {\it join configurations}, $J_{\tau}$, that can be presented by permutations
$\tau\in S_6$ as follows. Fixing consecutive points $p_1,\dots, p_6$ and $q_1,\dots,q_6$ on a pair of auxiliary skew lines, $L^p$ and $L^q$ respectively,
we let $J_{\tau}=\{L_1,\dots,L_6\}$, where line $L_i$ joins $p_i$ with
$q_{\tau(i)}$, $i=1,\dots,6$. We denote such a configuration (and sometimes its coarse deformation class) by $J_\tau$.
 The remaining two coarse deformation classes among 11
 cannot be represented by join configurations $J_\tau$; these two classes are denoted in \cite{DV}
 by $L$ and $M$.
As it is shown in \cite{Z},
the cyclic, bicomponent, and tricomponent coarse deformation classes of real skew sixes $\Cal L_{\P}$ are
realized as $J_\tau$, where $\tau$ is respectively $(12\dots6)$, $(123654)$, and $(214365)$,
where $\tau$ is recorded as $(\tau(1)\dots\tau(6))$ (see Figure ~\ref{CAC6graph}).
The icosahedral coarse deformation class corresponds to the class $M$ from \cite{DV}.
\begin{figure}[h!]
\centering
{\scalebox{0.35}{\includegraphics{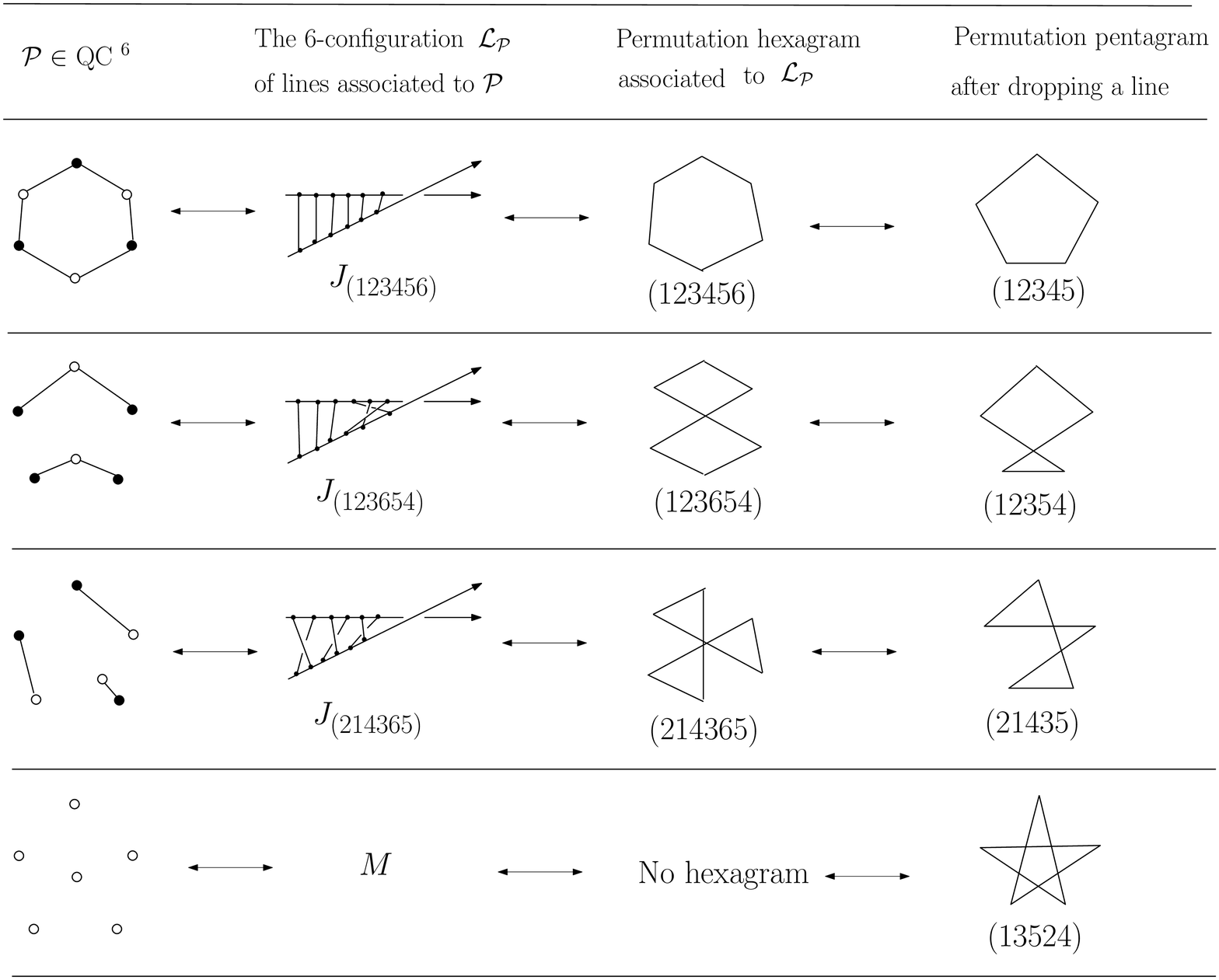}}}
\caption{Four classes of simple $6$-configurations, the corresponding real skew sixes of lines in $\Rp3$, with their permutation hexagrams and pentagrams}
\label{CAC6graph}
\end{figure}
\subsection{Permutation Hexagrams and Pentagrams}
A change of cyclic orderings of points $p_i$, $q_i$ on lines $L^p$ and $L^q$ clearly
does not change  the coarse deformation class of $J_\tau$.
 In the other words, the coarse deformation class of $J_\tau$ is an invariant of the orbit
$[\tau]\in S_6/(\DD_6\times\DD_6)$ of $\tau\in S_6$ with respect to the left-and-right multiplication action of $\DD_6\times\DD_6$ in $S_6$
for the dihedral subgroup $\DD_6\subset S_6$.

With a permutation  $\tau\in S_n$ we associate a diagram $D_\tau$ obtained by connecting
cyclically ordered vertices $v_1\dots v_n$ of a regular $n$-gon by diagonals $v_{\tau(i)}v_{\tau(i+1)}$, $i=1,\dots,n$,
(here, $\tau(n+1)=\tau(1)$).
 Then ``the shape of $D_\tau$'' characterizes class $[\tau]\in S_n/(\DD_n\times\DD_n)$,
 see Figure \ref{CAC6graph} for the
{\it hexagrams} representing the cyclic, bicomponent, and tricomponent permutation orbits $[\tau]$, namely, $[123456]$, $[123654]$ and $[214365]$.

By dropping a line from a real skew six $\Cal L$ we obtain {\it a real skew five}, $\Cal L'$, that can be realized
similarly, as a join configuration $J_\tau$ for $\tau\in S_5$. It was shown in \cite{Z} that the class $[\tau]\in S_5/(\DD_5\times\DD_5)$
does not depend on the line in $\Cal L$ that we dropped, including the case of icosahedral real double sixes,
see the corresponding  {\it pentagrams} $D_{[\tau]}$ on Figure \ref{CAC6graph}.

\subsection{Real Aronhold sets}
By blowing up the points of a typical 7-configuration, $\P\subset\Rp2$,
we obtain a non-singular real del Pezzo surface $X_\P$ of degree $2$
with a configuration $\Cal L_\P$ of $7$ disjoint real lines (the exceptional curves of blowing up).
The anti-canonical linear system maps $X_\P$ to a projective plane as a double covering branched along
a non-singular real quartic,
whose real locus has 4 connected components.
Each of the 7 lines of $\Cal L_{\P}$ is projected to a real bitangent to this quartic,
and the corresponding arrangement of 7 bitangents is called an {\it Aronhold set}.

The 14 Q-deformation classes of typical 7-configurations yield 14 types of real Aronhold sets,
which were described in \cite{Z}, see Appendix.

Among various known criteria to recognize that real bitangents $L_i$, $i=1,\dots,7$,
to a real quartic form an Aronhold set, topologically the most practical one is perhaps
possibility to color the two line segments between the tangency points on
each $L_i$ in two colors, so that at the intersection points $L_i\cap L_j$,
the corresponding line segments of $L_i$ and $L_j$ are colored differently.
Such colorings are indicated on the Figures in the Appendix.

\subsection{Real nodal cubics}
In \cite{S1}, Fiedler-Le-Touz\'e
analyzed real nodal cubics, $C_i$, passing through the points $p_0,\dots,p_6\in\P$
of a heptagonal configuration,
$\P\in\QC7{7,0,0,0}$, and having a node at one of the points $p_i\in\P$, and described in which order the points of $\P$
may follow on the real locus of $C_i$ (see Figure \ref{nodalcubic}).

\begin{figure}[h!]
 \centering
    \subfigure[$C_0$]{\scalebox{0.25}{\includegraphics{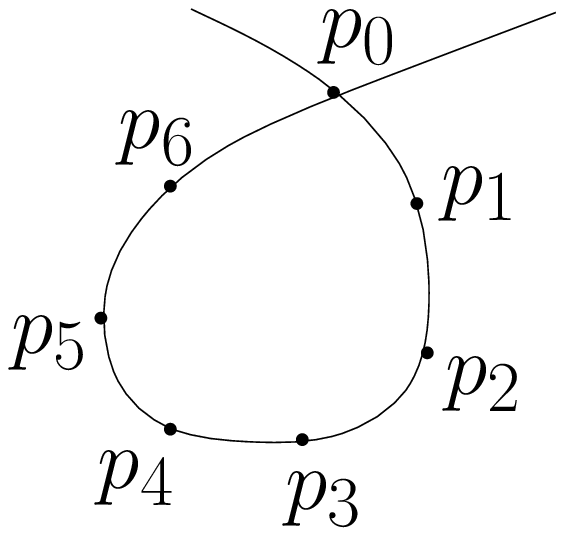}}}\hspace{0.2cm}
    \subfigure[$C_1$]{\scalebox{0.25}{\includegraphics{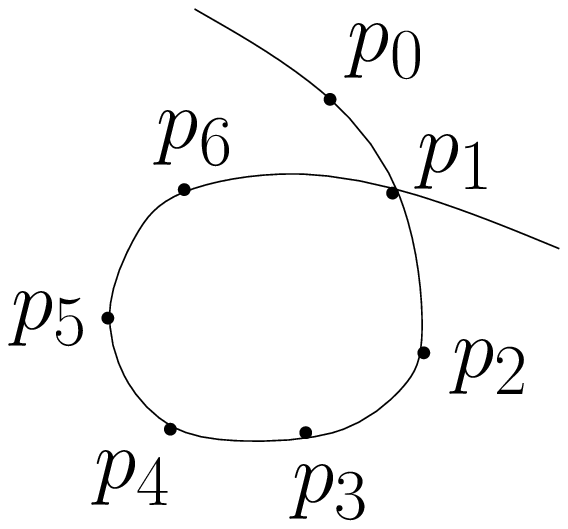}}}\hspace{0.2cm}
    \subfigure[$C_2$]{\scalebox{0.25}{\includegraphics{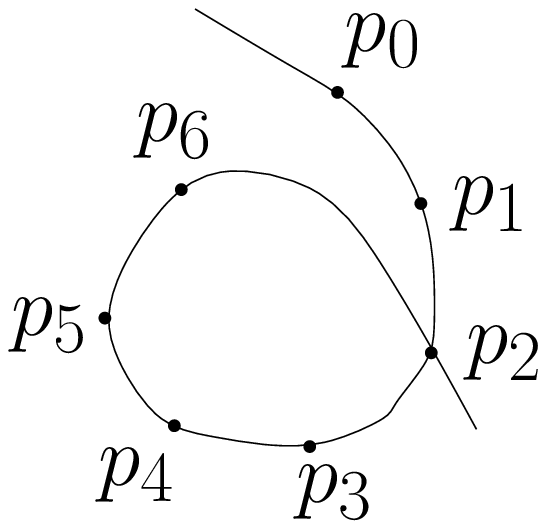}}}\hspace{0.2cm}
    \subfigure[$C_3$]{\scalebox{0.25}{\includegraphics{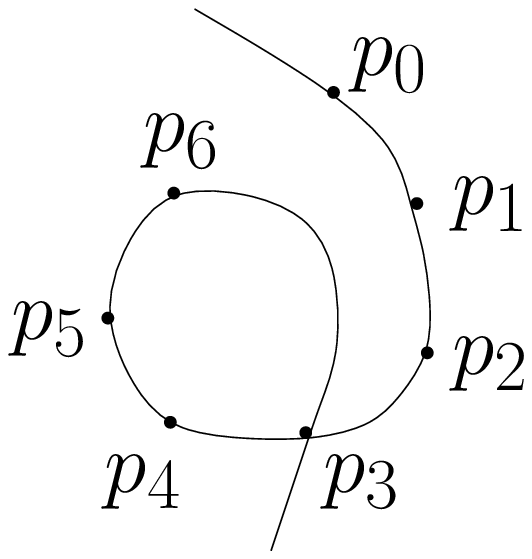}}}\hspace{0.2cm}
    \subfigure[$C_4$]{\scalebox{0.25}{\includegraphics{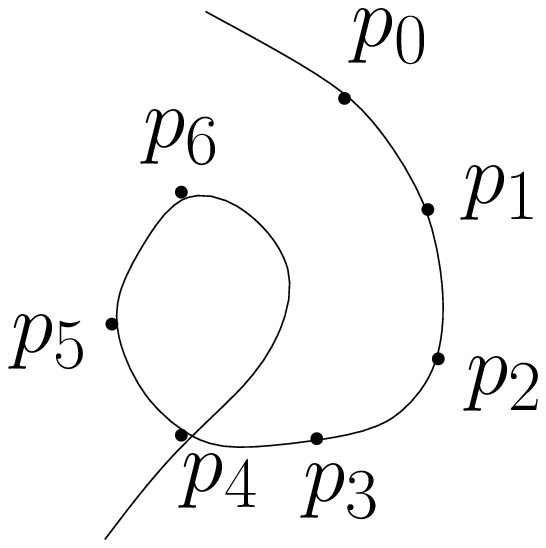}}}\hspace{0.2cm}
    \subfigure[$C_5$]{\scalebox{0.25}{\includegraphics{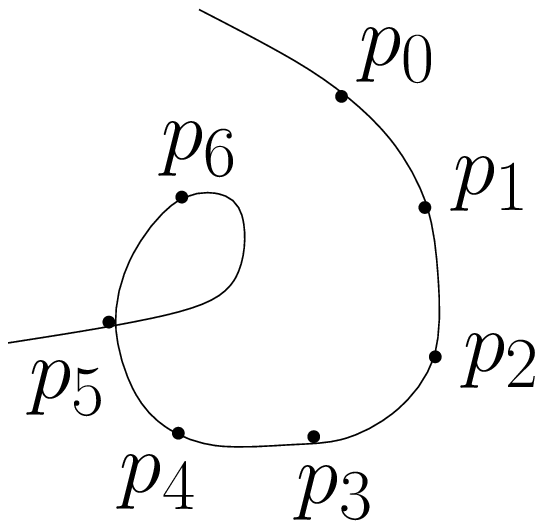}}}\hspace{0.2cm}
    \subfigure[$C_6$]{\scalebox{0.3}{\includegraphics{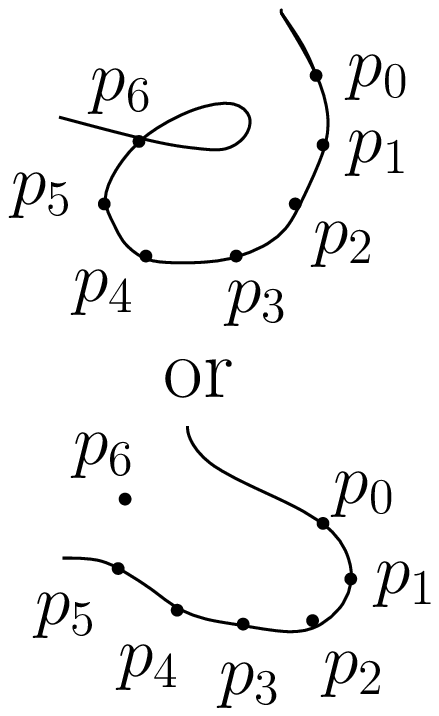}}}
  \caption{Cubics $C_i$, $i=0,\dots,6$, passing through canonically ordered points
  $p_0,\dots,p_6$ of $\P\in\QC7{(7,0,0,0)}$ and having a node at $p_i\in\P$
 }\label{nodalcubic}
\end{figure}
Recently, a similar analysis was done for the other types of 7-configurations,
see \cite{S3}.
We proposed an alternative approach based on the real Aronhold set,
$\Cal L=\{L_0,\dots,L_6\}$, corresponding to a given typical 7-configuration
$\P$. Namely, the order in which cubic $C_i$ passes through the points $p_j$ is the order in which bitangent $L_i$ intersects other bitangents $L_j$. The two branches of $C_i$ at the node correspond to the two tangency points of $L_i$.

\begin{remark}
Possibility of two shapes of cubic $C_6$ shown on Figure \ref{nodalcubic} correspond to possibility to deform a real quartic with 4 ovals, so that
bitangent $L_6$ moves away from an oval,  as it is shown in the Appendix
on the top Figure:
the two tangency points to $L_6$ on that
oval are deformed into two imaginary (complex conjugate) tangency points.
 Similarly, one can shift double bitangents to the same ovals in the other
 of real Aronhold sets shown in the Appendix.
\end{remark}
\begin{remark}
 The two loops (finite and infinite) of a real nodal cubic $C_i$ that correspond to the two line segments on $L_i$ bounded by the tangency points can be distinguished by the following parity rule.
Line $L_i$ contains six points of intersection with $L_j$, $0\le j\le6$, $j\ne i$, and one more intersection
point, with a line  $L_i'$ obtained by shifting $L_i$ away from the real locus of the quartic.
One of the two line segments contains even number of intersection points, and it
 corresponds to the ``finite'' loop of $C_i$, and the other line segment represents the ``infinite'' loop
of $C_i$.
\end{remark}

\subsection{Method of Cremona transformations} An elementary real Cremona transformation, $\Cr ijk :\Rp2\to\Rp2$,
based at a triple of points $\{p_i,p_j,p_k\}\subset\P$ transforms a typical 7-configuration $\P=\{p_0,\dots,p_6\}$ to another
typical 7-configuration $\P_{ijk}=\Cr{i}jk(\P)$.
Starting with a configuration $\P\in\QC7{(7,0,0,0)}$,
we can realize the other 13 Q-deformation classes of 7-configurations as $\P_{ijk}$ for a suitable choice of $i,j,k$, as it is
shown on Figure \ref{Cremona}, see \cite{Z} for more details.
This construction is used to produce the real Aronhold sets shown in the Appendix.

\begin{figure}[h]
\begin{center}
{\scalebox{0.6}{\includegraphics{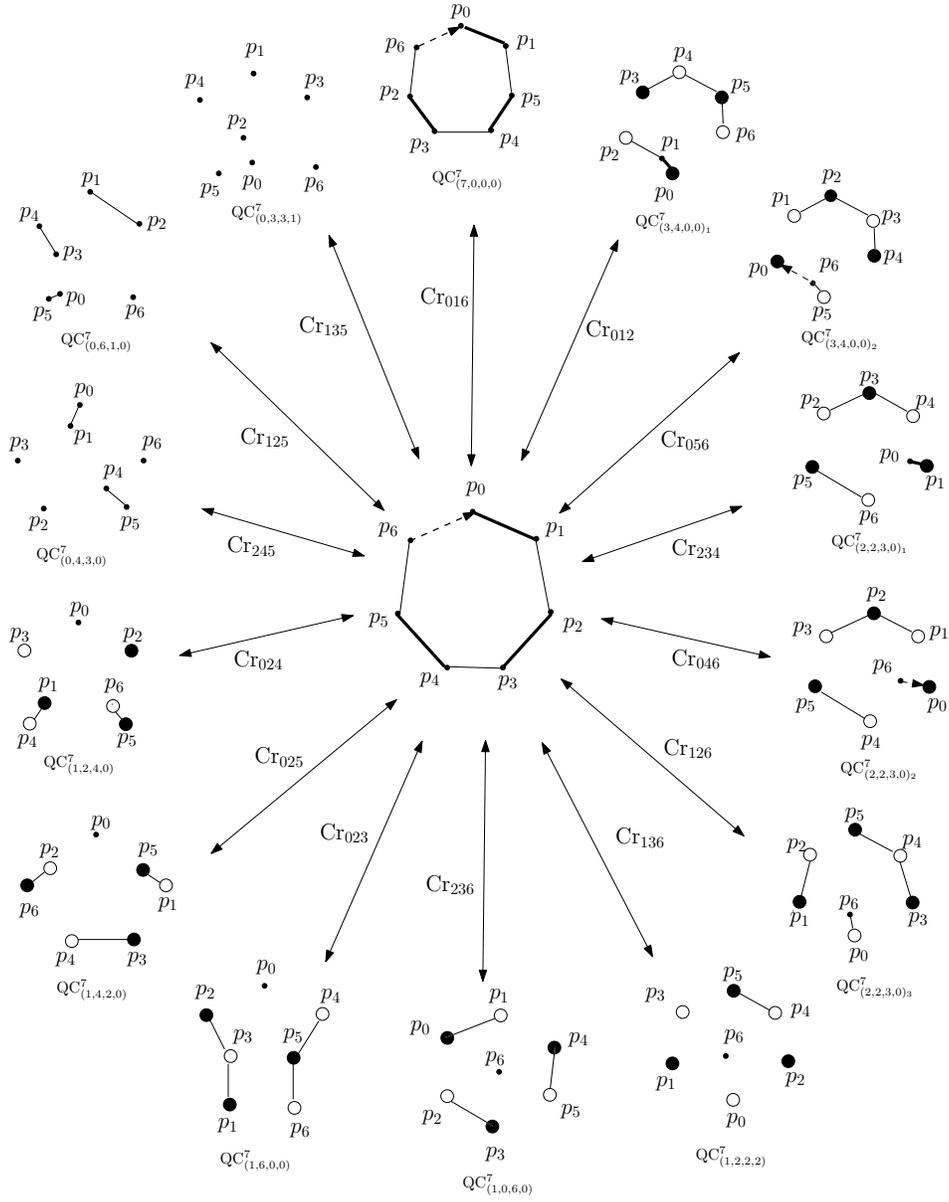}}}
\end{center}
\caption{Cremona transformations of $\mathcal{P}\in QC^{7}_{(7,0,0,0)}$}
\label{Cremona}
\end{figure}

\clearpage
\section*{Appendix. Real Aronhold sets.}
The 14 Figures below show real Aronhold sets representing typical planar $7$-configurations.
In the case of $\QC7{(7,0,0,0)}$ on the top Figure we have shown a possible variation
of one of the bitangents that
has two contacts to the same oval: it can be shifted from this oval after a deformation of the quartic,
so that the contact points become imaginary.
Similar variations are possible in the other 9 cases (except
$\QC7{(3,4,0,0)_2}$, $\QC7{(2,2,3,0)_2}$, $\QC7{(2,2,3,0)_3}$, and $\QC7{(1,2,2,2)}$).

\begin{figure}[h!]
\centering
\subfigure[]{\scalebox{0.4}{\includegraphics{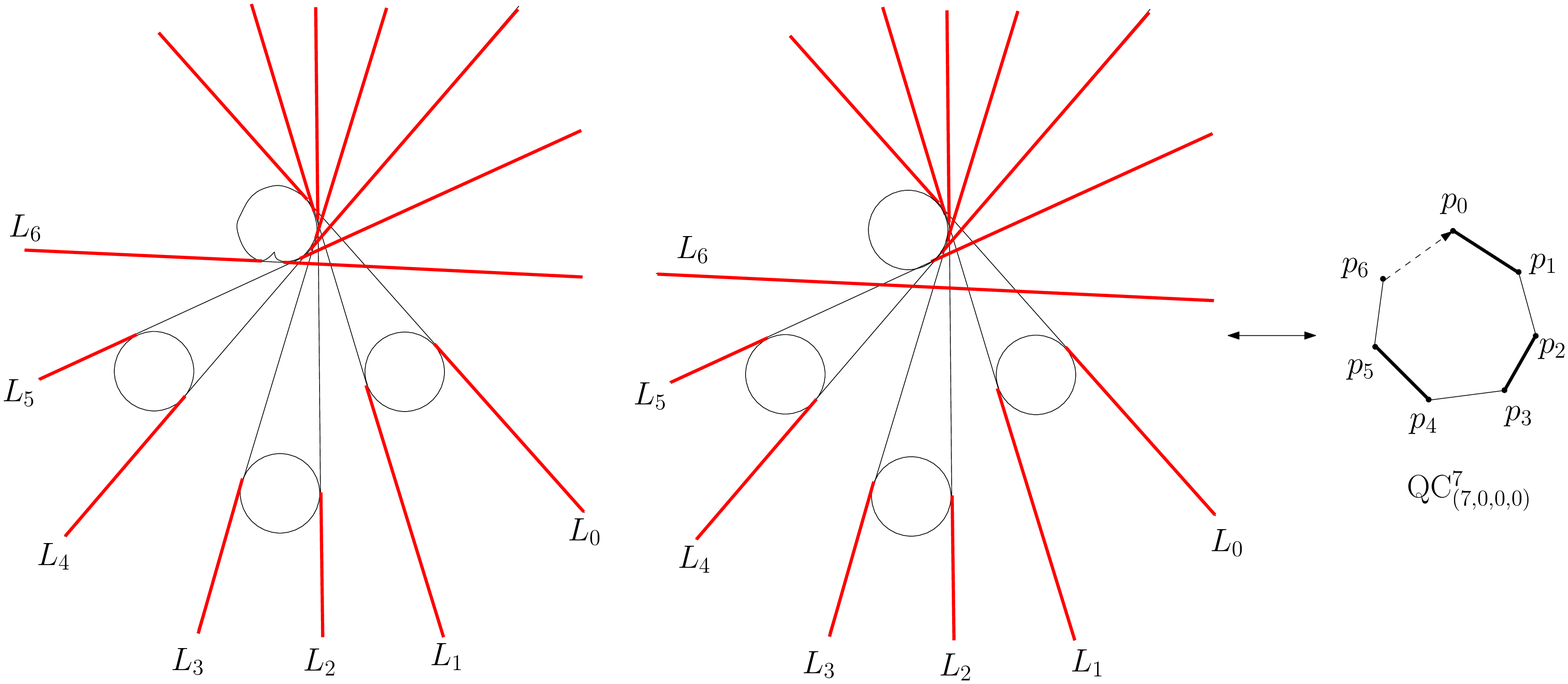}}}\\
\subfigure[$\Cr012$]{\scalebox{0.42}{\includegraphics{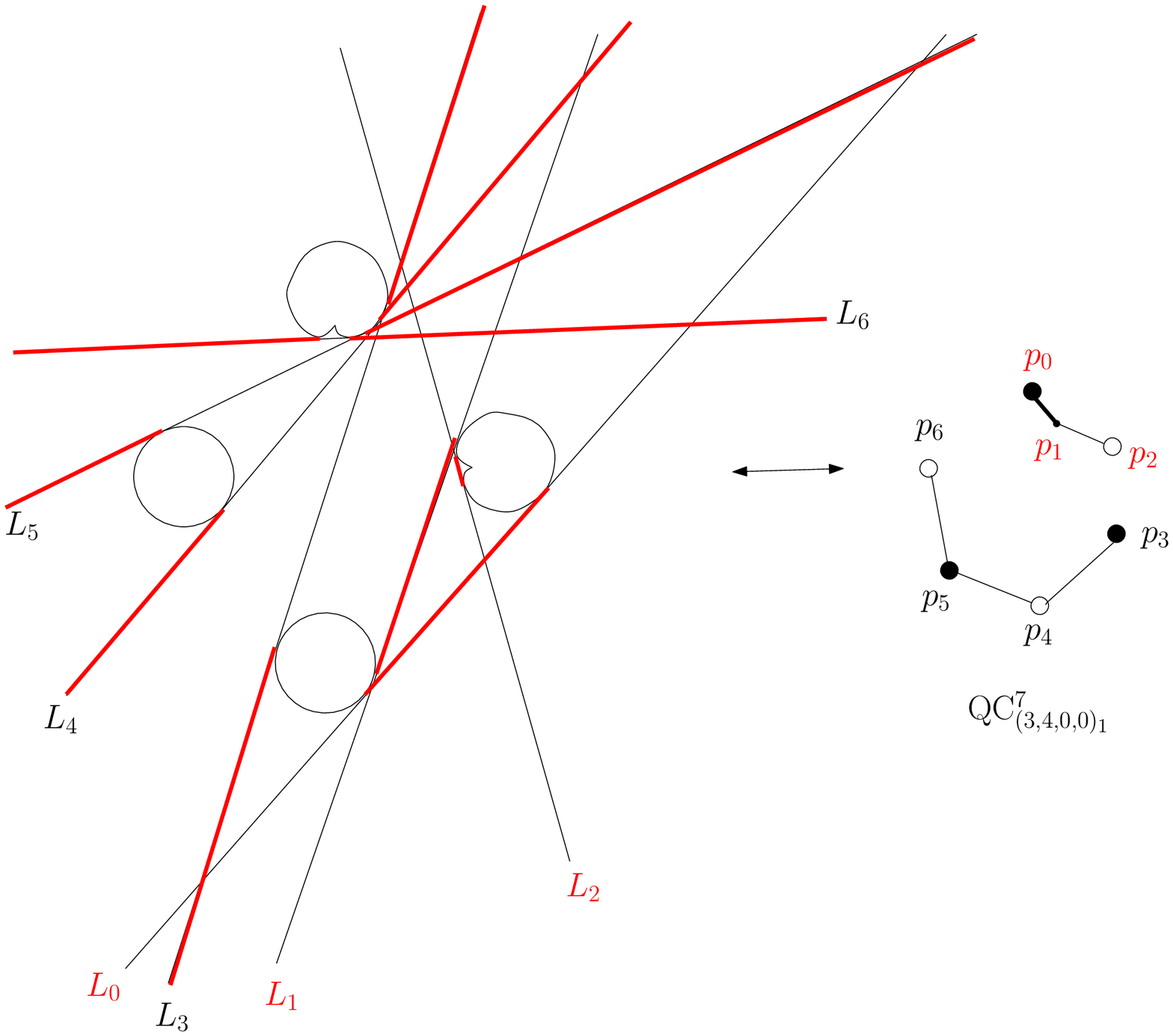}}}
\end{figure}

\begin{figure}[h!]
\ContinuedFloat
\centering
\subfigure[$\Cr056$]{\scalebox{0.58}{\includegraphics{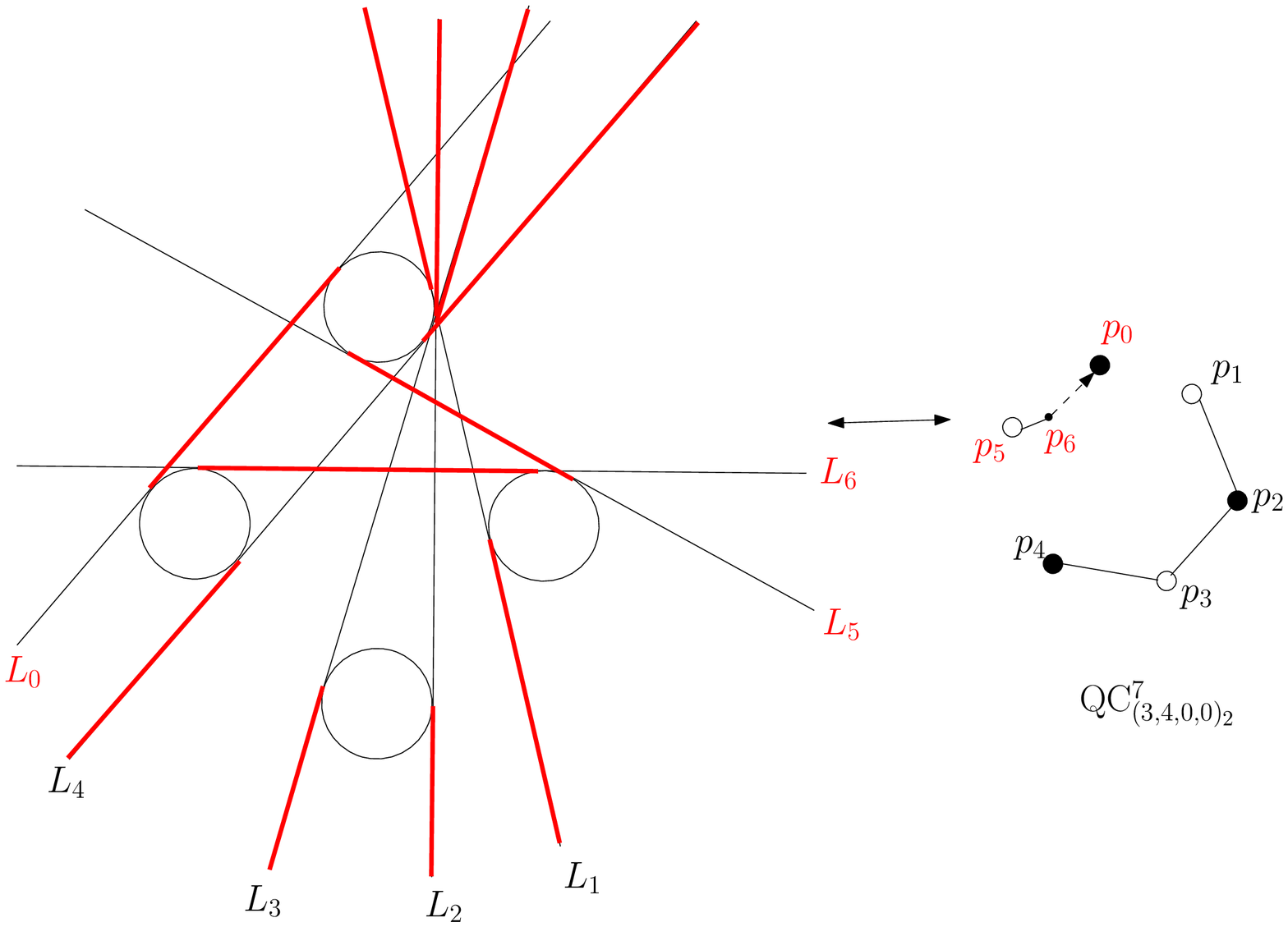}}}\\
\vspace{0.7cm}
\subfigure[$\Cr234$]{\scalebox{0.55}{\includegraphics{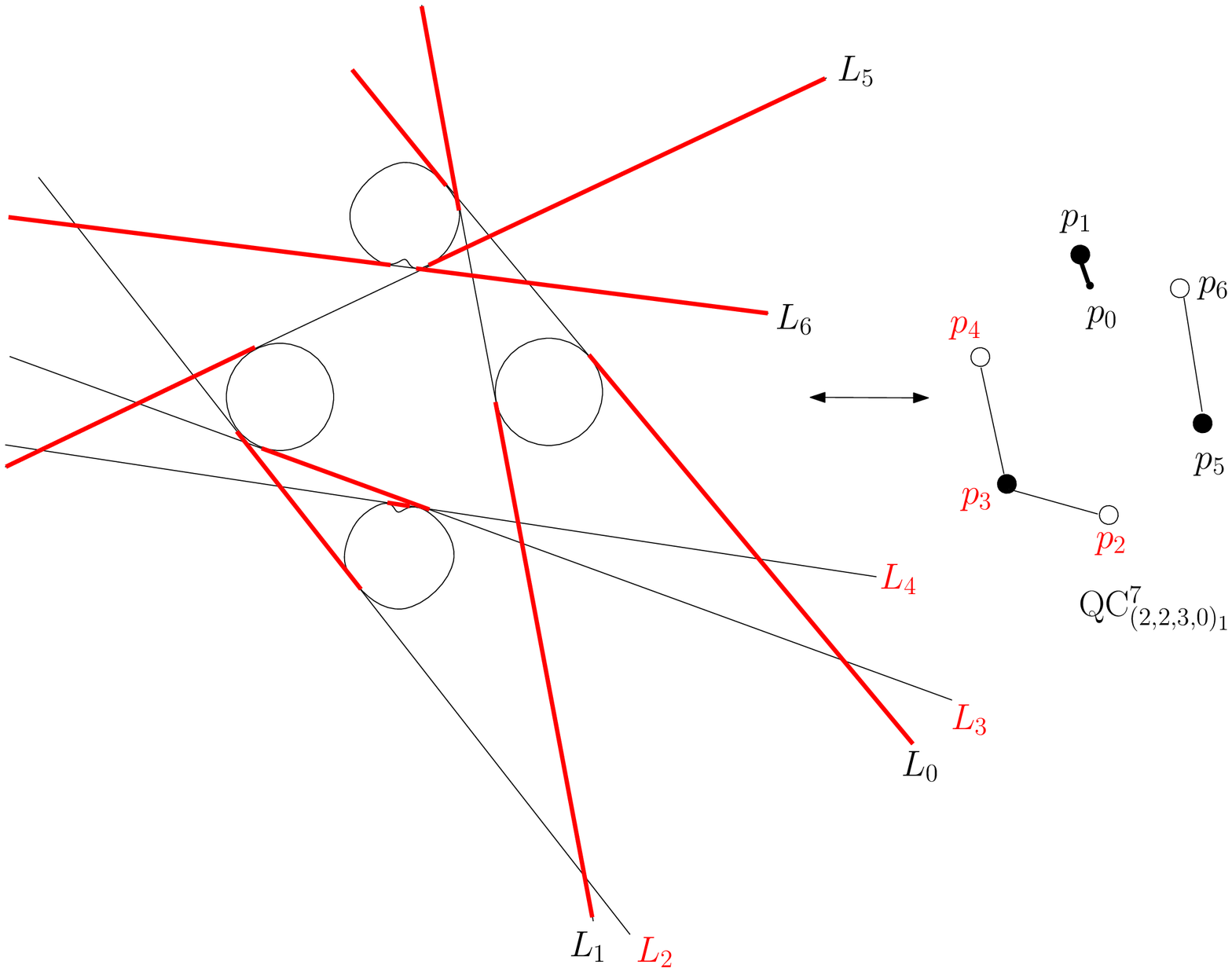}}}
\end{figure}

\begin{figure}[h!]
\ContinuedFloat
\centering
\subfigure[$\Cr046$]{\scalebox{0.6}{\includegraphics{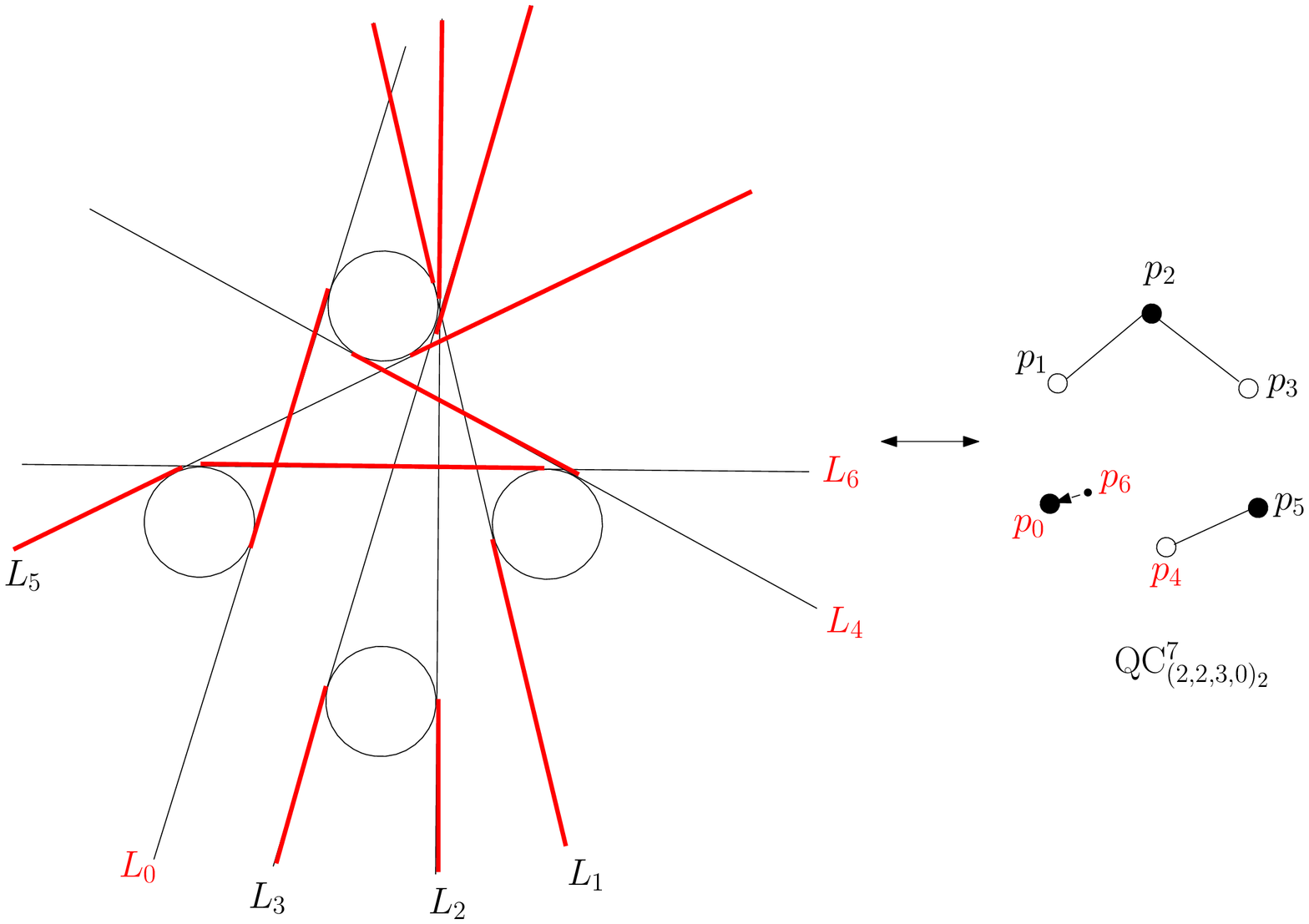}}}\\
\vspace{0.7cm}
\subfigure[$\Cr126$]{\scalebox{0.55}{\includegraphics{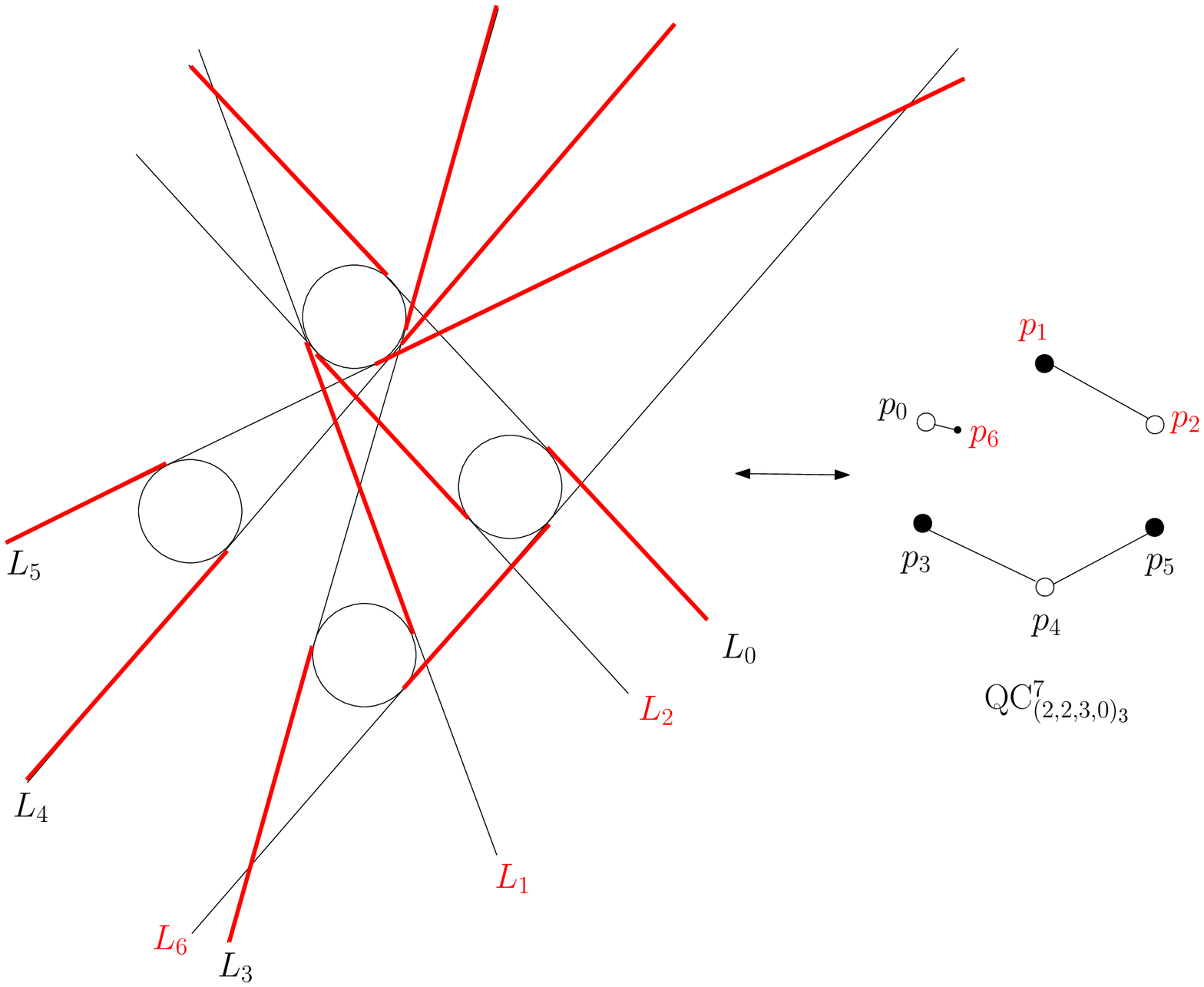}}}
\end{figure}

\begin{figure}[h!]
\ContinuedFloat
\centering
\subfigure[$\Cr136$]{\scalebox{0.55}{\includegraphics{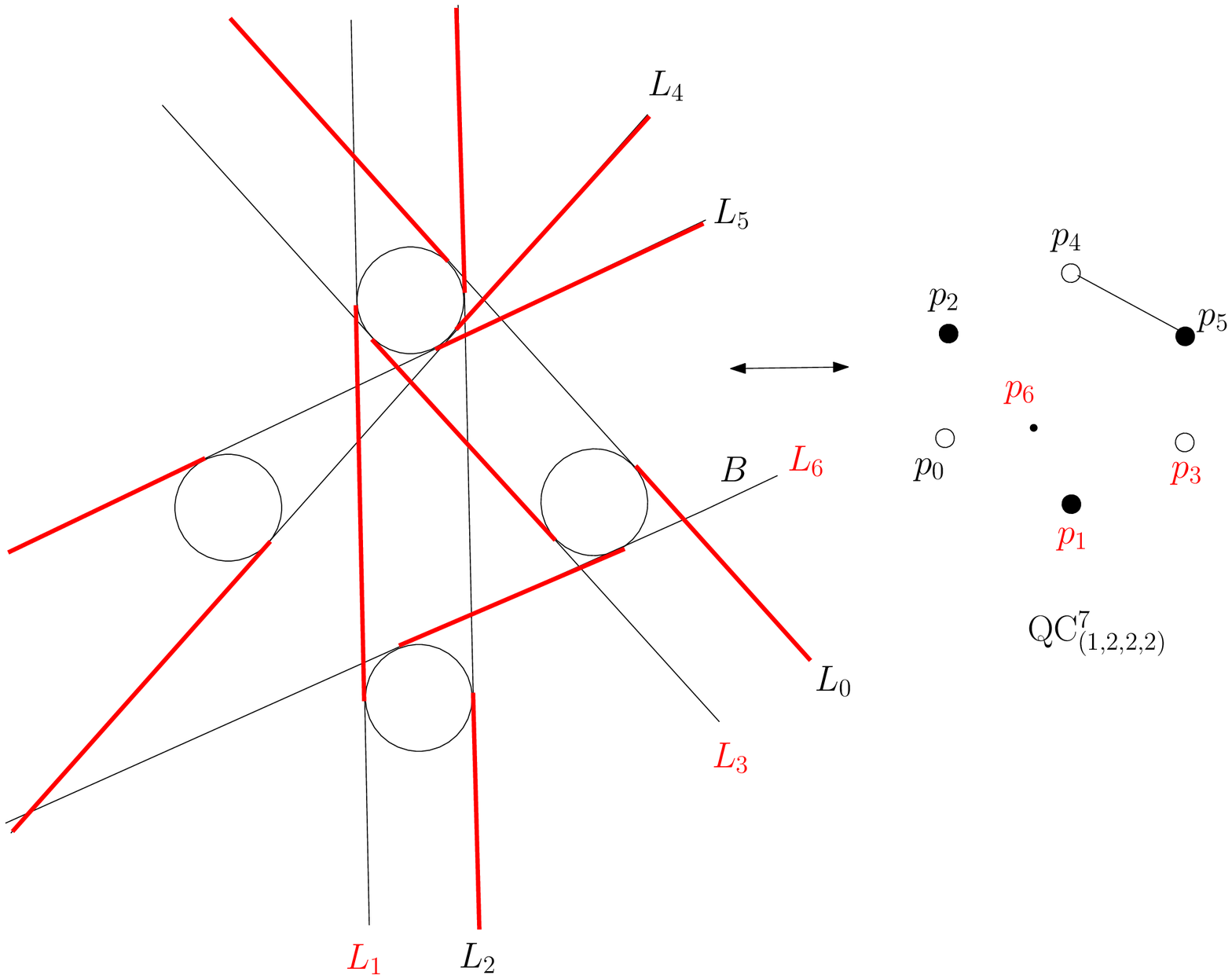}}}\\
\vspace{0.7cm}
\subfigure[$\Cr236$]{\scalebox{0.58}{\includegraphics{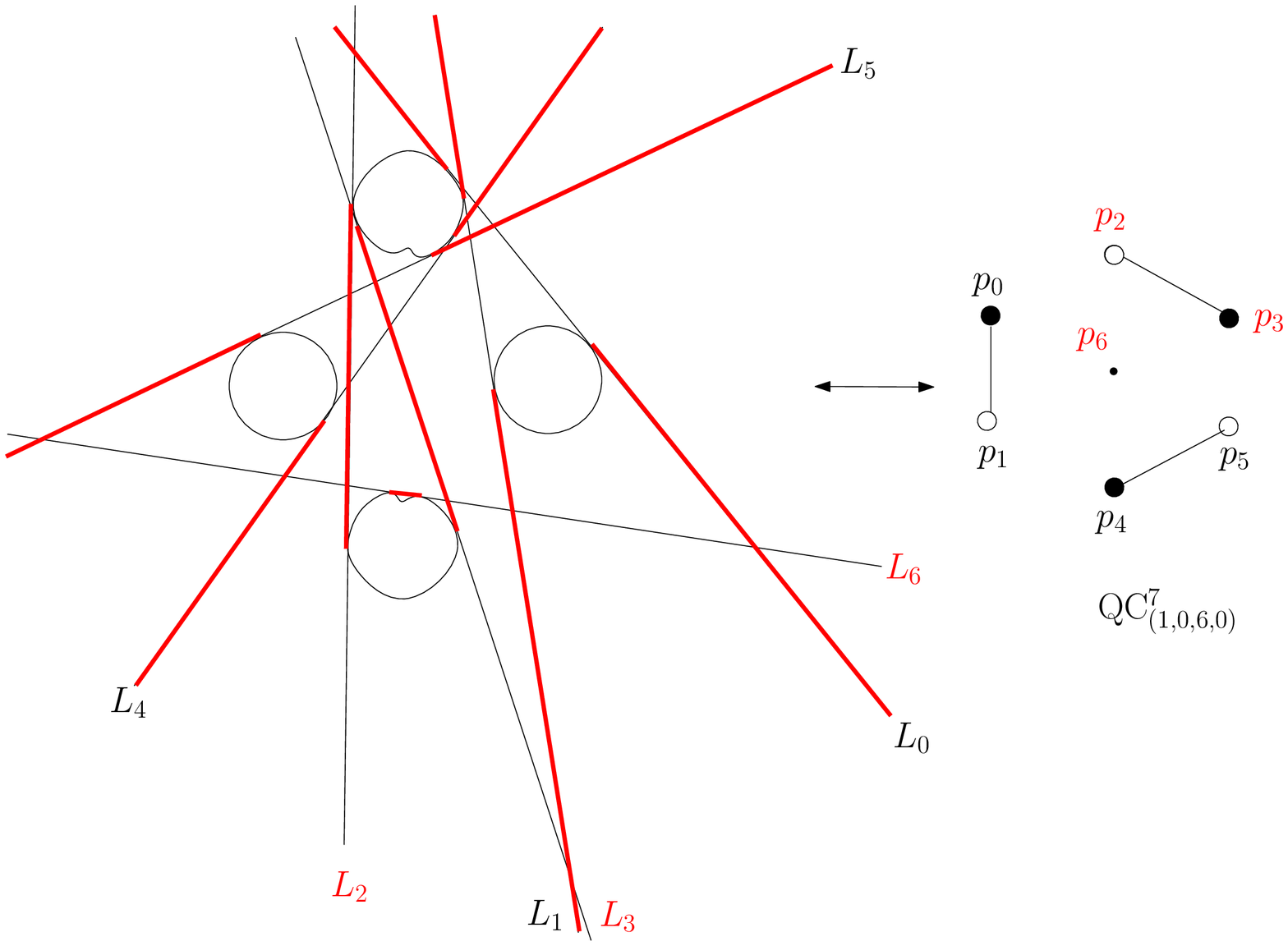}}}
\end{figure}

\begin{figure}[h!]
\ContinuedFloat
\centering
\subfigure[$\Cr023$]{\scalebox{0.58}{\includegraphics{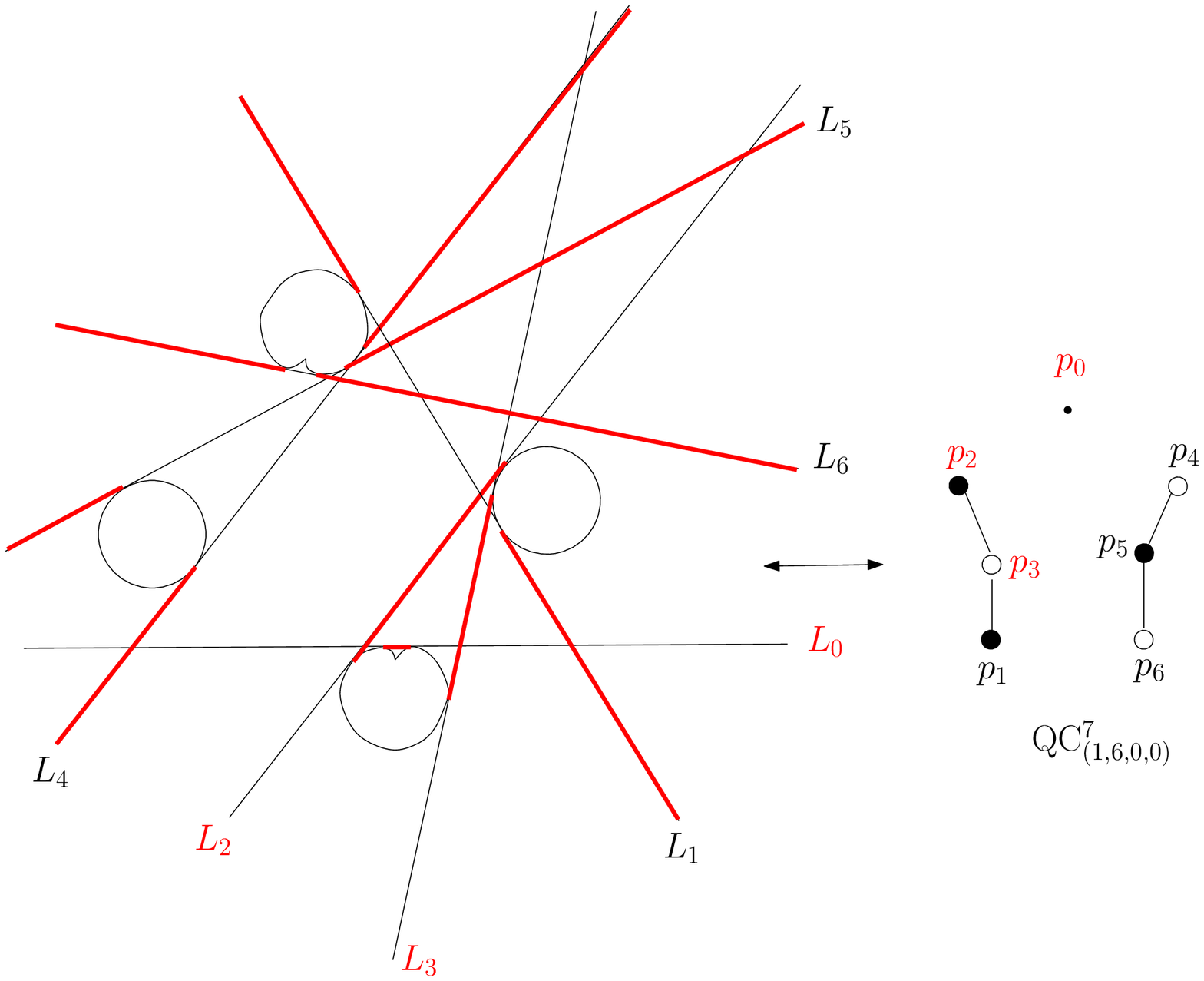}}}\\
\vspace{0.7cm}
\subfigure[$\Cr025$]{\scalebox{0.55}{\includegraphics{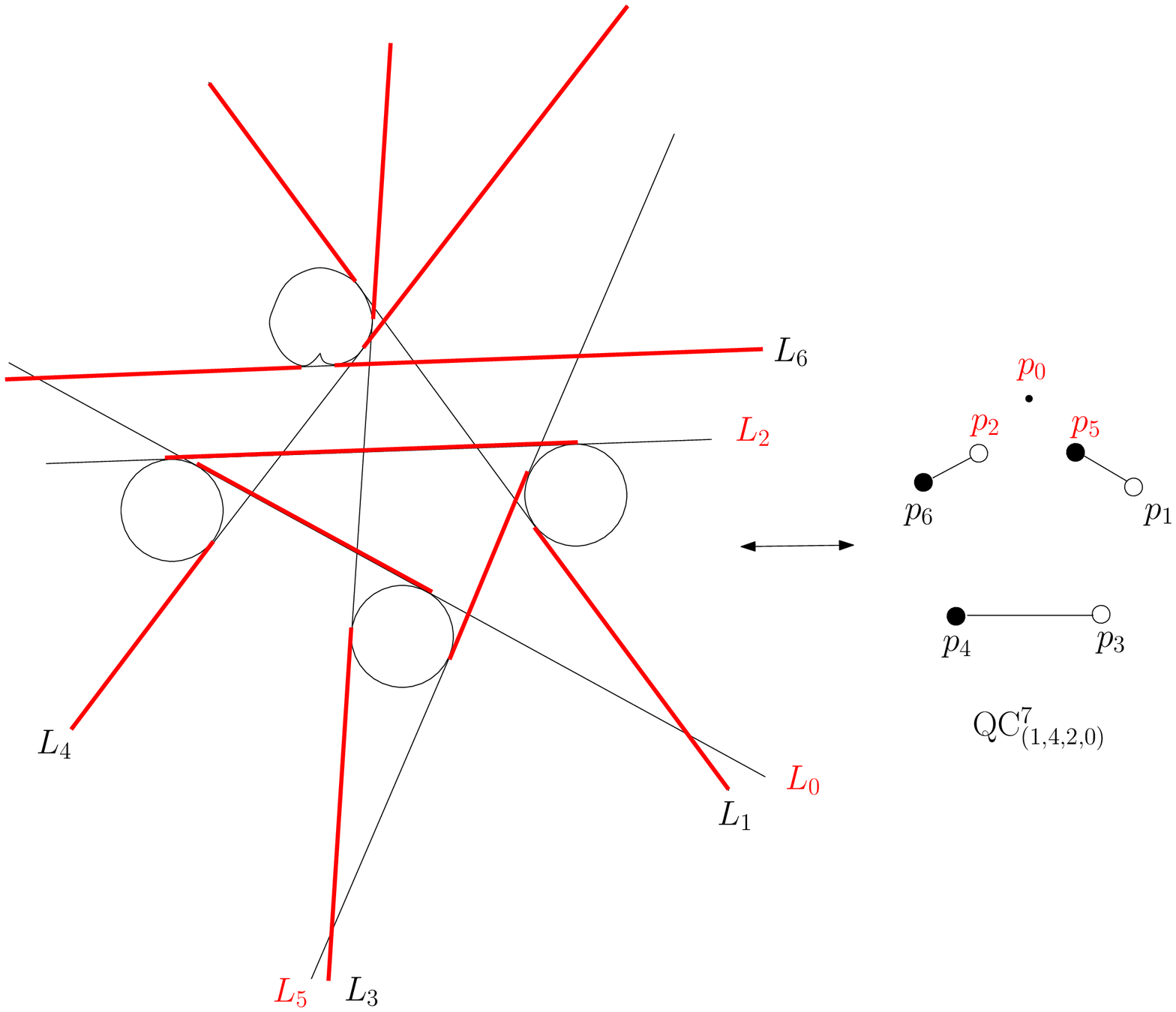}}}
\end{figure}

\begin{figure}[h!]
\ContinuedFloat
\centering
\subfigure[$\Cr024$]{\scalebox{0.55}{\includegraphics{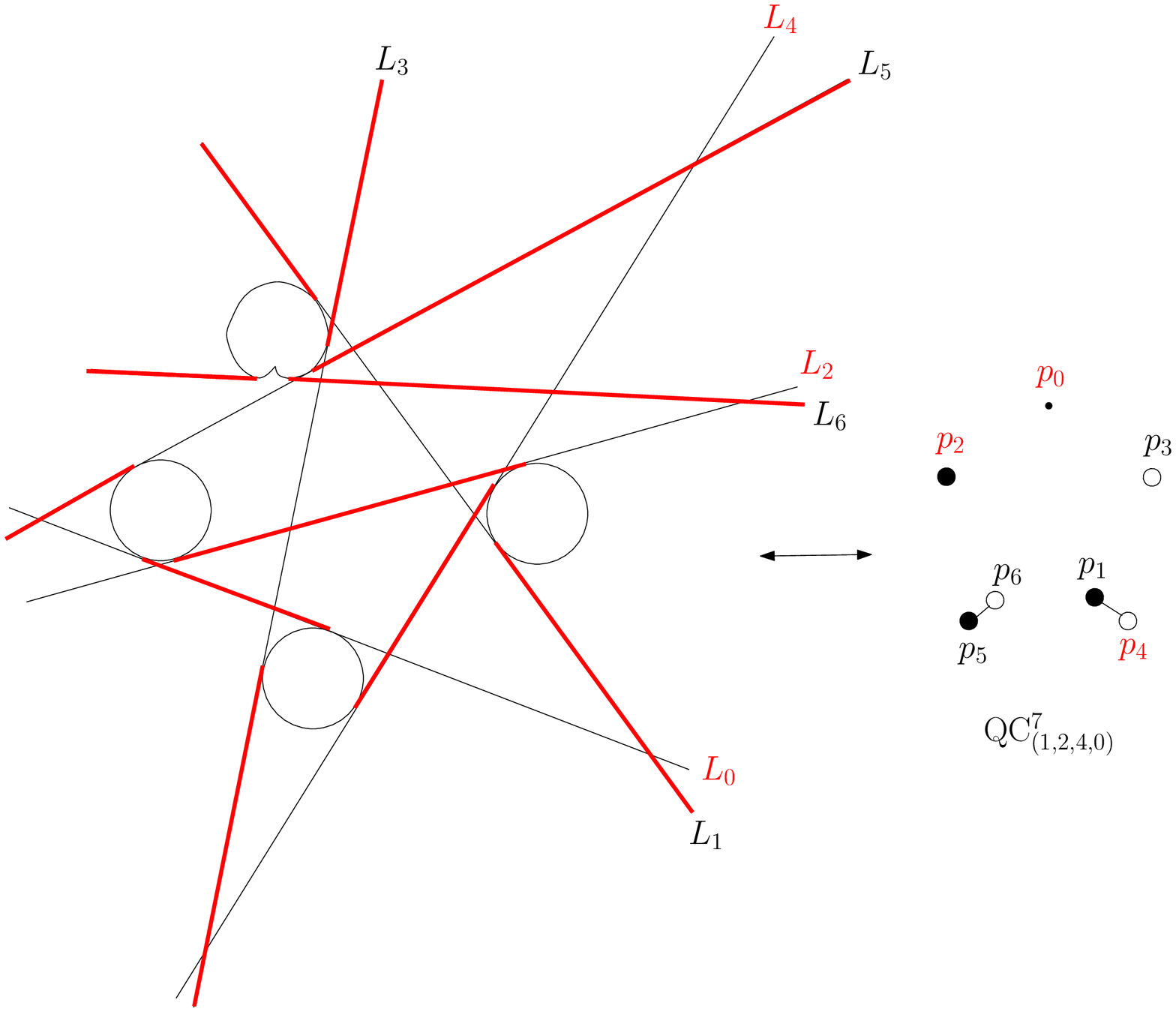}}}\\
\vspace{0.7cm}
\subfigure[$\Cr245$]{\scalebox{0.55}{\includegraphics{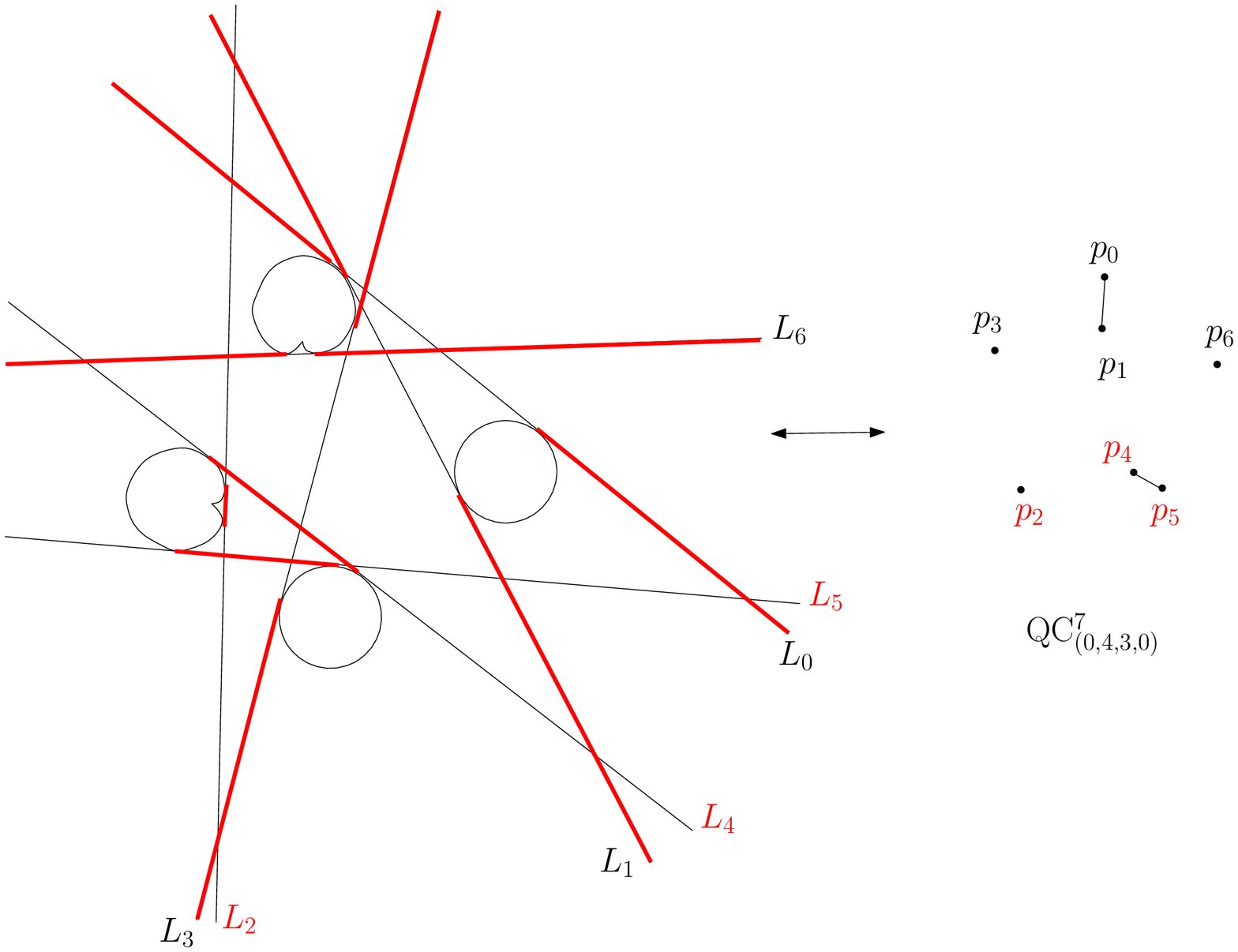}}}
\end{figure}

\begin{figure}[h!]
\ContinuedFloat
\centering
\subfigure[$\Cr125$]{\scalebox{0.55}{\includegraphics{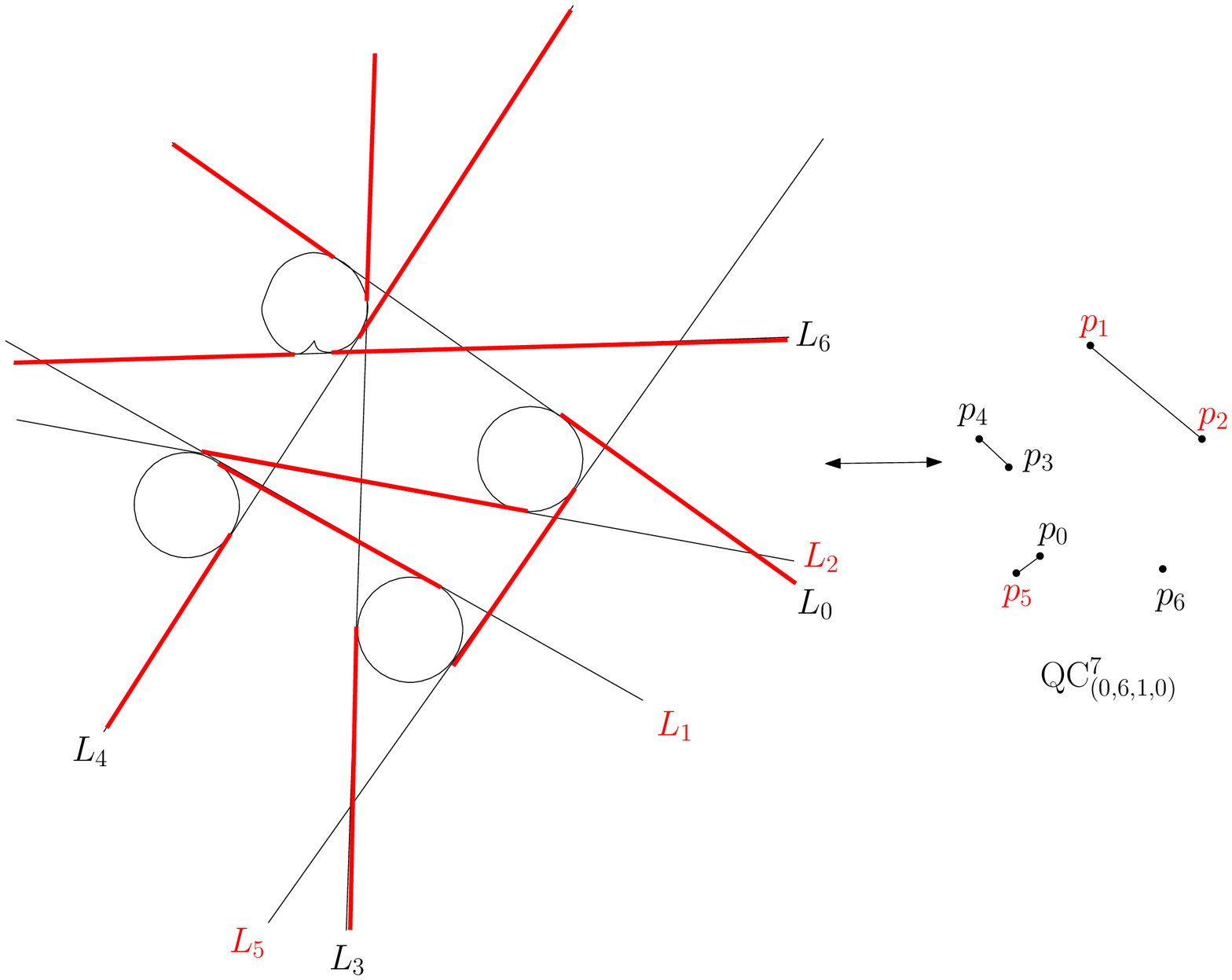}}}\\
\vspace{0.7cm}
\subfigure[$\Cr135$]{\scalebox{0.58}{\includegraphics{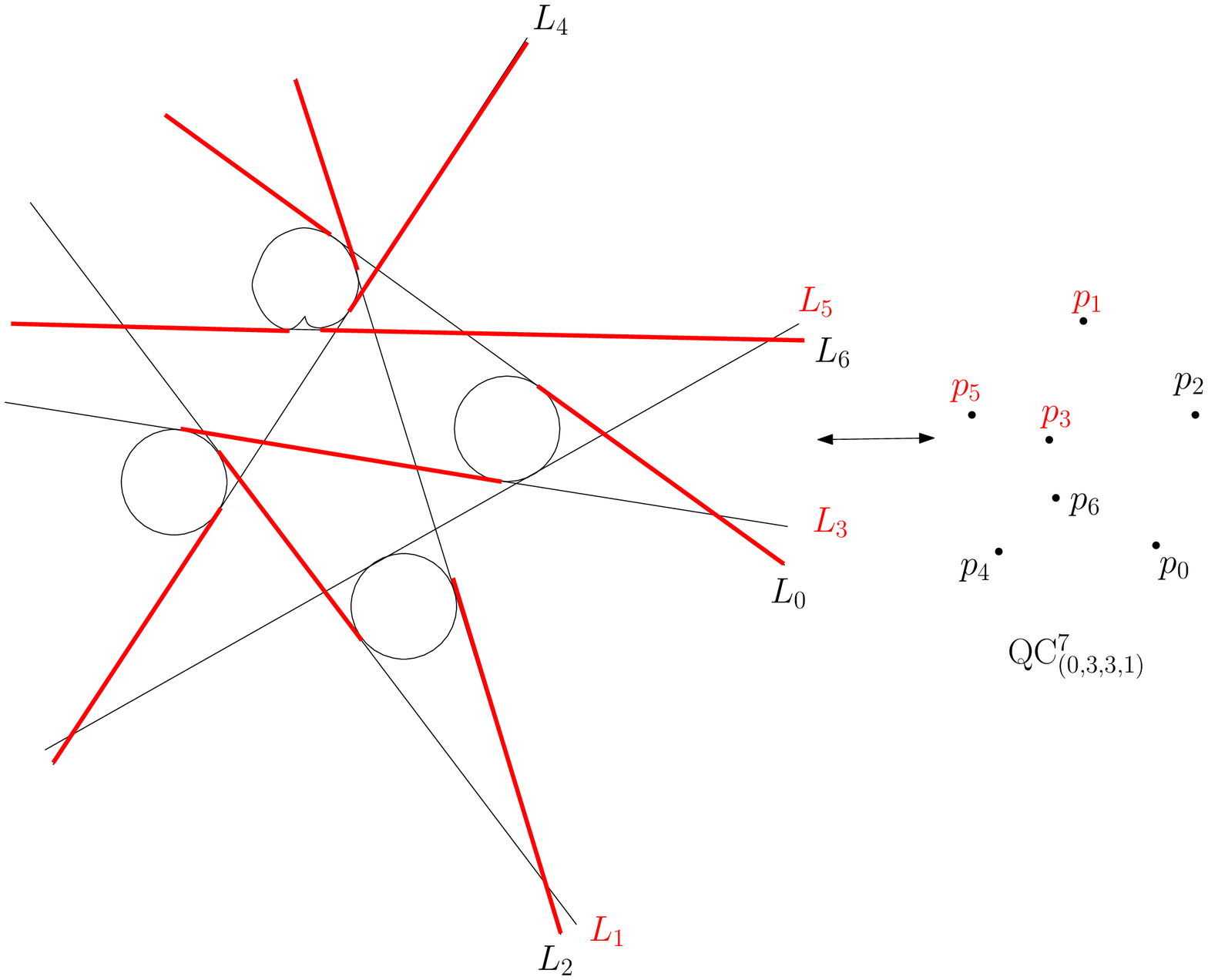}}}
\caption{}
\label{CAC7graph}
\end{figure}

\clearpage

\end{document}